\documentclass[12pt,a4paper]{article}
%%%%%%%%%%%%%%%%%%%%%%%%%%%%%%%%%%%%%%%%%%%%%%%%%%%%%%%%%%%%%%%%%%%%%%%%%%%%%%%%%%%%%%%%%%%%%%%%%%%%%%%%%%%%%%%%%%%%%%%%%%%%%%%%%%%%%%%%%%%%%%%%%%%%%%%%%%%%%%%%%%%%%%%%%%%%%%%%%%%%%%%%%%%%%%%%%%%%%%%%%%%%%%%%%%%%%%%%%%%%%%%%%%%%%%%%%%%%%%%%%%%%%%%%%%%%
\usepackage{amssymb}
\usepackage{amsfonts}
\usepackage{amsmath}
\usepackage{graphicx}

\setcounter{MaxMatrixCols}{10}

\newtheorem{theorem}{Theorem}[section]

\newtheorem{condition}{Condition}

\newtheorem{corollary}[theorem]{Corollary}

\newtheorem{definition}{Definition}[section]

\newtheorem{lemma}[theorem]{Lemma}
\newtheorem{notation}{Notation}

\newtheorem{proposition}[theorem]{Proposition}
\newtheorem{remark}{Remark}[section]

\newenvironment{proof}[1][Proof]{\textbf{#1.} }{\ \rule{0.5em}{0.5em}}

\begin{document}

\title{Random walks on graphs with volume and time doubling\thanks{%
Running head: Random walks on graphs}}
\author{Andr\'{a}s Telcs \\
%EndAName
{\small Department of Computer Science and Information Theory, }\\
{\small Budapest} {\small University of Technology and Economics}\\
{\small telcs@szit.bme.hu}}
\maketitle

\begin{abstract}
This paper studies the on- and off-diagonal upper estimate and the two-sided
transition probability estimate of random walks on weighted graphs.

\ \ \ \ \ MSC2000 60J10, 60J45, 35B05

\ \ \ \ \ Keywords: random walk, time doubling, parabolic mean value
inequality
\end{abstract}

\tableofcontents

\section{Introduction}

\setcounter{equation}{0}\label{sintr}

Let us consider a countable infinite connected graph $\Gamma $. \ A weight
function $\mu _{x,y}=\mu _{y,x}>0$ is given on the edges $x\sim y.$ This
weight induces a measure $\mu (x)$%
\begin{equation*}
\mu (x)=\sum_{y\sim x}\mu _{x,y},\text{ }\mu (A)=\sum_{y\in A}\mu (y)
\end{equation*}%
on the vertex set $A\subset \Gamma $ and defines a reversible Markov chain $%
X_{n}\in \Gamma $, i.e. a random walk on the weighted graph $(\Gamma ,\mu )$
with transition probabilities
\begin{align*}
P(x,y)& =\frac{\mu _{x,y}}{\mu (x)}, \\
P_{n}(x,y)& =\mathbb{P}(X_{n}=y|X_{0}=x).
\end{align*}%
For a set $A\subset \Gamma $ the killed random walk is defined by the
transition operator restricted to $c_{0}\left( A\right) ,$ and the
corresponding transition probability is denoted by $P_{n}^{A}\left(
x,y\right) $

The graph is equipped with the usual (shortest path length) graph distance $%
d(x,y)$ and open metric balls are defined for $x\in \Gamma ,$ $R>0$ as $%
B(x,R)=\{y\in \Gamma :d(x,y)<R\}$ and its $\mu -$measure is denoted by
\begin{equation}
V(x,R)=\mu \left( B\left( x,R\right) \right) .  \label{vdef}
\end{equation}%
If $\Gamma =%
%TCIMACRO{\U{2124} }%
%BeginExpansion
\mathbb{Z}
%EndExpansion
^{d}$ and $\mu _{x,y}=1$ if $d\left( x,y\right) =1$ we get back the
classical, simple symmetric nearest neighbor random walk on $%
%TCIMACRO{\U{2124} }%
%BeginExpansion
\mathbb{Z}
%EndExpansion
^{d}$. This random walk serves as a discrete approximation and model for the
diffusion in continuous space and time. It is widely accepted that the
interesting phenomena and results found on continuous space and time have
their random walks counterparts ( and vice versa) (c.f. just as example \cite%
{KS},\cite{BBK} of the link between the two frameworks). \ The first
rigorously studied fractal type graph was the Sierpinski triangle.   On this graph the volume growth is
polynomial%
\begin{equation*}
V\left( x,R\right) \simeq R^{\alpha }
\end{equation*}%
with exponent $\alpha =\frac{\log 3}{\log 2}$. Here $\simeq $ means that the
ratio of the two functions of $r$ is uniformly separated from zero and
infinity. \ On this infinite graph the transition probability estimate has
the form ( c.f. \cite{J})
\begin{eqnarray*}
p_{n}\left( x,y\right) &\leq &\frac{C}{n^{\alpha /\beta }}\exp \left(
-c\left( \frac{d^{\beta }\left( x,y\right) }{n}\right) ^{\frac{1}{\beta -1}%
}\right) , \\
p_{n}\left( x,y\right) +p_{n+1}\left( x,y\right) &\geq &\frac{c}{n^{\alpha
/\beta }}\exp \left( -C\left( \frac{d^{\beta }\left( x,y\right) }{n}\right)
^{\frac{1}{\beta -1}}\right) ,
\end{eqnarray*}%
where $C,c>0,$ and the walk dimension is $\beta =\frac{\log 5}{\log 2}>2.$
This walk moves slower than the classical one due to the big holes and
narrow connections. \ This is reflected in the exponent $\beta >2$.\ In the
classical $%
%TCIMACRO{\U{2124} }%
%BeginExpansion
\mathbb{Z}
%EndExpansion
^{d}$ case the mean exit time $E\left( x,R\right) \simeq R^{2},$ which is
the expected value of the time needed by the walk to leave the ball $B\left(
x,R\right) .$ For the Sierpinski graph it is $E\left( x,R\right) \simeq
R^{\beta }$ with $\beta =\frac{\log 5}{\log 2}>2.$ Many efforts have been
devoted to the investigation of other particular fractals and general
understanding what kind of structural properties are responsible for the
leading and exponential term of the upper and lower estimate (for further
background and literature please see \cite{B}, \cite{HK}). The next
challenge was to obtain such kind of "heat kernel" estimates \ on a larger
class of graphs and drop the Alforth regularity: $V\left( x,R\right) \simeq
R^{\alpha }$. \ An easy example for such a graph is described in \cite{GT2}.
\ The Vicsek tree is considered, which is built in a recursive way. If the
weights assigned to the edges are slightly increasing by the distance to the
root, the resulted weighted graph is not Alforth regular any more, ( \ref{wvicsek}) but satisfies the volume doubling condition (see Definition \ref%
{dvd}.) In \cite{GT2} the authors gave necessary and sufficient conditions
for two-sided sub-Gaussian estimates of the following form,%
\begin{eqnarray}
p_{n}\left( x,y\right) &\leq &\frac{C}{V\left( x,n^{1/\beta }\right) }\exp
\left( -c\left( \frac{d^{\beta }}{n}\right) ^{\frac{1}{\beta -1}}\right) ,
\label{ueb} \\
p_{n}\left( x,y\right) +p_{n+1}\left( x,y\right) &\geq &\frac{c}{V\left(
x,n\alpha ^{1/\beta }\right) }\exp \left( -C\left( \frac{d^{\beta }}{n}%
\right) ^{\frac{1}{\beta -1}}\right)  \label{leb}
\end{eqnarray}%
which is local in the volume $V\left( x,R\right) $ but the mean exit time is
uniform with respect to the space, $E\left( x,R\right) \simeq R^{\beta }\,$.
(See \cite{B},\cite{GT1},\cite{GT2} or \cite{TD} for further remarks and
history of the the heat kernel estimates.) One can rise the next natural
question. \

\begin{quote}
What can be said if the mean exit time is not polynomial, and what if it
depends on the center of the test ball?
\end{quote}

The present paper answers both questions. Before we explain the results let
us see an example based again on the Vicsek tree. Now the edges replaced
with paths of slowly increasing length (as we depart from the root) . Let us consider a vertex $x$ in a middle of a
distant path. Let be the radius $r$ of the test
ball is smaller (or comparable) to the half of the length of the path, then
we have a classical one dimensional walk in that ball, consequently $E\left(
x,r\right) \simeq r^{2}$. While for big $R$ we have the large scale behavior
of the Vicsek tree, hence $E\left( x,R\right) \simeq R^{\frac{\log 15}{\log 3%
}}>>R^{2}$ for large $R.$ \ On the other hand all balls centered at the root
has the usual behavior of the Vicsek tree $E\left( x,R\right) \simeq R^{%
\frac{\log 15}{\log 3}}.$ \ We shall see that this stretched Vicsek tree has
all the properties needed to obtain an upper bound for the heat kernel. \
The details of this example will be given in Section \ref{sexample}. Several
further graphs can be constructed in a similar way. \ For instance we
consider a graph which possesses some nice properties and replace the edges
(or well-defined sub-graphs) with elements of a class of graphs (again with
increasing size as we depart from a reference vertex) connecting them on a
subset of prescribed vertices. Here we replace the edges of the
Vicsek tree with diamonds formed by two Sierpinski triangles \ As the
distance grows from the root, bigger Sierpinski triangles are inserted. (To
keep the needed properties of the graph the increase of the size of the
triangles should be slow.)

In order to present the main results we have to introduce the essential
notions of the paper.

\begin{definition}
\label{dvd}The weighted graph has the \emph{volume doubling }$\left( \mathbf{%
VD}\right) $ property if there is a constant $D_{V}>0$ such that for all $%
x\in \Gamma $ and $R>0$%
\begin{equation}
V(x,2R)\leq D_{V}V(x,R).  \label{PD1V}
\end{equation}
\end{definition}

\begin{notation}
For convenience we introduce a short notation for the volume of the annulus;
$v=v(x,r,R)=V(x,R)-V(x,r).$
\end{notation}

\begin{notation}
For two real sequences $a_{\xi },b_{x}$ we will write%
\begin{equation*}
a_{\xi }\simeq b_{\xi }
\end{equation*}%
if there is a $C\geq 1$ such that for all $\xi $%
\begin{equation*}
\frac{1}{C}a_{\xi }\leq b_{\xi }\leq Ca_{\xi }.
\end{equation*}
\end{notation}

Now let us consider the exit time
\begin{equation*}
T_{B(x,R)}=\min \{k:X_{k}\notin B(x,R)\}
\end{equation*}%
from the ball $B(x,R)$ and its mean value
\begin{equation*}
E_{z}(x,R)=\mathbb{E}(T_{B(x,R)}|X_{0}=z)
\end{equation*}
and let us use the
\begin{equation*}
E(x,R)=E_{x}(x,R)
\end{equation*}
short notation.

\begin{definition}
We will say that the weighted graph $(\Gamma ,\mu )$ satisfies the time
comparison principle $\left( \mathbf{TC}\right) $ if there is a constant $%
C_{T}>1$ such that for all $x\in \Gamma $ and $R>0,y\in B\left( x,R\right) $%
\begin{equation}
\frac{E(x,2R)}{E\left( y,R\right) }\leq C_{T}.  \label{TC}
\end{equation}
\end{definition}

\begin{definition}
We will say that $(\Gamma ,\mu )$ has \emph{time doubling} property $\left(
\mathbf{TD}\right) $ if there is a $D_{T}>0$ such that for all $x\in \Gamma $
and $R\geq 0$%
\begin{equation}
E(x,2R)\leq D_{T}E(x,R).  \label{TD}
\end{equation}
\end{definition}

\begin{remark}
It is clear that from the time doubling property $\left( TD\right) $ it
follows that there are $C>0$ and $\beta >0$ such that for all $x\in \Gamma $
\ and $R>r>0$%
\begin{equation}
\frac{E(x,R)}{E(x,r)}\leq C\left( \frac{R}{r}\right) ^{\beta }.
\label{tdbeta}
\end{equation}
\end{remark}

Basically the volume doubling condition $\left( \ref{PD1V}\right) $ and the
time comparison principle $\left( \ref{TC}\right) $ specify the local
framework for our study. It is clear that $\left( TC\right) $ implies $%
\left( TD\right) ,$ the time doubling property while the reverse seems to be
not true even if $\left( VD\right) $ is assumed.

The goal of the present paper is twofold. \ First we would like to give
characterization of graphs which have on- and off-diagonal upper estimate if
neither the volume nor the mean exit time is uniform in the space like in
the above examples. \ Secondly we give characterization of graphs which have
two-sided heat kernel estimates. \ For that we consider graphs with the
volume doubling property and it is assumed that the mean exit time is
uniform in the space, more precisely satisfies $\left( \mathbf{E}\right) $:%
\begin{equation}
E(x,R)\simeq E\left( y,R\right)  \label{EF}
\end{equation}%
holds, i.e. the mean exit time does not depend on the center of the ball.
The \textit{semi-local framework} will be the determined by the conditions $%
\left( VD\right) $ and $\left( E\right) .$

In \cite{TD}, it was shown that for strongly recurrent graphs upper
estimates can be obtained in the local framework and two-sided estimates in
the semi-local framework. \ Here we present similar result dropping the
condition \ of strong recurrence and generalize them in many respect.

\begin{condition}
In several statements we assume that condition $\mathbf{(p}_{0}\mathbf{)}$
holds, that is, there is a universal $p_{0}>0$ such that for all $x,y\in
\Gamma ,x\sim y$
\begin{equation}
\frac{\mu _{x,y}}{\mu (x)}\geq p_{0}.  \label{p0}
\end{equation}
\end{condition}

\begin{notation}
For a set $A\subset \Gamma $ denote the closure by
\begin{equation*}
\overline{A}=\left\{ y\in \Gamma :\text{there is an }x\in A\text{ such that }%
x\sim y\right\} .
\end{equation*}
The external boundary is defined as $\partial A=\overline{A}\backslash A.$
\end{notation}

\begin{definition}
A function $h$ is harmonic on a set $A\subset \Gamma $ if it is defined on $%
\overline{A}$ and
\begin{equation*}
Ph\left( x\right) =\sum_{y}P\left( x,y\right) h\left( y\right) =h\left(
x\right)
\end{equation*}%
for all $x\in A.$
\end{definition}

\

\begin{theorem}
\label{tLDUE}For any weighted graph $(\Gamma ,\mu )$ if $\left( p_{0}\right)
,(VD)$ and\ $\left( TC\right) $ hold, then the following statements are
equivalent:

\begin{enumerate}
\item The mean value inequality $\left( \mathbf{MV}\right) $ holds: there is
a $C>0$ such that for all $x\in \Gamma ,R>0$ and for all $u\geq 0$ harmonic
functions on $B=B\left( x,R\right) $%
\begin{equation}
u\left( x\right) \leq \frac{C}{V\left( x,R\right) }\sum_{y\in B}u\left(
y\right) \mu \left( y\right) ,  \label{MV}
\end{equation}

\item the local diagonal upper estimate $\left( \mathbf{DUE}\right) $ holds:
there is a $C>0$ such that for all $x\in \Gamma ,n>0$
\begin{equation}
p_{n}(x,x)\leq \frac{C}{V(x,e(x,n))},  \label{LDUE}
\end{equation}%
where $e\left( x,n\right) $ is the inverse of $E\left( x,R\right) $ in the
second variable.

\item the upper estimate $\left( \mathbf{UE}\right) $ holds: there are $%
C,\beta >1,c>0$ such that for all $x,y\in \Gamma ,$ $n>0$%
\begin{equation}
p_{n}\left( x,y\right) \leq \frac{C}{V\left( x,e\left( x,n\right) \right) }%
\exp \left[ -c\left( \frac{E\left( x,d\left( x,y\right) \right) }{n}\right)
^{\frac{1}{\beta -1}}\right] .  \label{UE1}
\end{equation}
\end{enumerate}
\end{theorem}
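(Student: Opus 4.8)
The natural strategy is to establish the cycle of implications
$(MV)\Rightarrow(DUE)\Rightarrow(UE)\Rightarrow(MV)$, since $(UE)\Rightarrow(DUE)$ is trivial (set $y=x$ and note $E(x,d(x,x))=E(x,0)=0$) and $(UE)\Rightarrow(MV)$ is the only place where the full off-diagonal bound is used. The whole argument rests on the fact that under $(p_0),(VD),(TC)$ the mean exit time is a well-behaved ``time scale'': by the remark after $(TD)$ it grows polynomially, $E(x,R)/E(x,r)\le C(R/r)^\beta$, so its inverse $e(x,n)$ in the second variable is genuinely defined (up to bounded factors) and also doubling in $n$; and $(TC)$ together with $(VD)$ lets us compare $V$ and $E$ at different centers inside a common ball. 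I would record these structural facts first as they are used repeatedly.

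For $(MV)\Rightarrow(DUE)$ I would use the standard iteration argument of Coulhon--Grigor'yan type adapted to the variable time scale. The point is that $p_n(x,\cdot)$, viewed correctly (e.g.\ $p_n(x,y)=P_n(x,y)/\mu(y)$), is essentially a nonnegative ``almost-harmonic'' object, and the mean value inequality controls its value at the center by its average over a ball of radius $e(x,n)$, i.e.\ by $\frac{C}{V(x,e(x,n))}\sum_{B} p_n(x,y)\mu(y)\le \frac{C}{V(x,e(x,n))}$ using $\sum_y P_n(x,y)=1$. Making this rigorous requires the parabolic/caloric version: one applies $(MV)$ to the space-time function and exploits that the ``heat'' does not travel faster than the exit time allows, so the relevant ball at step $n$ has radius $\asymp e(x,n)$; the semigroup property $p_{2n}(x,x)=\sum_y p_n(x,y)^2\mu(y)$ then upgrades a weak on-diagonal bound to the sharp $C/V(x,e(x,n))$, with $(VD)$ absorbing the passage from $2n$ to $n$.

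The implication $(DUE)\Rightarrow(UE)$ is the technical heart and I expect it to be the main obstacle. The on-diagonal bound must be propagated off-diagonal with the sub-Gaussian factor $\exp[-c(E(x,d(x,y))/n)^{1/(\beta-1)}]$, and here the exponent governing the decay is the time-doubling exponent $\beta$ from $(\ref{tdbeta})$, not the walk dimension of any single fractal. The cleanest route is the Davies--Gaffney / Carne--Varopoulos philosophy implemented via exit-time estimates: first prove a ``near-diagonal'' upper estimate for $p_n^{B}(x,y)$ on balls using $(DUE)$ and $(VD)$, then control the probability $\mathbb{P}(T_{B(x,R)}\le n)$ for $R$ of order $d(x,y)$ by a chaining argument over $\sim (E(x,R)/n)^{1/(\beta-1)}$ concentric annuli, each crossed with probability bounded away from $1$ by a Markov-inequality bound $\mathbb{P}(T_{B(z,\rho)}\le t)\le t/E(z,\rho)$ combined with $(TC)$ to make the comparison uniform in the center $z$. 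The delicate points are choosing the number and widths of the annuli optimally so that the product of crossing probabilities yields exactly the stated stretched-exponential in $E(x,d(x,y))/n$, and handling the non-uniformity in $x$ — this is exactly where $(TC)$ (rather than mere $(TD)$) is essential, since one needs $E(y,R)\simeq E(x,R)$ for all $y$ in the relevant ball.

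Finally, $(UE)\Rightarrow(MV)$ follows by a now-standard ``$\lambda$'' or averaging argument: given $u\ge0$ harmonic on $B(x,R)$, write $u(x)$ via the killed walk before exiting $B(x,R)$ does not help directly since $u$ is not bounded, so instead one uses that $u(X_{n\wedge T})$ is a martingale and applies $(UE)$ to the free kernel to show that the contribution of large $n$ is negligible while the sum $\sum_{n}\sum_{y}p_n(x,y)u(y)\mu(y)$ over the finitely many relevant scales is controlled, after splitting $B(x,R)$ into annuli, by $\frac{C}{V(x,R)}\sum_{y\in B(x,R)}u(y)\mu(y)$; the sub-Gaussian tail in $(UE)$ together with $(VD)$ makes the geometric series over annuli converge. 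I would present the first two implications in detail and treat $(UE)\Rightarrow(MV)$ more briefly, citing the analogous computations in \cite{GT1},\cite{TD}.
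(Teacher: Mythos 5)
Your cycle $(MV)\Rightarrow(DUE)\Rightarrow(UE)\Rightarrow(MV)$ is the right shape, and your sketch of $(DUE)\Rightarrow(UE)$ matches the paper's argument (first-exit decomposition at radius $\tfrac12 d(x,y)$, the exit-time estimate $\mathbb{P}(T_{x,r}<n)\leq C\exp(-c\underline{k}(x,n,r))$ obtained by chaining over annuli, Cauchy--Schwarz $p_{2k}(x,y)\leq\sqrt{p_{2k}(x,x)p_{2k}(y,y)}$, and $(VD)+(TC)$ to compare volumes at the two centers). One local error there: Markov's inequality does not give $\mathbb{P}(T_{B(z,\rho)}\leq t)\leq t/E(z,\rho)$; to bound the probability of an \emph{early} exit away from $1$ one needs $\overline{E}(z,\rho)\leq CE(z,\rho)$ (which $(TC)$ supplies) and the inequality $\mathbb{P}(T>t)\geq(E-t)/\overline{E}$. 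That is repairable. Your $(UE)\Rightarrow(MV)$ differs from the paper, which instead proves $(UE)\Rightarrow(\ref{PMV})\Rightarrow(MV)$ by the one-line computation $u_{n}(w)=\sum_{y}P_{n-j}^{B}(w,y)u_{j}(y)\leq\max_{y}p_{n-j}^{B}(w,y)\sum_{y}u_{j}(y)\mu(y)$ followed by averaging over $j$; this is cleaner than a martingale/annulus argument and avoids the boundary-term issues you allude to.

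The genuine gap is in $(MV)\Rightarrow(DUE)$. Your central step --- ``the mean value inequality controls $p_{n}(x,\cdot)$ at the center by its average over $B(x,e(x,n))$'' --- is not legitimate: $(MV)$ as stated applies to \emph{harmonic} functions, and $p_{n}(x,\cdot)$ is caloric, not harmonic. You acknowledge that ``the parabolic/caloric version'' is needed, but that parabolic mean value inequality is itself one of the equivalent conditions in Theorem \ref{tLDUE+}, and in the paper it is derived \emph{from} $(UE)$, i.e.\ strictly downstream of $(DUE)$; invoking it here is circular. The paper's actual route from $(MV)$ to $(DUE)$ is entirely different and is the technical heart of the theorem: $(MV)$ is first shown equivalent to the mean value property $(\ref{MVG})$ of the Green kernel via the representation $u(z)=\sum_{w}g^{U}(z,w)\nu(w)$ of nonnegative harmonic functions (Lemma \ref{lgs}), which yields the Green kernel bound $g^{B}(y,x)\leq CE(x,R)/V(x,d)$, i.e.\ $(\ref{g01})$; this is then upgraded to the diagonal bound $(\ref{gllm})$ for the $\lambda,m$-resolvent $G_{\lambda,m}^{A}=((\lambda+1)I-P^{A})^{-m}$ (Theorem \ref{tgl}), and finally the resolvent bound is converted into $(DUE)$ (Theorem \ref{tG-->LDUE}), both adapted from \cite{GT2}. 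None of this machinery --- nor any substitute such as a Faber--Krahn/Nash iteration scheme --- appears in your proposal, so the elliptic-to-parabolic passage, which is precisely what makes this implication nontrivial, is missing.
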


The existence of $e$ will be clear from the properties of the mean exit time
(c.f. Section \ref{sloc}).

This theorem can be given in a different form if we introduce the skewed
version of the parabolic mean value inequality.

\begin{definition}
\label{dspmv}We shall say that the skewed parabolic mean value inequality
holds if for $0<c_{_{1}}<c_{_{2}}$ constants there is a $C\geq 1$ such that
for all $R>0,x\in \Gamma ,y\in B=B\left( x,R\right) $ for all non-negative
Dirichlet solutions $u_{k}$ of the heat equation%
\begin{equation}
P^{B}u_{k}=u_{k+1}  \label{he}
\end{equation}%
on $\left[ 0,c_{2}E\left( x,R\right) \right] \times B\left( x,R\right) $%
\begin{equation}
u_{n}(x)\leq \frac{C}{V(y,2R)E\left( y,2R\right) }\sum_{i=c_{1}E}^{c_{2}E}%
\sum_{z\in B(x,R)}u_{i}(z)\mu (z)\,  \label{sPMV}
\end{equation}%
satisfied, where $E=E\left( x,R\right) ,n=c_{2}E$.
\end{definition}

\begin{remark}
\label{rsPMV}One can see easily with the choice of $u_{i}\left( y\right)
\equiv 1$ that $\left( \ref{sPMV}\right) $ implies $\left( VD\right) $ and $%
\left( TC\right) .$
\end{remark}

Having this condition the above theorem can be restated as follows.

\begin{theorem}
\label{tsPMV} If $\left( \Gamma ,\mu \right) $ satisfies $\left(
p_{0}\right) $ then the following conditions are equivalent:

\begin{enumerate}
\item the skewed parabolic mean value inequality, $\left( \ref{sPMV}\right) $
holds,

\item $\left( VD\right) ,\left( TC\right) $ and $\left( MV\right) $ holds,

\item $\left( VD\right) ,\left( TC\right) $ and the diagonal upper estimate,
$\left( DUE\right) $ holds,

\item $\left( VD\right) ,\left( TC\right) $ and the upper estimate $\left(
UE\right) $ holds.
\end{enumerate}
\end{theorem}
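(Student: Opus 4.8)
The plan is to establish Theorem \ref{tsPMV} by leveraging Theorem \ref{tLDUE} as a black box, so that the real work is reduced to relating the skewed parabolic mean value inequality $\left(\ref{sPMV}\right)$ to the package $\left(VD\right),\left(TC\right),\left(MV\right)$. By Remark \ref{rsPMV}, inserting $u_i\equiv 1$ into $\left(\ref{sPMV}\right)$ already yields $\left(VD\right)$ and $\left(TC\right)$, so the only genuinely new implications to prove are $\left(\ref{sPMV}\right)\Rightarrow\left(MV\right)$ and, conversely, $\left(VD\right)+\left(TC\right)+\left(MV\right)\Rightarrow\left(\ref{sPMV}\right)$. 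Once those two are in hand, the equivalence of items (2), (3), (4) is exactly the content of Theorem \ref{tLDUE} (each of them being $\left(VD\right)+\left(TC\right)$ together with one of $\left(MV\right)$, $\left(DUE\right)$, $\left(UE\right)$), and the chain of equivalences closes.

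First I would prove $\left(\ref{sPMV}\right)\Rightarrow\left(MV\right)$. Given a non-negative harmonic function $u$ on $B=B(x,R)$, the constant-in-time extension $u_k(z):=u(z)$ restricted to $B$ is a solution of the heat equation $P^Bu_k=u_{k+1}$? — not quite, since $P^Bu$ kills mass on the boundary; instead one takes the caloric extension, i.e. $u_0:=u$ on $\overline B$ and $u_{k+1}:=P^Bu_k$, which is non-negative and, by harmonicity of $u$ and the maximum principle, satisfies $u_k(z)\le u(z)$ together with a matching lower bound $u_k(z)\ge c\,u(z)$ on the smaller ball $B(x,R/2)$ for times $k\le c_2E(x,R)$ — this last comparison is the standard "the heat does not leave a ball of comparable radius in time $\le cE$" estimate and uses $\left(TC\right)$. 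Feeding this into $\left(\ref{sPMV}\right)$ (applied on $B(x,2R)$ say, with $y=x$) bounds $u(x)\simeq u_n(x)$ from above by an average of $u$ over the ball, divided by $V\cdot E$; the spurious factor $E$ in the denominator is compensated because the time-sum runs over $\simeq E$ indices and each summand is $\simeq$ the space-sum of $u$. After using $\left(VD\right)$ to replace $V(y,2R)$ by $V(x,R)$ this gives $\left(\ref{MV}\right)$.

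For the converse, $\left(VD\right)+\left(TC\right)+\left(MV\right)\Rightarrow\left(\ref{sPMV}\right)$, I would go through the heat kernel: by Theorem \ref{tLDUE}, $\left(MV\right)$ together with $\left(p_0\right),\left(VD\right),\left(TC\right)$ gives the full off-diagonal upper estimate $\left(UE\right)$, and also the on-diagonal bound $\left(DUE\right)$. A non-negative Dirichlet solution $u_i$ on $[0,c_2E]\times B(x,R)$ can be written as $u_i(z)=\sum_w P^{B}_{i-j}(z,w)u_j(w)\mu(w)$ for $j<i$; taking $z=x$, $i=n=c_2E$ and an intermediate time $j\simeq E$ one estimates $P^B_{i-j}(x,w)\le p_{i-j}(x,w)\le C/V(x,e(x,i-j))$, and since $i-j\simeq E(x,R)$ one has $e(x,i-j)\simeq R$, so $V(x,e(x,i-j))\simeq V(x,R)\simeq V(y,2R)$ by $\left(VD\right)$. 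This bounds $u_n(x)$ by $\frac{C}{V(y,2R)}\sum_w u_j(w)\mu(w)$ for a single intermediate time $j$; to get the time-averaged right-hand side of $\left(\ref{sPMV}\right)$ one notes that the space-integral $\sum_w u_j(w)\mu(w)$ is, up to constants, monotone-comparable across the window $j\in[c_1E,c_2E]$ — controlled above and below using $\left(TC\right)$ and the Dirichlet property — so replacing the single slice by the average over $\simeq E$ slices costs only the factor $E$ that appears as $E(y,2R)$ in the denominator.

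The main obstacle I anticipate is the bookkeeping around the time window $[c_1E,c_2E]$: one must show the space-integrals $I_j:=\sum_{z\in B}u_j(z)\mu(z)$ of a Dirichlet caloric function are comparable for all $j$ in this window (so that the average over the window is comparable to any single value), and simultaneously that the on-diagonal kernel bound at time $\simeq E(x,R)$ genuinely produces the weight $1/V(x,R)$ rather than something smaller — this is where one needs $e(x,cE(x,R))\simeq R$, which follows from $\left(TC\right)$ and the doubling behavior $\left(\ref{tdbeta}\right)$ of $E$. Handling the passage between balls $B(x,R)$ and $B(x,2R)$ and between centers $x$ and $y\in B(x,R)$ uniformly — so that all constants in $\left(\ref{sPMV}\right)$ are genuinely universal — is the other delicate point, and it is precisely where $\left(VD\right)$ and $\left(TC\right)$ (equivalently the conclusions already extracted from $\left(\ref{sPMV}\right)$ via Remark \ref{rsPMV}) get used repeatedly.
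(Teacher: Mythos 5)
Your architecture is the same as the paper's: reduce Theorem \ref{tsPMV} to Theorem \ref{tLDUE} by connecting $\left( \ref{sPMV}\right) $ with $\left( VD\right) +\left( TC\right) +\left( MV\right) $, using Remark \ref{rsPMV} for the implication $\left( \ref{sPMV}\right) \Rightarrow \left( VD\right) +\left( TC\right) $. Your converse direction $\left( VD\right) +\left( TC\right) +\left( MV\right) \Rightarrow \left( \ref{sPMV}\right) $ is essentially the paper's proof of $2\Rightarrow 3$ inside Theorem \ref{tLDUE+}: represent $u_{n}\left( x\right) =\sum_{w}P_{n-j}^{B}\left( x,w\right) u_{j}\left( w\right) $, bound $p_{n-j}^{B}\leq p_{n-j}$ by the upper estimate, and use the doubling of $V$ and $e$. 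One correction there: the two-sided comparability of the slice integrals $I_{j}=\sum_{z}u_{j}\left( z\right) \mu \left( z\right) $ over the window $\left[ c_{1}E,c_{2}E\right] $ is neither justified (for Dirichlet data concentrated near $\partial B$ the mass $I_{j}=\sum_{z}u_{0}\left( z\right) \mu \left( z\right) \mathbb{P}_{z}\left( T_{B}>j\right) $ decays, and a uniform lower ratio is not clear without further input) nor needed: since the single-slice bound holds for \emph{every} $j$ in the window, you simply sum it over the $\simeq E\left( x,R\right) $ admissible values of $j$, which is exactly how the paper produces the factor $E$ in the denominator (cf. $\left( \ref{presum}\right) $ and $\left( \ref{smaxPMV}\right) $). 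You should also keep $n-j$ bounded below by a fixed multiple of $E$, which forces a slight shrinking of the window.

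The genuine gap is in $\left( \ref{sPMV}\right) \Rightarrow \left( MV\right) $. For the Dirichlet caloric extension $u_{k+1}=P^{B}u_{k}$, $u_{0}=u$, of a non-negative harmonic function one has
\begin{equation*}
u_{k}\left( x\right) =\mathbb{E}_{x}\left[ u\left( X_{k}\right) \mathbf{1}_{\left\{ T_{B}>k\right\} }\right] =u\left( x\right) -\mathbb{E}_{x}\left[ u\left( X_{T_{B}}\right) \mathbf{1}_{\left\{ T_{B}\leq k\right\} }\right] ,
\end{equation*}
and the subtracted term is the harmonic measure of the early-exiting paths weighted by the boundary values of $u$. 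The estimate $\mathbb{P}_{x}\left( T_{B}\leq c_{2}E\right) \leq \varepsilon $ controls only the probability of early exit, not this weighted average: if $u$ is much larger on the portion of $\partial B$ favored by the exiting paths, the term can absorb essentially all of $u\left( x\right) $. Turning ``small exit probability'' into ``small weighted boundary contribution'' would require $\max_{\partial B}u\leq Cu\left( x\right) $, i.e.\ the elliptic Harnack inequality, which is neither assumed nor available here. So your claimed lower bound $u_{k}\left( x\right) \geq cu\left( x\right) $ (a fortiori $u_{k}\geq cu$ on $B\left( x,R/2\right) $ for all $k\leq c_{2}E$) is unsubstantiated. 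The paper avoids this mechanism entirely: by Remark \ref{rpmvs}, $\left( \ref{sPMV}\right) \Leftrightarrow \left( \ref{PMV}\right) +\left( VD\right) +\left( TC\right) $, and the step $\left( \ref{PMV}\right) \Rightarrow \left( MV\right) $ is invoked with the harmonic function entering as a time-independent (sub)solution rather than through its Dirichlet evolution. You need either that reduction or some other argument; the ``heat does not leave the ball'' estimate alone does not close this implication.
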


The above results deal with graphs which satisfy the volume doubling
property and time comparison principle. Let us observe that the exponent in $%
\left( \ref{UE1}\right) $ depend on $x$ \ not only on the distance between $%
x $ and $y$. To find matching exponents for the upper and lower off-diagonal
estimates it seems plausible to assume that the mean exit time is (up to a
constant) is uniform in the space, that is it satisfies $\left( E\right) :$%
\begin{equation*}
E(x,R)\simeq E\left( y,R\right) .
\end{equation*}%
It is connivent to specify a function $F(R)$ for $R\geq 0$%
\begin{equation*}
F(R)=\inf_{x\in \Gamma }E(x,R),
\end{equation*}%
for which from $\left( E\right) $ it follows that $\ $there is a $C_{0}>1$
such that for all $x\in \Gamma $ and $R\geq 0$%
\begin{equation}
F(R)\leq E(x,R)\leq C_{0}F(R).  \label{E}
\end{equation}%
This function inherits certain properties of $E(x,R),$ first of all from the
time doubling property it follows that $F$ also has doubling property:
\begin{equation}
F(2R)\leq D_{E}F(R).  \label{ED1F}
\end{equation}

We shall say that $F$ is a (very) \textit{proper space time scale function}
if it has certain properties which will be defined in Section \ref{ssloc}
(c.f. Definition \ref{dpropf}).

The function $F$ with the inherited properties will take over the role of $%
R^{\beta }$ $($or $R^{2}).$ \ \ The inverse function of $F,$ $f(.)=F^{-1}(.)$
takes over the role of $(R^{\frac{1}{\beta }})$ $R^{\frac{1}{2}}$ in the
(sub-) Gaussian estimates.\ The existence of $\ f$ \ will be shown in
Section \ref{ssloc}.

\begin{definition}
The sub-Gaussian kernel function with \ respect to a function $F$ is $%
k=k(n,R)\geq 1$, defined as the maximal integer for which
\begin{equation}
\frac{n}{k}\leq F(\left\lfloor \frac{R}{k}\right\rfloor )  \label{kdef}
\end{equation}%
or $k=0$ by definition if there is no appropriate $k$.
\end{definition}

\begin{definition}
The transition probability satisfies $\left( \mathbf{UE}_{F}\right) ,$ the
sub-Gaussian upper estimate with respect to $F$ if there are $c,C>0$ such
that for all $x,y\in \Gamma ,n>0$%
\begin{equation}
p_{n}(x,y)\leq \frac{C}{V(x,f(n))}\exp -ck(n,d(x,y)),  \label{UEF}
\end{equation}%
and $\left( \mathbf{LE}_{F}\right) ,$ the sub-Gaussian lower estimate is
satisfied if
\begin{equation}
\widetilde{p}_{n}(x,y)\geq \frac{c}{V(x,f(n))}\exp -Ck(n,d(x,y)),
\label{LEF}
\end{equation}%
where $\widetilde{p_{n}}=p_{n}+p_{n+1}.$
\end{definition}

In the semi-local framework we have the following result.

\begin{theorem}
\label{tmain}If a weighted graph $(\Gamma ,\mu )$ satisfies $\left(
p_{0}\right) $ then the following statements are equivalent:

\begin{enumerate}
\item for a very proper $F$ $(UE_{F})$ and $(LE_{F})$ hold

\item for a very proper $F$ the F-parabolic Harnack inequality holds,

\item $(VD),(E)$ and the elliptic Harnack inequality hold.
\end{enumerate}
\end{theorem}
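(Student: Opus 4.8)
## Proof Plan for Theorem \ref{tmain}

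The plan is to establish the equivalence by a cycle of implications, $(1)\Rightarrow(2)\Rightarrow(3)\Rightarrow(1)$, exploiting the fact that the semi-local framework (volume doubling plus $(E)$) lets us replace the center-dependent quantities $V(x,e(x,n))$ and $E(x,d(x,y))$ by the uniform objects $V(x,f(n))$ and $F(d(x,y))$ up to multiplicative constants. The key preliminary observation is that under $(E)$, the mean exit time $E(x,R)$ is comparable to $F(R)$ uniformly, so $(TC)$ follows automatically and the kernel function $k(n,R)$ of Definition (for $(UE_F)$/$(LE_F)$) is comparable to $\left(F(R)/n\right)^{1/(\beta-1)}$ whenever $F$ is (very) proper; this is the bridge that makes the earlier Theorems \ref{tLDUE} and \ref{tsPMV} applicable in this setting. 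I would state and use these comparabilities as lemmas in Section \ref{ssloc} before entering the main argument.

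For $(1)\Rightarrow(2)$: from $(UE_F)$ together with $(LE_F)$ I would first extract the on-diagonal two-sided bound $p_n(x,x)+p_{n+1}(x,x)\simeq 1/V(x,f(n))$, then upgrade the lower estimate to a \emph{near-diagonal} lower bound — i.e. $\widetilde p_n(x,y)\ge c/V(x,f(n))$ for all $y$ with $d(x,y)\le \varepsilon f(n)$ — which is the standard route (via the spectral/Carne--Varopoulos type control of the off-diagonal tail supplied by $(UE_F)$) to a full parabolic Harnack inequality. Combining the near-diagonal lower bound, the global upper bound $(UE_F)$, and volume doubling (which $(UE_F)$ and $(LE_F)$ force, by summing over a ball), one obtains the $F$-parabolic Harnack inequality by the now-classical oscillation/chaining argument of Fabes--Stroock adapted to graphs and to the scale function $F$. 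The implication $(2)\Rightarrow(3)$ should be comparatively soft: the $F$-parabolic Harnack inequality applied to space-time-independent (stationary) solutions yields the elliptic Harnack inequality, and applied to the constant solution $u\equiv 1$ on cylinders of height $F(x,R)$ it yields $(VD)$ and the two-sided control $E(x,R)\simeq F(R)$, hence $(E)$ — this is the graph analogue of Remark \ref{rsPMV}.

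For $(3)\Rightarrow(1)$, the engine is Theorem \ref{tsPMV} (or equivalently Theorem \ref{tLDUE}): I would show that $(VD)$, $(E)$ and the elliptic Harnack inequality together imply the (skewed) parabolic mean value inequality, which by Theorem \ref{tsPMV} gives $(UE_F)$ once the comparabilities above are used to rewrite $(UE1)$ in the form $(UEF)$. The elliptic Harnack inequality is used to pass from the mean value inequality to a full Harnack chain, and the standard Barlow--Bass--type iteration then converts the on-diagonal upper bound plus Harnack into both the off-diagonal upper bound and the matching lower bound $(LE_F)$; here $(E)$ is exactly what guarantees the exponents match. The main obstacle — and the step that deserves the most care — is the construction of the sharp \emph{lower} sub-Gaussian estimate $(LE_F)$ with the correct kernel function $k(n,d)$: one must propagate the near-diagonal lower bound along a chain of $\sim k$ intermediate balls each of the right radius, controlling the product of $\sim k$ factors of the form $V(\cdot,f(n/k))/V(\cdot,\text{radius})$ using volume doubling and the reverse-doubling/very-properness of $F$, and showing the accumulated constant is $\exp(-Ck)$ rather than something larger. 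Verifying that "very proper" is precisely the hypothesis on $F$ needed to make this chaining and the inversion $k\simeq (F(d)/n)^{1/(\beta-1)}$ work is where the bookkeeping is heaviest, and I would isolate it as a separate lemma.
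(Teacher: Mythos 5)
Your overall architecture is the same as the paper's: the cycle $(3)\Rightarrow(1)\Rightarrow(2)\Rightarrow(3)$, with the upper estimate in $(3)\Rightarrow(1)$ obtained from $(H)\Rightarrow(MV)$ and the mean-value machinery of Theorems \ref{tLDUE+}/\ref{tDUE+}, the lower estimate by Harnack chaining, and the implications $(1)\Rightarrow(2)\Rightarrow(3)$ handled by the standard near-diagonal/oscillation arguments (which the paper simply imports from \cite{TD}). However, there are two concrete gaps. First, your derivation of $(VD)$ and $(E)$ from the $F$-parabolic Harnack inequality by ``applying it to the constant solution $u\equiv 1$'' does not work: unlike the skewed parabolic mean value inequality \eqref{sPMV}, whose right-hand side carries the explicit normalization $1/\bigl(V(y,2R)E(y,2R)\bigr)$ (so that $u\equiv 1$ yields Remark \ref{rsPMV}), the parabolic Harnack inequality compares two values of $u$ with no volume or time factor, and constants satisfy it vacuously. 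To extract $(VD)$ one must apply the PHI to nonconstant solutions, typically the heat kernel itself (comparing $p_n(x,x)$ with $\widetilde p_{n'}(x,y)$ and summing over a ball), and to recover $(E)$ the paper's route goes through the uniformity of $\rho(x,R,2R)v(x,R,2R)$ (condition \eqref{rrvA}) combined with the Einstein relation of Theorem \ref{tslERrv}; this intermediate condition, which is item~3 of Theorem \ref{tmain+}, is entirely absent from your plan.

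Second, items 1 and 2 of the theorem assert the existence of a \emph{very proper} $F$, i.e.\ one satisfying the strong anti-doubling property $F(A_FR)\geq B_FF(R)$ with $B_F>A_F$ (equivalently $\beta'>1$ in \eqref{aDF1}). You correctly identify that very-properness is what makes the chaining lower bound and the inversion $k\simeq(F(d)/n)^{1/(\beta-1)}$ close up, but you never say where it comes from in the implication $(3)\Rightarrow(1)$. Under $(p_0)$, $(VD)$, $(TD)$ and $(E)$ alone, Corollary \ref{cFproper} only gives that $F$ is proper; the upgrade to very proper genuinely requires the elliptic Harnack inequality, via the resistance doubling of Proposition \ref{prd} and the Einstein relation $E(x,2R)\simeq\rho(x,R,2R)v(x,R,2R)$, which is precisely why the paper devotes a separate proposition to showing that $F(R)=\inf_x\rho(x,R,2R)v(x,R,2R)$ satisfies \eqref{sadF} under $(p_0),(VD),(H)$ and $(E)$. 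Without this step your cycle produces $(UE_F)$ and $(LE_F)$ only for a proper $F$, which is strictly weaker than the statement to be proved.
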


The definition of the elliptic and parabolic Harnack inequality and some
other definitions are given in Sections \ref{sprel} and \ref{ssloc}.

In Section \ref{ssloc} characterization of graphs satisfying separately the
upper estimate $\left( UE_{F}\right) $ will also be given

Let us mention that in this generality Hebisch and Saloff-Coste in \cite{HS}
proved the equivalence of $1$. and $2$ of Theorem \ref{tmain}.

The complete characterization of graphs which have two-sided heat kernel
Gaussian estimates (c.f. $\left( \ref{ueb}\right) $, $\left( \ref{leb}%
\right) $ with $\beta =2$ ) was given by Delmotte \cite{De}. This
characterization has been extended to two-sided sub-Gaussian estimates ( $%
\beta \geq 2$ ) in \cite{GT2}.

In a recent work Li and Wang \cite{LW} proved in the context of complete
Riemannian manifolds that if the \textit{volume doubling property }holds%
\textit{\ }then the following particular upper bound for the Green kernel, $%
g^{B}\left( x,y\right) =\int_{0}^{\infty }p_{t}^{B}\left( x,y\right) dt,$
for $B=B\left( x,R\right) $
\begin{equation}
g^{B}\left( p,x\right) \leq C\int_{d^{2}\left( x,p\right) }^{\left(
CR\right) ^{2}}\frac{dt}{V\left( p,\sqrt{t}\right) },  \label{g1}
\end{equation}%
implies the mean value inequality
\begin{equation}
u\left( x\right) \leq \frac{C}{V\left( x,R\right) }\int_{B\left( x,R\right)
}u\left( y\right) dy  \label{contMV}
\end{equation}%
for $u\geq 0$ sub-harmonic functions. \ The opposite direction remained open
question. Here we show that for weighted graphs in the semi-local framework,
namely under conditions $\left( VD\right) ,\left( TD\right) $ and \ $\left(
E\right) ,$ the mean value inequality $(MV)$ implies a Green's function
bound equivalent\ to $\left( \ref{g1}\right) $.

The organization of the paper is the following. \ In Section \ref{sprel} we
collect the basic definitions and preliminary observations. \ In Section \ref%
{sloc} we discuss the local framework and present a much more detailed
version of Theorem \ref{tLDUE}. In Section \ref{ssloc} we study the
semi-local setup and prove Theorem \ref{tmain}.

\textit{Acknowledgment}

\textit{The author is deeply indebted to Professor Alexander Grigor'yan. \
Neither this nor recent other papers of the author would exists without his
ideas and friendly support. Particularly he proposed to study what is the
necessary and sufficient condition of the off-diagonal upper estimate.}

\section{Basic definitions and preliminaries}

\setcounter{equation}{0}\label{sprel} In this section we recall basic
definitions and observations (mainly from \cite{TD} but we warn the reader
that there are minor deviations from the conventions have been used there).

\begin{definition}
The random walk defined on $(\Gamma ,\mu )$ will be denoted by $(X_{n}).$ It
is a reversible Markov chain on the state space $\Gamma $, reversible with
respect to the measure $\mu $ and has one step transition probability
\begin{equation*}
\mathbb{P(}X_{n+1}=y|X_{n}=x)=P(x,y)=\frac{\mu _{x,y}}{\mu (x)}.
\end{equation*}
\end{definition}

\subsection{Volume doubling}

The volume function $V$ has been already defined in $\left( \ref{vdef}%
\right) $.

\begin{remark}
It is evident that on weighted graphs the volume doubling property $\left(
VD\right) $ is equivalent with the \textit{volume comparison principle},
namely there is a constant $C_{V}>1$ such that for all $x\in \Gamma $ and $%
R>0,y\in B\left( x,R\right) $%
\begin{equation}
\frac{V(x,2R)}{V\left( y,R\right) }\leq C_{V}.  \label{VC}
\end{equation}
\end{remark}

\begin{proposition}
\label{plocvol}If $\left( p_{0}\right) $ holds, then, for all $x,y\in \Gamma
$ and $R>0$ and for some $C>1$,
\begin{equation}
V(x,R)\leq C^{R}\mu (x),  \label{vbound}
\end{equation}%
\begin{equation}
p_{0}^{d(x,y)}\mu (y)\leq \mu (x),
\end{equation}%
and for any $x\in \Gamma $
\begin{equation}
\left\vert \left\{ y:y\sim x\right\} \right\vert \leq \frac{1}{p_{0}}.
\label{deg}
\end{equation}
\end{proposition}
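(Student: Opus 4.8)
The plan is to establish the three inequalities in the reverse of the order in which they are stated, since each one feeds into the next. First, for the degree bound $(\ref{deg})$ I would use only the definition $\mu (x)=\sum_{y\sim x}\mu _{x,y}$ together with hypothesis $(p_{0})$: every edge incident to $x$ satisfies $\mu _{x,y}\geq p_{0}\mu (x)$ by $(\ref{p0})$, so $\mu (x)=\sum_{y\sim x}\mu _{x,y}\geq p_{0}\mu (x)\,\bigl|\{y:y\sim x\}\bigr|$, and dividing by $\mu (x)>0$ gives $\bigl|\{y:y\sim x\}\bigr|\leq 1/p_{0}$.

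Next, for the pointwise comparison $p_{0}^{d(x,y)}\mu (y)\leq \mu (x)$, I would first treat the case of neighbours: if $x\sim y$, reversibility gives $\mu _{x,y}=\mu _{y,x}$, and $(p_{0})$ applied at the vertex $y$ gives $\mu _{y,x}\geq p_{0}\mu (y)$; since $\mu (x)\geq \mu _{x,y}$ we obtain $\mu (x)\geq p_{0}\mu (y)$. For arbitrary $x,y$ I would then pick a geodesic $x=x_{0}\sim x_{1}\sim \cdots \sim x_{m}=y$ with $m=d(x,y)$ and iterate this step, obtaining $\mu (x)\geq p_{0}\mu (x_{1})\geq \cdots \geq p_{0}^{m}\mu (y)$.

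Finally, for $(\ref{vbound})$ I would combine the two previous steps. Each $y\in B(x,R)$ has $d(x,y)<R$, so by the comparison just proved $\mu (y)\leq p_{0}^{-d(x,y)}\mu (x)\leq p_{0}^{-R}\mu (x)$ (using $p_{0}\leq 1$). The number of vertices in $B(x,R)$ is controlled by $(\ref{deg})$ through a generation-by-generation count, $\bigl|B(x,R)\bigr|\leq \sum_{0\leq k<R}p_{0}^{-k}\leq C_{1}p_{0}^{-R}$ with $C_{1}$ depending only on $p_{0}$ (and $B(x,R)=\{x\}$ when $R\leq 1$, which is the trivial case). Summing $\mu (y)$ over $B(x,R)$ gives $V(x,R)\leq C_{1}p_{0}^{-2R}\mu (x)$, and absorbing $C_{1}$ into the base of the exponential — legitimate since one may assume $R\geq 1$ — yields $(\ref{vbound})$ with a suitable $C>1$.

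I do not expect a substantive obstacle here: all three bounds are elementary consequences of $(p_{0})$ and reversibility. The only points that need a little care are the bookkeeping in the last step — producing the clean exponent $C^{R}$ rather than $C^{R+1}$ by enlarging $C$ and disposing of $R<1$ separately — and making sure, when iterating along the geodesic, that $(p_{0})$ is applied at the correct endpoint of each edge and that reversibility $\mu _{x,y}=\mu _{y,x}$ is used to pass from $\mu _{x,y}$ to a lower bound in terms of $\mu (y)$.
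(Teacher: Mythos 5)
Your argument is correct and complete. The paper itself gives no proof of this proposition --- it only cites \cite[Proposition 3.1]{GT1} --- and your three steps (the degree bound from $\mu(x)=\sum_{y\sim x}\mu_{x,y}\geq p_{0}\mu(x)\left\vert \{y:y\sim x\}\right\vert$, the one-edge comparison $\mu(x)\geq \mu_{x,y}=\mu_{y,x}\geq p_{0}\mu(y)$ iterated along a geodesic, and the geometric count of $\left\vert B(x,R)\right\vert$ combined with the pointwise bound on $\mu(y)$) are exactly the standard elementary derivation that the cited reference carries out. The bookkeeping at the end, including the disposal of the case $R\leq 1$ where $B(x,R)=\{x\}$, is handled properly.
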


\begin{proof}
\bigskip (c.f. \cite[Proposition 3.1]{GT1})
\end{proof}

\begin{remark}
It follows from the inequality (\ref{vbound}) that, for a fixed $R_{0}$, for
all $R<R_{0}$ $V(x,R)\simeq \mu (x)$.
\end{remark}

\begin{remark}
It is easy to show (c.f. \cite{cg}) that the volume doubling property
implies an anti-doubling property: there is an $A_{V}>1$ such that for all $%
x\in \Gamma ,R>0$%
\begin{equation}
2V(x,R)\leq V(x,A_{V}R).  \label{aVD}
\end{equation}%
One can also show that $\left( VD\right) $ is equivalent with
\begin{equation*}
\frac{V(x,R)}{V(y,S)}\leq C\left( \frac{R}{S}\right) ^{\alpha },
\end{equation*}%
where $\alpha =\log _{2}D_{V}$ and $d(x,y)\leq R$, which is the original
form of Gromov's volume comparison inequality (c.f. \cite{GM}). (For the
proof see again$_{{}}$\cite{cg})
\end{remark}

\begin{remark}
An other direct consequence of $\left( p_{0}\right) $ and $(VD)$ is that
\begin{equation}
v\left( x,R,2R\right) =V(x,2R)-V(x,R)\simeq V(x,R)  \label{V3}
\end{equation}
\end{remark}

\subsection{The resistance}

\begin{definition}
For any two disjoint sets, $A,B\subset \Gamma ,$ the resistance, $\rho
(A,B), $ is defined as
\begin{equation*}
\rho (A,B)=\left( \inf \left\{ \left( \left( I-P\right) f,f\right) _{\mu
}:f|_{A}=1,f|_{B}=0\right\} \right) ^{-1}
\end{equation*}%
and we introduce
\begin{equation*}
\rho (x,S,R)=\rho (B(x,S),\Gamma \backslash B(x,R))
\end{equation*}%
for the resistance of the annulus around $x\in \Gamma ,$ with $R>S\geq 0$.
\end{definition}

\begin{definition}
We say that the product of the resistance and volume of the annulus is
uniform in the space if for all $x,y\in \Gamma ,R\geq 0$
\begin{equation}
\rho (x,R,2R)v(x,R,2R)\simeq \rho (y,R,2R)v(y,R,2R).  \label{rrvA}
\end{equation}
\end{definition}

\begin{corollary}
\label{crav>l2}For all weighted graphs, $x\in \Gamma ,r\geq s\geq 0$%
\begin{equation}
\rho (x,s,r)v(x,s,r)\geq (r-s)^{2},  \label{rv>2}
\end{equation}
\end{corollary}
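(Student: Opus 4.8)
The plan is to reduce the inequality $\rho(x,s,r)v(x,s,r)\geq (r-s)^2$ to a one-dimensional (series) estimate, exploiting the fact that the graph distance is monotone along any path leaving an annulus. First I would fix the center $x$ and write $S=d(x,\cdot)$ for the distance-from-$x$ function; for $s\leq k < r$ let $\Delta_k=\{y:d(x,y)=k\}$ be the $k$-th ``sphere'' and $v_k=\mu(\Delta_k)$, so that $v(x,s,r)=\sum_{k=s}^{r-1}v_k$ (up to the usual boundary conventions). The key observation is that the resistance of the annulus $B(x,r)\setminus B(x,s)$ is bounded below by the resistance of the network obtained by collapsing each sphere $\Delta_k$ to a single node: any unit flow from $B(x,s)$ to $\Gamma\setminus B(x,r)$ induces, by projection, a unit flow through the ``layered'' network, and collapsing vertices can only decrease resistance. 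Hence
\begin{equation*}
\rho(x,s,r)\geq \sum_{k=s}^{r-1}\frac{1}{c_k},
\end{equation*}
where $c_k=\sum_{y\in\Delta_k,\,z\in\Delta_{k+1},\,y\sim z}\mu_{y,z}$ is the total conductance between consecutive spheres.

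Next I would control $c_k$ from above by $v_k$ (or by $v_{k+1}$): since every edge contributing to $c_k$ has one endpoint in $\Delta_k$, we have $c_k\leq \mu(\Delta_k)=v_k$ — indeed $c_k\leq \sum_{y\in\Delta_k}\mu(y)=v_k$ because the edges counted in $c_k$ form a subset of the edges incident to $\Delta_k$. Therefore
\begin{equation*}
\rho(x,s,r)\,v(x,s,r)\;\geq\;\left(\sum_{k=s}^{r-1}\frac{1}{c_k}\right)\left(\sum_{k=s}^{r-1}v_k\right)\;\geq\;\left(\sum_{k=s}^{r-1}\frac{1}{v_k}\right)\left(\sum_{k=s}^{r-1}v_k\right).
\end{equation*}
By the Cauchy–Schwarz inequality applied to the vectors $(1/\sqrt{v_k})$ and $(\sqrt{v_k})$, the right-hand side is at least $(r-s)^2$, which is exactly the claim. (One should double-check the edge cases $s=0$ and the off-by-one in the summation ranges, but these only affect constants and in fact go the right way.)

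The main obstacle, and the step that needs to be done carefully rather than waved at, is the first inequality: that collapsing the spheres $\Delta_k$ does not increase the resistance, i.e. the lower bound $\rho(x,s,r)\geq\sum 1/c_k$. The clean way to see this is via the dual (flow/Thomson) characterization of resistance: $\rho(A,B)=\inf_{\text{unit flows }\theta\text{ from }A\text{ to }B}\sum_e \theta(e)^2/\mu_e$. Given any such unit flow $\theta$ on the annulus, the net flow crossing from $\Delta_k$ to $\Delta_{k+1}$ equals $1$ for every $s\leq k<r$ (conservation of flow, since all sources lie in $B(x,s)$ and all sinks outside $B(x,r)$); then $1=\big(\sum_{e\text{ crossing level }k}\theta(e)\big)^2\leq \big(\sum_e\mu_e\big)\big(\sum_e\theta(e)^2/\mu_e\big)=c_k\sum_{e}\theta(e)^2/\mu_e$ by Cauchy–Schwarz, so summing over $k$ gives $\sum_e\theta(e)^2/\mu_e\geq\sum_k 1/c_k$; taking the infimum over $\theta$ yields the bound. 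This is really the heart of the matter; the rest is two applications of Cauchy–Schwarz and bookkeeping.
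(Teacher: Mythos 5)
Your proof is correct. The paper itself gives no argument for this corollary --- it only cites \cite{T1} and \cite{ter} for the details --- and the argument you give (disjoint edge-cutsets between consecutive spheres, the Nash--Williams/Thomson flow bound $\rho(x,s,r)\geq\sum_{k}1/c_{k}$, the trivial bound $c_{k}\leq\mu(\Delta_{k})$, and a final Cauchy--Schwarz) is exactly the standard proof from those sources, with the key step (why shorting the spheres only decreases resistance) spelled out rather than waved at. The bookkeeping also checks out: the cutsets $E_{k}$ for $s\leq k\leq r-1$ are genuinely disjoint and each separates $B(x,s)$ from $\Gamma\setminus B(x,r)$ because the distance function changes by at most one along an edge, and these $r-s$ levels match the $r-s$ spheres whose measures sum to $v(x,s,r)$.
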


\begin{proof}
The idea of the proof taken from \cite{T1}, for the details see \cite{ter}.
\end{proof}

\subsection{The mean exit time}

Let us introduce the exit time $T_{A}.$

\begin{definition}
The exit time from a set $A$ is defined as
\begin{equation*}
T_{A}=\min \{k:X_{k}\in \Gamma \backslash A\},
\end{equation*}%
its expected value is denoted by
\begin{equation*}
E_{x}(A)=\mathbb{E}(T_{A}|X_{0}=x)
\end{equation*}%
and let us use the $E=E(x,R)=E_{x}(x,R)$ short notation.
\end{definition}

In this section we introduce several properties of the mean exit time which
will play crucial role in the whole sequel.

The time comparison principle evidently implies the following weaker
inequality: there is a $C>0$ such that
\begin{equation}
\frac{E(x,R)}{E(y,R)}\leq C  \label{pd2e}
\end{equation}%
\ for all $x\in \Gamma ,R>0,y\in B\left( x,R\right) .$ One can observe that $%
\left( \ref{pd2e}\right) $ is the difference between $\left( TC\right) $ and
$\left( TD\right) .$ It is easy to see that $\left( TC\right)
\Longleftrightarrow \left( TD\right) +$ $\left( \ref{pd2e}\right) .$ It also
follows easily that $\left( TC\right) $ is equivalent with the existence of
a $C,\beta \geq 1$ constants for which
\begin{equation}
\frac{E(x,R)}{E(y,S)}\leq C\left( \frac{R}{S}\right) ^{\beta },
\label{tcbeta}
\end{equation}%
for all $y\in B\left( x,R\right) ,R\geq S>0$.

\begin{remark}
\label{re<1/p} It is easy to see that condition $\left( p_{0}\right) $
implies that for all $x\in \Gamma ,R\geq 1$%
\begin{equation*}
E\left( x,R\right) \leq \left( \frac{1}{p_{0}}\right) ^{R}
\end{equation*}
\end{remark}

\begin{definition}
The maximal mean exit time is defined as
\begin{equation*}
\overline{E}(A)=\max_{x\in A}E_{x}(A)
\end{equation*}%
and particularly the $\overline{E}(x,R)=\overline{E}(B(x,R))$ notation will
be used.
\end{definition}

\begin{definition}
The local kernel function $\underline{k},$ $\underline{k}=\underline{k}%
(n,x,R)\geq 1,$ is defined as the maximal integer for which
\begin{equation}
\frac{n}{k}\leq \min\limits_{y\in B(x,R)}E(y,\left\lfloor \frac{R}{k}%
\right\rfloor )  \label{iter}
\end{equation}%
or $\underline{k}=0$ by definition if there is no appropriate $k$.
\end{definition}

\subsection{Mean value inequalities}

\begin{definition}
The random walk on the weighted graph is a reversible Markov chain and the
Markov operator $P$ \ is naturally defined by%
\begin{equation*}
Pf\left( x\right) =\sum P\left( x,y\right) f\left( y\right) .
\end{equation*}
\end{definition}

\begin{definition}
The Laplace operator on the weighted graph $\left( \Gamma ,\mu \right) $ is
defined simply as
\begin{equation*}
\Delta =P-I.
\end{equation*}
\end{definition}

\begin{definition}
For $A\subset \Gamma $ consider the Markov operator $P^{A}$ restricted to $%
A. $ This operator is the Markov operator of the killed Markov chain, which
is killed on leaving $A,$ also corresponds to the Dirichlet boundary
condition on $A$. \ Its iterates are denoted by $P_{k}^{A}$.
\end{definition}

\begin{definition}
The Laplace operator with Dirichlet boundary conditions on a finite set $%
A\subset \Gamma $ defined as%
\begin{equation*}
\Delta ^{A}f\left( x\right) =\left\{
\begin{array}{ccc}
\Delta f\left( x\right) & \text{if} & x\in A \\
0 & if & x\notin A%
\end{array}%
\right. .
\end{equation*}%
The smallest eigenvalue of $-\Delta ^{A}$ is denoted in general by $\lambda
(A)$ and for $A=B(x,R)$ it is denoted by $\lambda =\lambda (x,R)=\lambda
(B(x,R)).$
\end{definition}

\begin{definition}
The energy or Dirichlet form $\mathcal{E}\left( f,f\right) $ associated to
the electric network can be defined as
\begin{equation*}
\mathcal{E}\left( f,f\right) =-\left( \Delta f,f\right) =\frac{1}{2}%
\sum_{x,y\in \Gamma }\mu _{x,y}\left( f\left( x\right) -f\left( y\right)
\right) ^{2}.
\end{equation*}
\end{definition}

Using this notation the smallest eigenvalue of $-\Delta ^{A}$ can be defined
by
\begin{equation}
\lambda \left( A\right) =\inf \left\{ \frac{\mathcal{E}\left( f,f\right) }{%
\left( f,f\right) }:f\in c_{0}\left( A\right) ,f\neq 0\right\}  \label{ldef}
\end{equation}%
as well.

\begin{definition}
We introduce
\begin{equation*}
G^{A}(y,z)=\sum_{k=0}^{\infty }P_{k}^{A}(y,z)
\end{equation*}%
the local Green function, the Green function of the killed walk and the
corresponding Green's kernel as
\begin{equation*}
g^{A}(y,z)=\frac{1}{\mu \left( z\right) }G^{A}(y,z).
\end{equation*}
\end{definition}

\begin{definition}
\label{dpmv}We say that the parabolic mean value inequality$\ $holds on $%
(\Gamma ,\mu )$ if for fixed constants $0\leq c_{1}<c_{2}$ there is a $C>1$
such that for arbitrary $x\in \Gamma $, $n\in \mathbb{N}$ and $R>0,$ using
the notations $\ E=E\left( x,R\right) ,B=B\left( x,R\right) ,n=c_{2}E,\Psi =%
\left[ 0,n\right] \times B$ for any non-negative Dirichlet solution of the
heat equation
\begin{equation*}
P^{B}u_{i}=u_{i+1}
\end{equation*}%
on $\Psi ,$ the inequality
\begin{equation}
u_{n}(x)\leq \frac{C}{V(x,R)E\left( x,R\right) }\sum_{i=c_{1}E}^{n}\sum_{y%
\in B(x,R)}u_{i}(y)\mu (y)\,  \label{PMV}
\end{equation}%
holds.
\end{definition}

\begin{remark}
\label{rpmvs}Let us observe the difference between the definitions of the
parabolic and skewed parabolic mean value inequality in Definition \ref%
{dspmv} and \ref{dpmv}. As it was noted in Remark \ref{rsPMV} the skewed
parabolic mean value inequality implies $\left( VD\right) $ and $\left(
TC\right) ,$ which yields in fact the equivalence%
\begin{equation*}
\left( \ref{sPMV}\right) \Longleftrightarrow \left( \ref{PMV}\right) +\left(
VD\right) +\left( TC\right) .
\end{equation*}
\end{remark}

\begin{remark}
The above definitions of the parabolic mean value inequality and mean value
inequality can be extended to sub-solutions and the corresponding results
remain valid.
\end{remark}

\begin{definition}
We say that the a mean value property holds for the Green kernels on $%
(\Gamma ,\mu )$ \ if there is a $C>1$ such that for all $R>0,x\in \Gamma $, $%
B=B\left( x,R\right) $ and $y\in \Gamma ,d=d\left( x,y\right) >0$%
\begin{equation}
g^{B}(y,x)\leq \frac{C}{V(x,d)}\sum_{z\in B(x,d)}g^{B}(y,z)\mu (y)\,.
\label{MVG}
\end{equation}
\end{definition}

\begin{definition}
We say that the Green kernel satisfy upper bound with respect to a function $%
F$ (c.f. \cite{LW}) \ on $(\Gamma ,\mu )$ if for a $C^{\prime }>1$ there is
a $C>1$ such that for all\ $R>0$ and $y\in \Gamma ,d=:d\left( x,y\right) >0,$
$B=B\left( x,R\right) $
\begin{equation}
g^{B}(y,x)\leq \sum_{i=F(d)}^{C^{\prime }F\left( R\right) }\frac{C}{V\left(
x,f\left( i\right) \right) }  \label{UBG}
\end{equation}%
where $f\left( .\right) $ is the inverse function of $F\left( .\right) .$
\end{definition}

\begin{definition}
The Green kernel is bounded by the ratio of the mean exit time and volume on
$(\Gamma ,\mu )$ if there is a $C>1$ such that for all $R>0$ and $y\in
\Gamma ,d=:d\left( x,y\right) >0,B=B\left( x,R\right) $
\begin{equation}
g^{B}(y,x)\leq C\frac{E\left( x,R\right) }{V\left( x,d\right) }.  \label{g01}
\end{equation}
\end{definition}

\section{The local theory}

\setcounter{equation}{0}\label{sloc} In this section we shall prove the
following theorem, which implies Theorem \ \ref{tLDUE} \ and by Remark \ref%
{rpmvs}. Theorem \ref{tsPMV} as well.

\begin{theorem}
\label{tLDUE+}For a weighted graph $(\Gamma ,\mu )$ if $\left( p_{0}\right)
,(VD),\left( TC\right) $ conditions hold, then the following statements are
equivalent:\

\begin{enumerate}
\item The local diagonal upper estimate $\left( \mathbf{DUE}\right) $ holds;
there is a $C>0$ such that for all $x\in \Gamma ,$ $n>0$
\begin{equation*}
p_{n}(x,x)\leq \frac{C}{V(x,e(x,n))},
\end{equation*}

\item the upper estimate $\left( \mathbf{UE}\right) $ holds: there are $%
C,\beta >1,c>0$ such that for all $x,y\in \Gamma ,$ $n>0$%
\begin{equation*}
p_{n}\left( x,y\right) \leq \frac{C}{V\left( x,e\left( x,n\right) \right) }%
\exp \left[ -c\left( \frac{E\left( x,d\left( x,y\right) \right) }{n}\right)
^{\frac{1}{\beta -1}}\right] .
\end{equation*}

\item the parabolic mean value inequality, $\left( \ref{PMV}\right) $ holds,

\item the mean value inequality, $\left( MV\right) $ holds,

\item $\left( \ref{MVG}\right) $ holds,

\item $\left( \ref{g01}\right) $ holds.
\end{enumerate}
\end{theorem}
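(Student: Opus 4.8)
The plan is to prove Theorem \ref{tLDUE+} by establishing a cycle of implications among the six statements, exploiting the background assumptions $(p_0)$, $(VD)$ and $(TC)$ throughout. Since $(TC)$ is equivalent to $(TD)+(\ref{pd2e})$ and implies $(\ref{tcbeta})$, and since $(VD)$ gives the anti-doubling and Gromov-type volume comparison, the exit-time profile $E(x,R)$ is comparable at nearby centers and behaves polynomially in scale, so the inverse function $e(x,n)$ exists and is well-behaved. The natural route is
$$
(MV)\Longrightarrow(\ref{MVG})\Longrightarrow(\ref{g01})\Longrightarrow(DUE)\Longrightarrow(UE)\Longrightarrow(\ref{PMV})\Longrightarrow(MV),
$$
with the equivalence of $(\ref{PMV})$ and $(MV)$ being essentially the parabolic-versus-elliptic comparison (one direction is trivial by taking time-independent solutions, the other by iterating the Markov semigroup).

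First I would treat the elliptic side: assume $(MV)$ and derive the Green-kernel mean value property $(\ref{MVG})$. The function $z\mapsto g^B(y,z)$ is harmonic on $B\setminus\{y\}$ and in particular on $B(x,d)$ when $d=d(x,y)$, so $(MV)$ applies on that ball and gives $(\ref{MVG})$ directly (modulo checking that $g^B(y,\cdot)\ge 0$, which is clear). Then $(\ref{MVG})\Rightarrow(\ref{g01})$ follows by bounding $\sum_{z\in B(x,d)}g^B(y,z)\mu(z)$: this sum is the expected time the killed walk started at $y$ spends in $B(x,d)$ before exiting $B$, which is at most $\overline{E}(B)$, and by $(TC)$ and $(VD)$ one has $\overline{E}(x,R)\simeq E(x,R)$ and $V(x,d)$ as the denominator, yielding $g^B(y,x)\le C E(x,R)/V(x,d)$. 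Conversely, to pass from $(\ref{g01})$ to $(DUE)$, I would use the standard identity relating $p_n(x,x)$ to the Green function of a ball of radius $R=e(x,n)$: a Davies--Gaffney / mean-exit-time argument shows $P^B_{2n}(x,x)\le C/E(x,R)\cdot$(something) but more efficiently, one uses that $\lambda(x,R)^{-1}\simeq \overline E(x,R)\simeq E(x,R)$ together with $g^B(x,x)\le CE(x,R)/V(x,1)\simeq CE(x,R)/\mu(x)$ and a spectral/iteration argument to extract $p_n(x,x)\le C/V(x,e(x,n))$ at the matched time $n\simeq E(x,R)$.

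The step I expect to be the main obstacle is $(DUE)\Rightarrow(UE)$, the passage from the on-diagonal bound to the full off-diagonal sub-Gaussian upper estimate with the correct space-dependent exponent $E(x,d(x,y))/n$. This requires an integrated-maximum-principle or Davies-method argument adapted to the non-uniform geometry: one must choose a family of ``distance-like'' functions whose exponential moments under $P^B$ are controlled by $E(x,\cdot)$ rather than by $R^2$, and then iterate the local kernel function $\underline k(n,x,R)$ from $(\ref{iter})$ to produce the exponent $(E(x,d)/n)^{1/(\beta-1)}$. Keeping the volume factor localized at $x$ (i.e. $V(x,e(x,n))$, not $V(y,\cdot)$) while the exponential carries $E(x,d)$ demands careful use of $(VD)$ and $(TC)$ to compare volumes and exit times along a chain of balls between $x$ and $y$; this chaining, combined with an optimization over the number of steps, is the technical heart. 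The remaining implications $(UE)\Rightarrow(\ref{PMV})$ and $(\ref{PMV})\Rightarrow(MV)$ are comparatively routine: the first integrates the pointwise kernel bound against the heat solution over the space-time cylinder $[c_1E,c_2E]\times B$ and uses $V(x,R)E(x,R)$ as the normalizing volume of the cylinder, while the second specializes to stationary solutions $u_i\equiv u$ harmonic on $B$.
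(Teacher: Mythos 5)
Your global route is the same cycle the paper itself follows, namely $(MV)\Rightarrow(\ref{MVG})\Rightarrow(\ref{g01})\Rightarrow(DUE)\Rightarrow(UE)\Rightarrow(\ref{PMV})\Rightarrow(MV)$, and most of your individual steps coincide with the paper's: the Green-kernel steps are the content of Lemma \ref{lgs} (with the occupation-time bound $\sum_{z\in B}G^{B}(y,z)=E_{y}(B)\leq\overline{E}(x,R)\leq CE(x,R)$ exactly as you describe), the passage $(DUE)\Rightarrow(UE)$ is done via the exit-time splitting $(\ref{pcut})$, the bound $\mathbb{P}(T_{x,r}<n)\leq C\exp(-c\underline{k}(x,n,r))$ and the volume-comparison Lemma \ref{lvv} (no Davies method is needed), and $(UE)\Rightarrow(\ref{PMV})\Rightarrow(MV)$ is the routine integration over the cylinder that you indicate.

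The genuine gap is in $(\ref{g01})\Rightarrow(DUE)$. You propose to use only the eigenvalue comparison $\lambda(x,R)^{-1}\simeq E(x,R)$ and the diagonal value $g^{B}(x,x)\leq CE(x,R)/\mu(x)$ together with an unspecified ``spectral/iteration argument''. These two ingredients are insufficient: the diagonal Green bound is just the $d=1$ instance of $(\ref{g01})$ and retains no information about the volume growth at intermediate scales, while a spectral bound of the form $p_{n}^{B}(x,x)\leq e^{-\lambda n}\cdot(\ldots)$ cannot by itself produce the decaying factor $V(x,e(x,n))^{-1}$. What is actually required is the full off-diagonal content of $(\ref{g01})$, i.e. $g^{B}(y,x)\leq CE(x,R)/V(x,d(x,y))$ for \emph{every} $y$, fed into the iterated resolvent $G_{\lambda,m}^{A}=\left(\left(\lambda+1\right)I-P^{A}\right)^{-m}$: summing the Green bound over dyadic annuli yields $g_{\lambda,m}(x,x)\leq C\lambda^{-m}/V(x,e(x,\lambda^{-1}))$ for $m$ large compared with the volume-growth exponent (Theorem \ref{tgl}, estimate $(\ref{gllm})$), and only then is the resolvent bound converted into $(DUE)$ (Theorem \ref{tG-->LDUE}); both steps are adaptations of \cite{GT2}. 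Without this, or an equivalent Faber--Krahn/Nash-type mechanism exploiting all scales $d$, the step fails, and it is precisely the technical heart of the theorem rather than the $(DUE)\Rightarrow(UE)$ step you single out. A minor further point (which the paper also passes over silently): in $(\ref{PMV})\Rightarrow(MV)$ the constant-in-time extension of a harmonic function is not a Dirichlet solution of $P^{B}u_{i}=u_{i+1}$, so ``specializing to stationary solutions'' needs a word of justification.
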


\begin{proposition}
\label{tLDLExy}For any weighted graph $(\Gamma ,\mu )$ if the inequality
\begin{equation}
\overline{E}(x,R)\leq CE(x,R).  \label{ebar1}
\end{equation}
holds then the local diagonal lower estimate%
\begin{equation}
P_{2n}(x,x)\geq \frac{c\mu (x)}{V(x,e(x,2n))}  \label{LDLE}
\end{equation}%
is true and
\begin{equation}
\mathbb{P}(T_{x,R}<n)\leq C\exp -c\underline{k}(x,n,R),  \label{LT}
\end{equation}%
where $\underline{k}$ is local sub-Gaussian kernel defined in $(\ref{iter}).$
\end{proposition}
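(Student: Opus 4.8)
The plan is to deduce both inequalities from one soft ingredient. For any ball $B(z,\rho)$ and any integer $s$, decomposing $E(z,\rho)=\mathbb{E}_z T_{B(z,\rho)}$ over $\{T_{B(z,\rho)}\le s\}$ and $\{T_{B(z,\rho)}>s\}$ and applying the strong Markov property at time $s$ gives $E(z,\rho)\le s+\overline{E}(z,\rho)\,\mathbb{P}_z(T_{B(z,\rho)}>s)$, whence by the hypothesis $(\ref{ebar1})$
\[
\mathbb{P}_z\big(T_{B(z,\rho)}>s\big)\ \ge\ \frac{E(z,\rho)-s}{\overline{E}(z,\rho)}\ \ge\ \frac1C\Big(1-\frac{s}{E(z,\rho)}\Big);
\]
in particular the walk started at $z$ stays in $B(z,\rho)$ for time $s$ with probability at least $\tfrac1{2C}$ whenever $s\le\tfrac12E(z,\rho)$.

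To prove $(\ref{LDLE})$, fix $x,n$, put $R=e(x,2n)$ so that $E(x,R)\ge 2n$, and write $A=B(x,R)$. Killing can only decrease transition probabilities, so using the reversibility of the killed walk and Cauchy--Schwarz,
\[
P_{2n}(x,x)\ \ge\ P^A_{2n}(x,x)\ =\ \mu(x)\sum_{y\in A}\frac{P^A_n(x,y)^2}{\mu(y)}\ \ge\ \frac{\mu(x)}{\mu(A)}\Big(\sum_{y\in A}P^A_n(x,y)\Big)^2\ =\ \frac{\mu(x)}{V(x,R)}\,\mathbb{P}_x\big(T_A>n\big)^2 .
\]
Since $n\le\tfrac12E(x,R)$, the ingredient above gives $\mathbb{P}_x(T_A>n)\ge\tfrac1{2C}$, and as $V(x,R)=V(x,e(x,2n))$ this is $(\ref{LDLE})$ with $c=1/(4C^2)$.

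For $(\ref{LT})$ I would run the standard chaining argument. With $k=\underline{k}(n,x,R)$ and a crossing radius $r$ of order $\lfloor R/k\rfloor$, let $\tau_0=0$ and $\tau_{i+1}=\inf\{t>\tau_i:\,d(X_t,X_{\tau_i})>r\}$ be the successive exit times of radius-$r$ balls about the current position. Since one step changes the distance by at most $1$, the walk stays in $B(x,R)$ up to time $\tau_m$ as soon as $m(r+1)\le R$, so $T_{B(x,R)}\ge\tau_m=\sum_{i=1}^m(\tau_i-\tau_{i-1})$ for a suitable $m\simeq k$. Conditionally on $\mathcal{F}_{\tau_{i-1}}$ the increment $\tau_i-\tau_{i-1}$ is the exit time of a radius-$r$ ball centred at $X_{\tau_{i-1}}\in B(x,R)$, which by $(\ref{iter})$ has mean at least $n/k$; hence by the ingredient above $\mathbb{E}[e^{-\lambda(\tau_i-\tau_{i-1})}\mid\mathcal{F}_{\tau_{i-1}}]\le 1-\tfrac1{2C}\big(1-e^{-\lambda e_r/2}\big)$, where $e_r=\min_{y\in B(x,R)}E(y,r)\ge n/k$. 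Iterating this conditional bound and applying Chebyshev,
\[
\mathbb{P}_x\big(T_{B(x,R)}<n\big)\ \le\ e^{\lambda n}\Big(1-\tfrac1{2C}\big(1-e^{-\lambda e_r/2}\big)\Big)^{m},
\]
and optimising over $\lambda>0$ produces an exponent of order $-m\simeq -\underline{k}$, i.e.\ $(\ref{LT})$, provided the product $m\,e_r$ exceeds $n$ by a sufficiently large constant factor.

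The one genuinely technical point is this last proviso. With $r=\lfloor R/k\rfloor$ exactly, both $m$ and $n/e_r$ are only of order $k$, so $m\,e_r$ is merely of order $n$ and the optimisation yields no margin; one must instead take $r$ to be a suitable constant multiple of $\lfloor R/k\rfloor$, which increases $e_r=\min_y E(y,r)$ past $n/k$ and makes $m\,e_r\gg n$ while keeping $m\simeq k$. Turning this into an inequality uses the (effectively super-linear) scaling of $R\mapsto E(\cdot,R)$ together with $(\ref{ebar1})$ and the extremality encoded in the definition of $\underline{k}$; the bookkeeping here is the bulk of the work, and everything else is routine.
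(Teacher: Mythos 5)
First, a remark on the comparison: the paper does not actually prove this proposition; it simply cites \cite{TD} (Theorem 5.1 for (\ref{LT}) and Propositions 6.4--6.5 for (\ref{LDLE})). Your derivation of the survival bound $\mathbb{P}_z(T_{B(z,\rho )}>s)\geq \frac{1}{C}\left( 1-s/E(z,\rho )\right) $ from the Markov property and (\ref{ebar1}), and your Cauchy--Schwarz proof of (\ref{LDLE}), are correct and are exactly the standard route taken in the cited source; that half of your argument is complete.

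The proof of (\ref{LT}) has a genuine gap, located precisely where you wave at ``bookkeeping''. Your proposed fix --- take $r$ a large constant multiple $L\lfloor R/k\rfloor $ so that $e_{r}$ grows past $n/k$ ``while keeping $m\simeq k$'' --- cannot work as described: the chaining constraint $m(r+1)\leq R$ forces $m\lesssim R/r\simeq k/L$, and the only super-linearity available under hypothesis (\ref{ebar1}) alone is the superadditivity of Lemma \ref{lmarkov}, which yields $e_{Lr}\geq Le_{r}$ and nothing stronger. Hence $m\,e_{Lr}\lesssim \frac{k}{L}\cdot L\,e_{r}\simeq n$: the product $m\,e_{r}$ is invariant under your rescaling to first order, while the optimisation over $\lambda $ requires $m\,e_{r}\geq \gamma _{0}n$ for a large constant $\gamma _{0}$ depending on $C$. (In the extreme case $E(y,\rho )\simeq \rho $, which is consistent with (\ref{ebar1}), no choice of $r$ produces this margin.) Closing the argument needs either a small constant built into the definition (\ref{iter}) of $\underline{k}$, or a genuinely super-linear comparison such as (\ref{tcbeta}) with $\beta >1$ --- i.e.\ $\left( TC\right) $ together with $E(x,R)\geq cR^{2}$ from Remark \ref{re>r2} --- which is in fact how the paper uses the estimate: the proof of Theorem \ref{tldue-ue} invokes (\ref{LT}) only under $\left( TC\right) $ and only through the lower bound (\ref{k>e/n}) on $\underline{k}$. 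So either strengthen the hypothesis to $\left( p_{0}\right) +\left( VD\right) +\left( TC\right) $ and prove the bound directly in the form $\mathbb{P}(T_{x,R}<n)\leq C\exp \left[ -c\left( E(x,R)/n\right) ^{1/(\beta -1)}\right] $ (choose $m\simeq \left( E(x,R)/(\gamma _{0}n)\right) ^{1/(\beta -1)}$ and $r=\lfloor R/m\rfloor $, for which (\ref{tcbeta}) gives $m\,e_{r}\geq \gamma _{0}n$), or reproduce the precise kernel and lemma of \cite{TD}; as it stands the sketch does not establish (\ref{LT}) for arbitrary graphs satisfying only (\ref{ebar1}).
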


The statement $\left( \ref{LT}\right) $ is given in \cite[Theorem 5.1]{TD}, $%
\left( \ref{LDLE}\right) $ in \cite[Proposition 6.4, 6.5]{TD}.

\begin{remark}
It is worth to observe that the diagonal lower estimate $\left( \ref{LDLE}%
\right) $ and the diagonal upper estimate $\left( \ref{LDUE}\right) $
matches up to a constant.
\end{remark}

\subsection{Properties of the mean exit time}

In this section we recall some results from \cite{ter} which describe the
behavior of the mean exit time in the local framework. The first one is the
Einstein relation below in its multiplicative form which plays an important
role.

\begin{theorem}
\label{tLER}\label{tER1}If $\left( p_{0}\right) ,(VD)$ \ and $\left(
TC\right) $ hold then $\left( ER\right) ,$ the Einstein relation
\begin{equation}
E(x,2R)\simeq \rho (x,R,2R)v(x,R,2R)  \label{ER}
\end{equation}%
holds.
\end{theorem}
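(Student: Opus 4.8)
The goal is to establish the Einstein relation $E(x,2R)\simeq \rho(x,R,2R)v(x,R,2R)$ under $(p_0),(VD),(TC)$.

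\medskip

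\textbf{Plan of proof.} The lower bound $E(x,2R)\geq c\,\rho(x,R,2R)v(x,R,2R)$ is the easy half: it follows essentially from the general inequality in Corollary \ref{crav>l2} together with elementary comparison of Green's function masses, and in fact a cleaner route is to use the probabilistic identity $E(x,2R)=\sum_{z\in B(x,2R)}g^{B(x,2R)}(x,z)\mu(z)$ and bound the contribution coming from the annulus $B(x,2R)\setminus B(x,R)$ from below by $\rho(x,R,2R)\,v(x,R,2R)$ up to a constant. The main work is the reverse inequality $E(x,2R)\leq C\,\rho(x,R,2R)v(x,R,2R)$. Here I would argue as follows. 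First, using $(VD)$ and $(p_0)$, reduce to estimating $\overline E(x,2R)$ (the maximal mean exit time), noting $(TC)$ gives $\overline E(x,2R)\simeq E(x,2R)$ after passing to a slightly larger ball; indeed from $(TC)$ one gets $\overline E(x,R)\leq C E(x,R)$ of the type $(\ref{ebar1})$. Second, write $E(x,2R)$ via the Green kernel and split the sum over dyadic annuli $A_k=B(x,2^{-k}\cdot 2R)\setminus B(x,2^{-k-1}\cdot 2R)$; on each annulus control $\sum_{z\in A_k} g^{B(x,2R)}(x,z)\mu(z)$ by $\max_{z\in A_k} g^{B(x,2R)}(x,z)\cdot \mu(A_k)$ and use that the maximum of the Green kernel over the $k$-th annulus is comparable, via the maximum principle and series-parallel resistance estimates, to the resistance $\rho(x,2^{-k-1}2R,2R)$ from that annulus out to the boundary, which telescopes. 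Summability of the resulting series is guaranteed by the doubling of $F$/$E$ (equivalently $(\ref{tcbeta})$ with exponent $\beta$) and $(VD)$, so the whole sum is bounded by a constant times the single term $\rho(x,R,2R)v(x,R,2R)$.

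\medskip

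\textbf{Key steps in order.} (i) Record the Green-kernel representation $E(x,2R)=\sum_{z}g^{B(x,2R)}(x,z)\mu(z)$ and the identity relating the Green kernel at the center to resistances (this is the standard electric-network formula $g^{B}(x,x)=\rho(x,0^+,2R)$, and more generally $g^B(x,z)\leq g^B(x,x)$). (ii) Lower bound: restrict the sum to $z\in B(x,2R)\setminus B(x,R)$, use that for such $z$, $g^{B(x,2R)}(x,z)\geq c\,\rho(x,R,2R)$ — the effective resistance seen from the annulus — and multiply by $v(x,R,2R)$; combine with Corollary \ref{crav>l2} if a cruder bound suffices. (iii) Upper bound: the maximum principle gives $g^{B(x,2R)}(x,z)\leq \rho(x,d(x,z),2R)$ for $z$ at distance $d(x,z)$ from $x$; dyadically decompose and sum, using $(VD)$ to control $\mu(A_k)$ by $D_V^{k}$-type factors against $V(x,R)$ and $(\ref{tcbeta})$ plus the Einstein-type relation on smaller scales (bootstrapping, or citing the resistance estimates from \cite{ter}) to control $\rho$ on smaller annuli; the geometric decay makes the series converge to $C\rho(x,R,2R)v(x,R,2R)$. (iv) Assemble (ii) and (iii).

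\medskip

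\textbf{Main obstacle.} The delicate point is step (iii): controlling $\max_{z\in A_k} g^{B(x,2R)}(x,z)$ by a resistance and then summing the dyadic series without circularity. A naive bound $g^B(x,z)\leq g^B(x,x)=\rho(x,0,2R)$ is too lossy because $\rho(x,0,2R)$ need not be comparable to $\rho(x,R,2R)$ in general; one genuinely needs the location-dependent bound $g^B(x,z)\lesssim\rho(x,d(x,z),2R)$ and then the fact that resistances of nested annuli, combined with their volumes, decay geometrically — which is exactly where $(VD)$ and $(TC)$ (through $(\ref{tcbeta})$) must be used in tandem. Since this is precisely the content attributed to \cite{ter}, in the write-up I would either reproduce the dyadic summation carefully or invoke the resistance–volume estimates proved there, and then the equivalence $(\ref{ER})$ follows by combining the two one-sided bounds.
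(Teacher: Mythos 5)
First, a point of comparison: the paper does not actually prove Theorem \ref{tER1}; its entire ``proof'' is the citation ``For the proof see \cite{ter}''. So there is no in-paper argument to match, and your attempt has to be judged on its own. Judged that way, both halves of your sketch rest on pointwise Green-kernel-versus-resistance bounds that are not available under the stated hypotheses $(p_0)+(VD)+(TC)$, because no Harnack inequality is assumed. For the lower bound you claim $g^{B(x,2R)}(x,z)\geq c\,\rho(x,R,2R)$ for every $z$ in the annulus $B(x,2R)\setminus B(x,R)$; this fails outright for $z$ close to $\partial B(x,2R)$, where the Dirichlet Green kernel tends to zero. The appeal to Corollary \ref{crav>l2} does not repair this: $(\ref{rv>2})$ bounds $\rho v$ from below by $(r-s)^2$, which says nothing about $E\geq c\,\rho v$. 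A correct ``easy half'' instead goes through the eigenvalue: the equilibrium potential of the capacitor $\left(B(x,R),B(x,2R)^{c}\right)$ is an admissible test function in $(\ref{ldef})$, giving $\lambda(x,2R)^{-1}\geq\rho(x,R,2R)V(x,R)\geq c\,\rho(x,R,2R)v(x,R,2R)$ by $(VD)$, and then $\lambda^{-1}(A)\leq\overline{E}(A)$ together with $\overline{E}(x,2R)\leq CE(x,2R)$ (a consequence of $(TC)$) yields $E(x,2R)\geq c\,\rho v$. No pointwise control of $g^{B}(x,\cdot)$ is needed.

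For the upper bound, the inequality $g^{B(x,2R)}(x,z)\leq\rho(x,d(x,z),2R)$ that drives your dyadic decomposition is a \emph{maximum}-type bound over the sphere of radius $d(x,z)$. What the Dirichlet principle actually gives is a bound on the \emph{minimum} of $g^{B(x,2R)}(x,\cdot)$ over that sphere by the annulus resistance; upgrading the minimum to the maximum is exactly where the elliptic Harnack inequality would be invoked, and $(H)$ is not among the hypotheses here (indeed, in Section \ref{stwoside} it appears as an additional assumption, not a consequence of $(VD)+(TC)$). You correctly flag the remaining danger of circularity in summing the dyadic series---controlling $\sum_{k}\rho(x,r_{k+1},r_{k})\,v(x,r_{k+1},r_{k})$ by the top term is essentially the Einstein relation at all smaller scales---but you resolve it only by ``citing the resistance estimates from \cite{ter}'', which is no more than what the paper itself does. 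So the hard direction $E(x,2R)\leq C\rho(x,R,2R)v(x,R,2R)$ remains unproved in your write-up, and the two concrete Green-kernel inequalities you do assert are false or unjustified in this generality.
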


For the proof see \cite{ter}.

\begin{lemma}
\label{lmarkov}On all $\left( \Gamma ,\mu \right) $ for $\ $any $x\in \Gamma
,R>S>0$%
\begin{equation*}
E\left( x,R+S\right) \geq E\left( x,R\right) +\min_{y\in S\left( x,R\right)
}E\left( y,S\right) .
\end{equation*}
\end{lemma}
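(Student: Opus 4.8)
The plan is to use the strong Markov property at the exit time from the smaller ball $B(x,R)$. Fix $x\in\Gamma$ and $R>S>0$, and write $T=T_{B(x,R)}$ for the exit time from the inner ball and $T'=T_{B(x,R+S)}$ for the exit time from the larger ball. Since $B(x,R)\subset B(x,R+S)$, the walk must leave $B(x,R)$ before it can leave $B(x,R+S)$, so $T\le T'$ almost surely, and we may decompose $T'=T+(T'-T)$. The first term contributes $E(x,R)=\mathbb{E}(T\mid X_0=x)$ to the expectation. For the second term, I would condition on $\mathcal{F}_T$ and use the strong Markov property: on the event that the walk is at position $X_T=z$ at time $T$, the remaining time $T'-T$ needed to leave $B(x,R+S)$ is at least the time needed to leave $B(z,S)$, because $B(z,S)\subseteq B(x,R+S)$ whenever $d(x,z)\le R$ (here one uses that $X_T\in\overline{B(x,R)}$, so $d(x,X_T)\le R$ up to the $\pm 1$ lattice effect, which is harmless or can be absorbed — see the remark below).

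Carrying this out: by the strong Markov property,
\begin{equation*}
\mathbb{E}(T'-T\mid \mathcal{F}_T)\ge \mathbb{E}_{X_T}\big(T_{B(X_T,S)}\big)=E(X_T,S)\ge \min_{y\in \overline{B(x,R)}}E(y,S),
\end{equation*}
and since $X_T$ necessarily lies on the boundary sphere $S(x,R)$ (the first vertex outside $B(x,R)$ is adjacent to a vertex of $B(x,R)$), one gets $E(X_T,S)\ge \min_{y\in S(x,R)}E(y,S)$. Taking expectations and adding the contribution $E(x,R)$ from the first term yields
\begin{equation*}
E(x,R+S)=\mathbb{E}(T')\ge E(x,R)+\min_{y\in S(x,R)}E(y,S),
\end{equation*}
which is exactly the claimed inequality.

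The only delicate point — and the one I would be most careful about — is the geometric containment $B(z,S)\subseteq B(x,R+S)$ for $z=X_T$, which rests on the triangle inequality $d(x,w)\le d(x,z)+d(z,w)<R+S$ for $w\in B(z,S)$; this needs $d(x,z)\le R$, i.e. that the exit point lies at distance at most $R$ from $x$. On a graph, $X_T$ is the first vertex with $d(x,X_T)\ge R$, but since it is reached in one step from a vertex at distance $<R$, we have $d(x,X_T)\le R$; combined with $d(x,X_T)\ge R$ this forces $d(x,X_T)=R$, so in fact $X_T\in S(x,R):=\{y:d(x,y)=R\}$ and the containment is clean (with the convention that $S(x,R)$ is taken with the graph metric, consistent with the statement). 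No doubling, $(p_0)$, or $(TC)$ assumption is needed — this is a pure strong-Markov/pathwise argument valid on all weighted graphs, as the lemma asserts.
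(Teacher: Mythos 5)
Your proposal is correct and follows essentially the same route as the paper: decompose the exit time from $B(x,R+S)$ at the exit time of the inner ball $B(x,R)$, apply the strong Markov property, and use the triangle-inequality containment $B(X_T,S)\subset B(x,R+S)$ together with the fact that the exit point lies on the sphere $S(x,R)$. Your careful remark that $d(x,X_T)=R$ exactly (since the open ball is left in a single step) is precisely the observation that makes the paper's containment clean, so nothing is missing.
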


\begin{proof}
Let us denote $A=B\left( x,R\right) ,B=B\left( x,R+S\right) $. First let us
observe that from the triangular inequality it follows that for any $y\in
S\left( x,R\right) $
\begin{equation*}
B\left( y,S\right) \subset B\left( x,R+S\right) .
\end{equation*}%
From this and from the strong Markov property one obtains that%
\begin{eqnarray*}
E\left( x,R+S\right) &=&E_{x}\left( T_{B}+E_{X_{T_{B}}}\left( x,R+S\right)
\right) \\
&\geq &E\left( x,R\right) +E_{x}\left( E_{X_{T_{B}}}\left(
X_{T_{B}},S\right) \right) .
\end{eqnarray*}%
But $X_{T_{B}}\in S\left( x,R\right) $ which gives the statement.
\end{proof}

\begin{corollary}
\label{cmonot}The mean exit time is strictly increasing in $R\in
%TCIMACRO{\U{2115} }%
%BeginExpansion
\mathbb{N}
%EndExpansion
$ and hence $E\left( x,R\right) $ has an inverse in the second variable
\begin{equation*}
e\left( x,n\right) =\min \left\{ R\in
%TCIMACRO{\U{2115} }%
%BeginExpansion
\mathbb{N}
%EndExpansion
:E\left( x,R\right) \geq n\right\} .
\end{equation*}
\end{corollary}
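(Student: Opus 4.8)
The statement to prove is Corollary \ref{cmonot}: that the mean exit time $E(x,R)$ is strictly increasing in $R \in \mathbb{N}$, and hence has an inverse in the second variable.

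\medskip

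The plan is to derive strict monotonicity directly from Lemma \ref{lmarkov}, which has just been established. Applying that lemma with $S = 1$ gives, for any $x \in \Gamma$ and $R \geq 1$,
\begin{equation*}
E(x, R+1) \geq E(x, R) + \min_{y \in S(x,R)} E(y, 1),
\end{equation*}
so it suffices to show that $E(y,1) > 0$ for every vertex $y$, and in fact that this quantity is bounded below (or at least strictly positive) so the increments are genuinely positive. Since $B(y,1) = \{y\}$ is nonempty and the walk started at $y$ must take at least one step to leave it, we have $T_{B(y,1)} \geq 1$ almost surely, hence $E(y,1) = \mathbb{E}(T_{B(y,1)} \mid X_0 = y) \geq 1 > 0$. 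Therefore $E(x,R+1) \geq E(x,R) + 1 > E(x,R)$, which is strict monotonicity in $R$. One should also note $E(x,0) = 0$ (the ball $B(x,0)$ is empty, so the walk has already "exited"), which anchors the induction and guarantees $E(x,R) \geq R$, so in particular $E(x,R) \to \infty$.

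\medskip

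Given strict monotonicity together with $E(x,0)=0$ and $E(x,R)\to\infty$ as $R\to\infty$, the inverse function is well defined: for each $n$ the set $\{R \in \mathbb{N} : E(x,R) \geq n\}$ is nonempty (it contains all sufficiently large $R$) and bounded below, so it has a least element, and we set $e(x,n) = \min\{R \in \mathbb{N} : E(x,R) \geq n\}$ exactly as in the statement. Strict monotonicity ensures this is a genuine left inverse in the appropriate discrete sense, i.e. $E(x, e(x,n)) \geq n$ and $E(x, e(x,n)-1) < n$.

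\medskip

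I do not anticipate a serious obstacle here: the entire content is packaged into Lemma \ref{lmarkov}, and the only small point requiring care is the trivial lower bound $E(y,1) \geq 1$, which follows immediately from the definition of the exit time since one cannot leave a single-vertex ball in zero steps. The mildly delicate bookkeeping is the boundary case $R=0$ and making sure the definition of $e(x,n)$ is consistent with $n=0$ (where $e(x,0)=0$); this is routine. The proof is therefore essentially two lines once Lemma \ref{lmarkov} is in hand.
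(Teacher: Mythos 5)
Your proof is correct and follows essentially the same route as the paper: apply Lemma \ref{lmarkov} with $S=1$ and use $E(y,1)\geq 1$ to obtain $E(x,R+1)\geq E(x,R)+1$, from which strict monotonicity and the existence of the inverse follow. The extra bookkeeping you supply about $R=0$ and the well-definedness of the minimum is harmless elaboration of the same argument.
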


\begin{proof}
From Lemma \ref{lmarkov} and $E\left( x,1\right) \geq 1$ it follows that
\begin{equation}
E\left( x,R+1\right) \geq E\left( x,R\right) +1.  \label{e>e+1}
\end{equation}
\end{proof}

\begin{definition}
We shall say that the mean exit time has the anti doubling property if there
is an $A_{E}>1$ such that for all $x\in \Gamma ,R>0$
\begin{equation}
E(x,A_{E}R)\geq 2E(x,R).  \label{aTD1}
\end{equation}
\end{definition}

\begin{proposition}
\label{pPD3E}If $\left( p_{0}\right) $\ and $\left( TC\right) $ hold then $%
\left( \ref{aTD1}\right) ,$ anti-doubling for the mean exit time holds.
\end{proposition}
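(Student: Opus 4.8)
The plan is to force a genuine multiplicative gain of the form $E(x,3R)\ge qE(x,R)$ with some fixed $q>1$, by iterating Lemma \ref{lmarkov} after bounding from below the minimum it produces, using the time comparison principle; a finite number of iterations then upgrades $q$ to $2$. Throughout I would use $(p_{0})$ only lightly: by Remark \ref{re<1/p} it makes every $E(x,R)$ finite, so the estimates below are between finite numbers, and $\Gamma$ is locally finite (and infinite and connected), so all the spheres $S(x,n)$, $n\ge 0$, are non-empty; also it is clearly enough to prove $(\ref{aTD1})$ for positive integer $R$.

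Fix $x\in\Gamma$ and $R\ge 1$. First I would apply Lemma \ref{lmarkov} with the pair of radii $2R$ and $R$ (legitimate since $2R>R>0$), obtaining
\begin{equation*}
E(x,3R)\ \ge\ E(x,2R)+\min_{y\in S(x,2R)}E(y,R).
\end{equation*}
The step requiring care is to bound $E(y,R)$ from below, uniformly over $y\in S(x,2R)$. Such a $y$ sits at distance $2R$ from $x$, which is too far to be compared with $x$ in one use of the raw form of $(TC)$, but it does lie in the slightly larger ball $B(x,2R+1)$; so the scale-invariant form $(\ref{tcbeta})$ of $(TC)$, applied with the radii $2R+1\ge R$, gives
\begin{equation*}
E(y,R)\ \ge\ \frac{1}{C}\Bigl(\frac{R}{2R+1}\Bigr)^{\beta}E(x,2R+1)\ \ge\ \frac{1}{C\,3^{\beta}}\,E(x,2R),
\end{equation*}
where in the last step I used $\frac{2R+1}{R}\le 3$ (valid since $R\ge 1$) and the monotonicity of $E(x,\cdot)$ from Corollary \ref{cmonot}. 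Hence $\min_{y\in S(x,2R)}E(y,R)\ge (C3^{\beta})^{-1}E(x,2R)$, and feeding this back, together with $E(x,2R)\ge E(x,R)$,
\begin{equation*}
E(x,3R)\ \ge\ \bigl(1+(C3^{\beta})^{-1}\bigr)E(x,2R)\ \ge\ \bigl(1+(C3^{\beta})^{-1}\bigr)E(x,R).
\end{equation*}

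To conclude, put $q=1+(C3^{\beta})^{-1}>1$. Since the inequality just obtained holds for every $x$ and every $R\ge 1$, and the radius only grows under iteration, I get $E(x,3^{m}R)\ge q^{m}E(x,R)$ for every $m$; choosing $m$ with $q^{m}\ge 2$ and setting $A_{E}=3^{m}$ yields $E(x,A_{E}R)\ge 2E(x,R)$ for all $x\in\Gamma$, $R\ge 1$, which is $(\ref{aTD1})$. The main obstacle is precisely the middle step: the sphere point $y$ lies at distance $2R$ from the centre $x$, so some device is needed to compare $E(y,R)$ with the mean exit times at $x$ — here, enlarging the ball to $B(x,2R+1)$ and invoking the polynomial form $(\ref{tcbeta})$ of the time comparison principle; an equivalent alternative is to relay through a vertex on a geodesic from $x$ to $y$ and chain two instances of $(TC)$. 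Everything else, namely the reduction to integer $R$ and the geometric iteration, is routine.
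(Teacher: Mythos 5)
Your proof is correct, and it takes a genuinely different route from the paper: the paper's entire proof of Proposition \ref{pPD3E} is a deferral to the external reference \cite{ter}, asserting that the weak comparison $(\ref{pd2e})$ implies $(\ref{aTD1})$ and that $(TC)$ implies $(\ref{pd2e})$, whereas you give a self-contained argument using only tools already present in the paper. Your mechanism is sound: Lemma \ref{lmarkov} with radii $2R$ and $R$ gives $E(x,3R)\ge E(x,2R)+\min_{y\in S(x,2R)}E(y,R)$; the sphere $S(x,2R)$ sits inside $B(x,2R+1)$, so the scale-invariant form $(\ref{tcbeta})$ of $(TC)$ with radii $2R+1\ge R$ bounds the minimum below by $\left(C3^{\beta}\right)^{-1}E(x,2R)$; this yields $E(x,3R)\ge qE(x,R)$ with a fixed $q>1$, and iterating gives $(\ref{aTD1})$ with $A_{E}=3^{m}$. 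There is no hidden circularity, since $(\ref{tcbeta})$ is the upper-comparison direction obtained by chaining $(TC)$ alone and does not presuppose any anti-doubling. Two minor remarks. First, your reduction to integer $R$ is routine as claimed, but to cover non-integer $R\ge1$ literally you should enlarge the constant, e.g.\ take $A_{E}=2\cdot3^{m}$ so that $A_{E}R\ge 3^{m}\lceil R\rceil$ and monotonicity applies; for $R<1$ the statement degenerates under the paper's ball conventions, a point the paper itself glosses over. Second, what the two routes buy: yours makes the proposition independent of \cite{ter} (listed as ``in preparation''), while the paper's citation claims the strictly weaker hypothesis $(\ref{pd2e})$ already suffices — you use the full quantitative $(\ref{tcbeta})$, which is harmless here since $(TC)$ is assumed, but is a genuinely stronger input.
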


\begin{proof}
In \cite{ter} it is shown that $\left( \ref{pd2e}\right) $ implies $\left( %
\ref{aTD1}\right) $ but from $\left( TC\right) $ the inequality $\left( \ref%
{pd2e}\right) $ follows.
\end{proof}

\bigskip The anti-doubling property of the mean exit time is equivalent with
the existence of $c,\beta ^{\prime }>0$ such that%
\begin{equation}
\frac{E\left( x,R\right) }{E\left( x,S\right) }\geq c\left( \frac{R}{S}%
\right) ^{\beta ^{\prime }}  \label{adE}
\end{equation}%
for all, $R>S>0,x\in \Gamma ,y\in B\left( x,R\right) .$

Sometimes we will refer to the pair of doubling and anti-doubling property
as doubling properties. The properties of the inverse function $e$ of $E$
(which exists by Corollary \ref{cmonot} ) and properties of $E$ are linked
as the following evident lemma states.

\begin{lemma}
The following statements are equivalent \newline
\newline
1. There are $C,c>0,\beta \geq \beta ^{\prime }>0$ such that for all $x\in
\Gamma ,R\geq S>0,$ $y\in B\left( x,R\right) $%
\begin{equation}
c\left( \frac{R}{S}\right) ^{\beta ^{\prime }}\leq \frac{E\left( x,R\right)
}{E\left( y,S\right) }\leq C\left( \frac{R}{S}\right) ^{\beta },  \label{Eb}
\end{equation}%
2. There are $C,c>0,\beta \geq \beta ^{\prime }>0$ such that for all $x\in
\Gamma ,n\geq m>0,$ $y\in B\left( x,e\left( x,n\right) \right) $%
\begin{equation}
c\left( \frac{n}{m}\right) ^{1/\beta }\leq \frac{e\left( x,n\right) }{%
e\left( y,m\right) }\leq C\left( \frac{n}{m}\right) ^{1/\beta ^{\prime }}.
\label{eb}
\end{equation}
\end{lemma}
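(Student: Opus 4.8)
The plan is to prove the two implications $1.\Rightarrow 2.$ and $2.\Rightarrow 1.$ by directly translating polynomial two-sided bounds between a function and its generalized inverse. Observe first that since $e(x,n)$ is defined as the least $R$ with $E(x,R)\geq n$, we have the calibration identities $E\bigl(x,e(x,n)\bigr)\geq n$ and $E\bigl(x,e(x,n)-1\bigr)<n$; combined with $\left(\ref{e>e+1}\right)$ from Corollary \ref{cmonot}, these give $E\bigl(x,e(x,n)\bigr)\simeq n$ up to an absolute constant for $n\geq 1$, which is the bridge used throughout. Likewise $e\bigl(x,E(x,R)\bigr)\simeq R$.

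For $1.\Rightarrow 2.$: fix $x\in\Gamma$, $n\geq m>0$, and set $R=e(x,n)$, $S=e(y,m)$ for $y\in B(x,R)$. To get the upper bound in $\left(\ref{eb}\right)$ I would first note that $y\in B(x,R)$ with $E\bigl(x,R\bigr)\simeq n$, so $S=e(y,m)$ satisfies $E(y,S)\simeq m$. If $S\leq R$ we apply the upper half of $\left(\ref{Eb}\right)$ with the roles $x,R$ and $y,S$ to get $m/(Cn)\lesssim E(y,S)/E(x,R)\leq C(S/R)^{\beta'}$... — more carefully, one uses $\left(\ref{Eb}\right)$ in the direction that compares $E(x,R)$ to $E(y,S)$ and solves for $R/S$. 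Rearranging $c(R/S)^{\beta'}\leq E(x,R)/E(y,S)\simeq n/m$ yields $R/S\leq C'(n/m)^{1/\beta'}$, i.e. the right inequality of $\left(\ref{eb}\right)$; and $E(x,R)/E(y,S)\leq C(R/S)^{\beta}\simeq n/m$ rearranges to $R/S\geq c'(n/m)^{1/\beta}$, the left inequality. The case $S>R$ needs the symmetric application of $\left(\ref{Eb}\right)$ (swapping which point plays the center role), and one must check $y\in B(x,R)$ is enough to invoke it — here the volume/time comparison structure and the fact that $d(x,y)<R$ are what licenses the hypothesis "$y\in B(x,R)$'' in $\left(\ref{Eb}\right)$. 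The converse $2.\Rightarrow 1.$ is entirely parallel: given $R\geq S>0$ and $y\in B(x,R)$, put $n=E(x,R)$, $m=E(y,S)$, so $e(x,n)\simeq R$, $e(y,m)\simeq S$, and feed these into $\left(\ref{eb}\right)$, then rearrange the exponential inequalities to recover $\left(\ref{Eb}\right)$.

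The one genuinely delicate point — and the step I expect to be the main obstacle — is bookkeeping the quantifier "$y\in B(x,R)$'' versus "$y\in B(x,e(x,n))$'' consistently when one passes through the $\simeq$ in $E(x,e(x,n))\simeq n$. Because $e(x,n)$ and the "true" radius differ by a multiplicative constant, a ball $B(x,e(x,n))$ and a ball $B(x,R)$ with $E(x,R)=n$ are not literally the same set, so one cannot naively substitute. The fix is to use $\left(VD\right)$, the doubling of $E$ (via $\left(TD\right)$ and Proposition \ref{pPD3E}), and the volume comparison principle $\left(\ref{VC}\right)$ to absorb the constant-factor discrepancy in radii into the constants $C,c$ of $\left(\ref{eb}\right)$: enlarging a ball by a bounded factor changes $E$ and $V$ only by bounded factors, so a bound valid on $B(x,R)$ extends to $B(x,CR)$ at the cost of adjusting constants. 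Once this "constant-factor slack'' lemma is in place the rest is the routine algebra of inverting power-law bounds, and no further input beyond the already-established doubling/anti-doubling properties $\left(\ref{tdbeta}\right)$, $\left(\ref{adE}\right)$ and the monotonicity of $E$ is needed.
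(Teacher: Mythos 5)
The paper offers no proof at all for this lemma (it is labelled ``evident''), so your proposal is supplying an argument rather than competing with one; the inversion strategy you use is the right one and the core computation goes through. Two corrections, though. First, your ``calibration identity'' $E\left( x,e\left( x,n\right) \right) \simeq n$ does not follow from $\left( \ref{e>e+1}\right) $ as you claim: that inequality bounds the increments of $E$ from \emph{below}, which gives $E\left( x,e\left( x,n\right) \right) \geq n$ for free but says nothing about the needed upper bound $E\left( x,e\left( x,n\right) \right) \leq Cn$. The upper bound comes from the doubling of $E$ in the second variable, i.e.\ from the upper half of $\left( \ref{Eb}\right) $ itself applied with $y=x$, $S=e\left( x,n\right) -1$, using $E\left( x,e\left( x,n\right) -1\right) <n$; this is harmless in the direction $1.\Rightarrow 2.$ (where $\left( \ref{Eb}\right) $ is the hypothesis) but you should state it as such. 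Second, the ``genuinely delicate point'' you flag is not actually there, and the proposed fix via $\left( VD\right) $ and $\left( \ref{VC}\right) $ is out of place: volume plays no role in this lemma. Since $e\left( x,n\right) =\min \left\{ R:E\left( x,R\right) \geq n\right\} $ and $E\left( x,\cdot \right) $ is strictly increasing, one has the \emph{exact} identity $e\left( x,E\left( x,R\right) \right) =R$; hence in $2.\Rightarrow 1.$ the substitution $n=E\left( x,R\right) $ makes $B\left( x,e\left( x,n\right) \right) $ literally equal to $B\left( x,R\right) $, and in $1.\Rightarrow 2.$ setting $R=e\left( x,n\right) $ makes the two quantifiers coincide verbatim. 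No ball enlargement or volume comparison is needed; the only constant-factor slack is the one in $E\left( x,e\left( x,n\right) \right) \leq Cn$, which is absorbed into $C,c$ exactly as you do elsewhere. Finally, in the case $S=e\left( y,m\right) >R=e\left( x,n\right) $ you should make explicit that the swapped application of $\left( \ref{Eb}\right) $ (legitimate since $d\left( x,y\right) <R<S$ puts $x\in B\left( y,S\right) $) forces both $S/R$ and $n/m$ to be bounded by fixed constants, after which $\left( \ref{eb}\right) $ holds trivially by adjusting $C$ and $c$.
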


\begin{remark}
Let us recall that the doubling property and the anti-doubling property of $%
E $ is equivalent with the right and left hand side of \ref{Eb} for $y=x.$
\end{remark}

The following corollary is from \cite{ter}.

\begin{corollary}
\label{ce>2 copy(1)}\label{cb>2}If $\left( p_{0}\right) $,$(VD)$ and
\begin{equation}
\max_{y\in B\left( x,R\right) }E_{y}\left( x,R\right) \leq CE\left(
x,R\right)  \label{Ebar}
\end{equation}%
hold then%
\begin{equation}
E(x,R)\geq cR^{2},  \label{e>r2}
\end{equation}%
i.e.,
\begin{equation}
\beta \geq 2.  \label{bb>2}
\end{equation}
\end{corollary}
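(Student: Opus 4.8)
The claim to prove is Corollary~\ref{ce>2 copy(1)}: under $(p_0)$, $(VD)$ and the bound $\max_{y\in B(x,R)} E_y(x,R)\le CE(x,R)$, one has $E(x,R)\ge cR^2$, equivalently $\beta\ge 2$. The natural route is to combine the Einstein relation (Theorem~\ref{tLER}, or rather the easy lower inequality $E(x,2R)\gtrsim \rho(x,R,2R)v(x,R,2R)$ coming from it) with the universal geometric bound in Corollary~\ref{crav>l2}, namely $\rho(x,s,r)v(x,s,r)\ge (r-s)^2$. Applying the latter with $s=R,\ r=2R$ gives $\rho(x,R,2R)v(x,R,2R)\ge R^2$, and then the Einstein relation upgrades this to $E(x,2R)\gtrsim R^2$, i.e. $E(x,2R)\ge c(2R)^2$ up to adjusting the constant. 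Replacing $2R$ by $R$ (and using doubling of $E$ for the odd/in-between values, which is available since $(TC)$ is in force through $(p_0)+(VD)$ plus \eqref{Ebar}) yields $E(x,R)\ge cR^2$ for all $R$.

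The one subtlety is that Theorem~\ref{tLER} is stated under the hypotheses $(p_0),(VD),(TC)$, whereas the corollary only assumes $(p_0),(VD)$ and \eqref{Ebar}. So the first step I would take is to check that \eqref{Ebar} together with $(p_0)$ and $(VD)$ suffices for the \emph{lower} half of the Einstein relation, $E(x,2R)\ge c\,\rho(x,R,2R)v(x,R,2R)$ — which is the only direction needed here. This lower bound is the ``soft'' direction: a test function that is $1$ on $B(x,R)$, interpolates across the annulus realizing (up to a constant) the resistance $\rho(x,R,2R)$, and vanishes outside $B(x,2R)$, fed into a standard exit-time/Dirichlet-form estimate, produces $E(x,2R)\ge c\,\rho(x,R,2R)v(x,R,2R)$; the condition \eqref{Ebar} is exactly what is needed to control $E(x,2R)$ by the local quantities (this is how it is used in \cite{ter}). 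I would cite \cite{ter} for this refined version rather than re-deriving it.

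The remaining steps are routine: invoke Corollary~\ref{crav>l2} with $(s,r)=(R,2R)$ to get $\rho(x,R,2R)v(x,R,2R)\ge R^2$; chain the two inequalities to obtain $E(x,2R)\ge cR^2$; and finally pass from $2R$ to general $R\in\mathbb{N}$ using \eqref{e>e+1} (monotonicity, which gives $E(x,R)\ge E(x,2\lfloor R/2\rfloor)$ for $R\ge 2$) and the trivial bound $E(x,1)\ge 1$ for small $R$, absorbing everything into a single constant $c$. The equivalence with $\beta\ge 2$ is immediate from the definition \eqref{tcbeta}/\eqref{tdbeta} of $\beta$, since $E(x,R)/E(x,1)\le C R^\beta$ forces $\beta\ge 2$ once $E(x,R)\ge cR^2$.

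The main obstacle, then, is not a hard estimate but a bookkeeping point: confirming that the hypothesis actually invoked for the Einstein-type lower bound is \eqref{Ebar} rather than the full $(TC)$, so that the corollary's stated hypotheses are genuinely sufficient. Once that is settled by reference to \cite{ter}, the rest is a two-line chain of known inequalities. I would expect the author's proof to be essentially this, phrased as ``combine Theorem~\ref{tLER} (or its one-sided refinement in \cite{ter}) with Corollary~\ref{crav>l2}.''
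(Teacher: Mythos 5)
Your route is essentially the paper's: the corollary itself is simply cited from \cite{ter}, and the accompanying Remark \ref{re>r2} indicates exactly the chain you propose, namely the Einstein relation of Theorem \ref{tER1} combined with the universal annulus bound $\left( \ref{rv>2}\right) $ applied with $(s,r)=(R,2R)$, followed by the trivial passage from $2R$ to $R$. One minor slip worth noting: $\left( TC\right) $ does \emph{not} follow from $\left( p_{0}\right) +\left( VD\right) +\left( \ref{Ebar}\right) $ (the implication runs the other way, $\left( TC\right) \Rightarrow \left( \ref{Ebar}\right) $, as the paper's Remark \ref{re>r2} states), but this costs you nothing since your fallback via monotonicity $\left( \ref{e>e+1}\right) $ and $E\left( x,1\right) \geq 1$ handles the rescaling without any doubling of $E$.
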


\begin{remark}
\label{re>r2} In the present context we need a weaker statement, $\left(
p_{0}\right) +\left( VD\right) +\left( TC\right) \Longrightarrow \left( \ref%
{e>r2}\right) $,$\left( \ref{bb>2}\right) $. \ This immediately follows from
Theorem \ref{tER1} and $\left( \ref{rv>2}\right) $ and the fact that $\left(
TC\right) $ implies $\left( \ref{Ebar}\right) $.
\end{remark}

\begin{lemma}
\label{k> copy(1)} If $\left( p_{0}\right) $,$(VD)$ and $\left( TC\right) $
hold, then for$\ \underline{k}=\ \underline{k}(x,n,R)$ defined in $\left( %
\ref{iter}\right) $
\begin{equation}
\underline{k}+1\geq c\left( \frac{E(x,R)}{n}\right) ^{\frac{1}{\beta -1}}%
\text{\ }.  \label{k>e/n}
\end{equation}%
for all $x\in \Gamma ,R,n>0$ for fixed $c>0,\beta >1.$
\end{lemma}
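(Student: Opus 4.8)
The plan is to unwind the definition of $\underline{k}=\underline{k}(x,n,R)$ and compare it to the "smooth" quantity $\left(E(x,R)/n\right)^{1/(\beta-1)}$ using the polynomial doubling bounds for $E$ collected above. Recall that $\underline{k}$ is the largest integer $k\geq 1$ with $n/k \leq \min_{y\in B(x,R)} E(y,\lfloor R/k\rfloor)$; in particular, by maximality, the value $k=\underline{k}+1$ fails this inequality, i.e.
\begin{equation*}
\frac{n}{\underline{k}+1} > \min_{y\in B(x,R)} E\left(y,\left\lfloor \frac{R}{\underline{k}+1}\right\rfloor\right).
\end{equation*}
First I would dispose of the trivial regime: if $\lfloor R/(\underline{k}+1)\rfloor = 0$, i.e. $\underline{k}+1 > R$, then since $E(x,R)\le (1/p_0)^R$ by Remark~\ref{re<1/p} (and $n\ge 1$), one gets $\underline{k}+1 > R \ge c\log E(x,R) \ge c'(E(x,R)/n)^{1/(\beta-1)}$ for an appropriate constant once one also uses $E(x,R)\ge cR^2$ (Remark~\ref{re>r2}) to control the $\log$ against the power; I would handle this edge case separately and in fact it is cleaner to just note that in this regime $E(x,R)/n$ is bounded by a function of $R$ that is itself $\le C(\underline k+1)^{\beta-1}$ up to constants. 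So assume $m:=\lfloor R/(\underline{k}+1)\rfloor \geq 1$, hence $m \geq R/(2(\underline{k}+1))$ say (for $\underline k+1\le R$).

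The heart of the argument: apply the two-point polynomial comparison for $E$. Fix some $y_0\in B(x,R)$ realizing the minimum above (or within a factor $2$ of it). By $(TC)$ — equivalently $(\ref{tcbeta})$ — we have, since $y_0\in B(x,R)$ and $R\ge m$,
\begin{equation*}
E(x,R) \leq C\left(\frac{R}{m}\right)^{\beta} E(y_0,m) < C\left(\frac{R}{m}\right)^{\beta}\cdot \frac{n}{\underline{k}+1}.
\end{equation*}
Now substitute $m \geq R/(2(\underline{k}+1))$, so $R/m \leq 2(\underline{k}+1)$, giving
\begin{equation*}
E(x,R) < C\,2^{\beta}(\underline{k}+1)^{\beta}\cdot\frac{n}{\underline{k}+1} = C'\,(\underline{k}+1)^{\beta-1}\,n.
\end{equation*}
Dividing by $n$ and taking $(\beta-1)$-th roots (note $\beta>1$, so this is legitimate and monotone) yields $\underline{k}+1 \geq c\left(E(x,R)/n\right)^{1/(\beta-1)}$ with $c=(C')^{-1/(\beta-1)}$, which is exactly $(\ref{k>e/n})$.

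The main obstacle I anticipate is bookkeeping around the floor function and the degenerate cases $\underline{k}=0$ and $m=0$: one must be careful that $\lfloor R/(\underline k+1)\rfloor \ge cR/(\underline k+1)$ requires $\underline k+1 \le R$, and that when $\underline k = 0$ the claimed inequality still holds (here one needs that $n$ is large relative to $E(x,R)$, which follows from the definition failing even at $k=1$: $n > \min_y E(y,\lfloor R\rfloor)=\min_y E(y,R)\ge cE(x,R)$ by $(\ref{pd2e})$, so $E(x,R)/n \le C$ and $(E(x,R)/n)^{1/(\beta-1)}\le C' = C'\cdot 1 \le C'(\underline k+1)$). Everything else is just an application of the polynomial growth bound $(\ref{tcbeta})$, which is available under $(p_0),(VD),(TC)$ as recorded in the excerpt; no genuinely new estimate is needed.
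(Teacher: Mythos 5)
The paper offers no argument beyond ``the statement follows from $(TC)$ easily,'' so there is nothing to match your proof against line by line; your main computation is clearly the intended one. In the regime $\underline{k}+1\leq R$ it is correct and complete: maximality of $\underline{k}$ gives $n/(\underline{k}+1)>\min_{y\in B(x,R)}E\left(y,\left\lfloor R/(\underline{k}+1)\right\rfloor\right)$, the floor is controlled by $\lfloor t\rfloor\geq t/2$ for $t\geq 1$, and $\left(\ref{tcbeta}\right)$ (legitimate since the minimizing $y_{0}$ lies in $B(x,R)$) converts this into $E(x,R)\leq C(\underline{k}+1)^{\beta-1}n$; the case $\underline{k}=0$ is correctly disposed of via $\left(\ref{pd2e}\right)$, and $\beta>1$ (indeed $\beta\geq2$) comes from Remark \ref{re>r2}, which is where $(p_{0})$ and $(VD)$ enter.

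The gap is your treatment of the degenerate case $m=\left\lfloor R/(\underline{k}+1)\right\rfloor=0$. The chain $\underline{k}+1>R\geq c\log E(x,R)\geq c'\left(E(x,R)/n\right)^{1/(\beta-1)}$ is false --- a logarithm of $E$ cannot dominate a positive power of $E$ --- and the fallback you sketch, $E(x,R)/n\leq C(\underline{k}+1)^{\beta-1}$, is equally unavailable when $n$ is small. In fact the lemma as literally stated fails in this regime: on $\mathbb{Z}^{d}$ with $n=1$ one has $\underline{k}=\lfloor R\rfloor$ (the inequality $n/k\leq\min_{y}E(y,1)=1$ holds at $k=\lfloor R\rfloor$, while $E(y,0)=0$ kills every larger $k$), so $\underline{k}+1\simeq R$, whereas $\left(E(x,R)/n\right)^{1/(\beta-1)}\simeq R^{2}$. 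The degenerate case occurs exactly when $n\lesssim R$, and the correct repair is to impose $n\geq R$ --- the restriction $1\leq k\leq R\leq n$ that the paper does write in the parallel Proposition \ref{tDUExy}. Under $n\geq R$ the case $m=0$ forces $n\leq\lfloor R\rfloor/p_{0}$, and then $E(x,R)\leq CR^{\beta}/p_{0}$ from $\left(\ref{tcbeta}\right)$ with $S=1$ gives $\left(E(x,R)/n\right)^{1/(\beta-1)}\leq C'R\leq C'(\underline{k}+1)$ directly. This restriction is harmless in the only application, since $\mathbb{P}(T_{x,R}<n)=0$ for $n\leq R$ anyway. So: your main estimate is the right one, but the unrestricted statement should not be claimed, and the specific inequalities you propose for the edge case do not hold.
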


\begin{proof}
The statement follows from $\left( TC\right) $ easily, $\beta >1$ is ensured
by $\beta \geq 2$ from Corollary \ref{cb>2}.
\end{proof}

\subsection{The resolvent}

In this section we recall from \cite{GT2} the key intermediate step to prove
the diagonal upper estimate. We introduce for a finite set $A\subset \Gamma $
the $\lambda ,m-$resolvent
\begin{equation*}
G_{\lambda ,m}^{A}=\left( \left( \lambda +1\right) I-P^{A}\right) ^{-m}
\end{equation*}%
for $\lambda \geq 0,m\geq 0$ and let us define \ the kernel corresponding to
the resolvent as
\begin{equation*}
g_{\lambda ,m}^{A}\left( x,y\right) =\frac{1}{\mu \left( y\right) }%
G_{\lambda ,m}^{A}\left( x,y\right) .
\end{equation*}

\begin{theorem}
\label{tgl}Assuming $\left( p_{0}\right) ,(VD)$ and $\left( TC\right) $ the
condition $\left( \ref{g01}\right) $ implies, for a large enough $m>1$ and
for all $0<\lambda <1$, $x\in \Gamma ,$ the inequality $:$
\begin{equation}
g_{\lambda ,m}(x,x)\leq C\frac{\lambda ^{-m}}{V(x,e(x,\lambda ^{-1}))}.
\label{gllm}
\end{equation}
\end{theorem}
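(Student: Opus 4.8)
The goal is to bound the diagonal value of the resolvent kernel $g_{\lambda,m}(x,x)$ using the elementary Green's function bound $(\ref{g01})$. The natural route is to write $G_{\lambda,m}^A$ as an iterated integral (sum) of the one-step resolvent against itself, and to relate the one-step resolvent $G_{\lambda,1}^A = ((\lambda+1)I - P^A)^{-1}$ to the ordinary local Green operator $G^A = (I-P^A)^{-1}$ via the spectral identity $((\lambda+1)I-P^A)^{-1} = \sum_{k\ge 0}(\lambda+1)^{-(k+1)}(P^A)^k$, or equivalently via the resolvent formula expressing $G_{\lambda,1}$ as an exponentially time-weighted version of $G$. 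The key quantitative input is that $(\ref{g01})$, together with $(p_0)$, $(VD)$ and $(TC)$, controls $g^{B(x,R)}(x,x)$ by $CE(x,R)/\mu(x)$ in the relevant regime; I would choose $R = e(x,\lambda^{-1})$ so that $E(x,R)\simeq \lambda^{-1}$, which already produces the shape $\lambda^{-1}/V(x,e(x,\lambda^{-1}))$ after dividing by $\mu(x)$ and using $V(x,R)\simeq\mu(x)$ at bounded scales — but here $R$ may be large, so one needs the doubling properties to handle it.

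First I would set $A=\Gamma$ (or work on a large ball $B(x,R)$ and let $R\to\infty$, using monotone convergence of $G_k^{B(x,R)}$ to $G_{\lambda,k}$), and reduce the statement for general $m$ to iterating a bound for $m=1$. Concretely, $g_{\lambda,m}(x,x) = \sum_{z} g_{\lambda,m_1}(x,z)\,g_{\lambda,m_2}(z,x)\,\mu(z)$ for $m_1+m_2=m$, so by Cauchy--Schwarz and symmetry it suffices to control $\sum_z g_{\lambda,m'}(x,z)^2\mu(z) = g_{\lambda,2m'}(x,x)$, reducing everything to even $m$ and then to a single well-chosen $m$. The core estimate for $m=1$: split the sum defining $G_{\lambda,1}(x,x) = \sum_{k\ge 0}(\lambda+1)^{-(k+1)}P_k^A(x,x)$ into $k\le N$ and $k>N$ where $N\simeq\lambda^{-1}$. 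The tail $k>N$ contributes at most $(\lambda+1)^{-N}\sum_{k}P_k(x,x)/(\text{something})$ — here one must be careful since $\sum_k P_k(x,x)$ may diverge (non-recurrent case); instead, the tail is handled by the exponential weight $(\lambda+1)^{-k}\le e^{-c\lambda k}$ and the crude bound $P_k(x,x)\le 1$, giving geometric decay past $N$. The head $k\le N$ is $\le \sum_{k=0}^N P_k^A(x,x)$, which is essentially $G^{B}(x,x)$ for $B=B(x,e(x,N))$ up to controlling the truncation — and this is where $(\ref{g01})$ enters, giving $\le CE(x,e(x,N))/\mu(x)\simeq C\lambda^{-1}/\mu(x)$. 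Dividing by $\mu(x)$ and using $\mu(x)\simeq V(x,e(x,N))$... no — this last step fails for large scales, so instead one uses that $P_k^A(x,x)$ for $k$ of order $E(x,R)$ is already spread over $B(x,R)$, and the honest argument is to compare $\sum_{k=0}^{N}P_k^A(x,x)$ with $g^{B(x,e(x,N))}(x,x)$ by a standard cutting argument: exiting $B(x,e(x,N))$ before time $N$ has small probability by $(\ref{LT})$ of Proposition \ref{tLDLExy} (whose hypothesis $(\ref{ebar1})$ follows from $(TC)$), so the truncated sum and the full local Green function differ by a bounded factor.

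The main obstacle I expect is the passage from the recurrence-style argument of \cite{GT2}/\cite{TD} (where strong recurrence makes $g^B(x,x)\simeq E(x,R)/\mu(x)$ directly usable) to the present general (possibly transient) setting: one must ensure that the exponential cutoff $(\lambda+1)^{-k}$ in the resolvent plays exactly the role that the ball-restriction $B(x,e(x,\lambda^{-1}))$ plays, so that the effective "time horizon" $\lambda^{-1}$ matches the spatial scale $e(x,\lambda^{-1})$ through the relation $E(x,e(x,\lambda^{-1}))\simeq\lambda^{-1}$. Making this matching rigorous requires $(TC)$ to transfer the bound $(\ref{g01})$ (which is anchored at the center $x$ with radius $R$) across the annuli visited by the walk up to time $\lambda^{-1}$, and requires the doubling/anti-doubling properties $(\ref{aTD1})$, $(\ref{adE})$ to absorb the constants $C'$ appearing in comparing $E(x,R)$ at comparable radii. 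Once the $m=1$ bound $g_{\lambda,1}(x,x)\le C\lambda^{-1}/V(x,e(x,\lambda^{-1}))$ is established in this way, the general-$m$ case follows by the iteration/Cauchy--Schwarz scheme above together with the observation that the map $n\mapsto n/V(x,e(x,n))$ behaves polynomially under the doubling properties, so that composing $m$ copies of the one-step bound still yields the claimed $\lambda^{-m}/V(x,e(x,\lambda^{-1}))$ up to a constant depending on $m$.
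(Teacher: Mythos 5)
The paper gives no proof of Theorem \ref{tgl}: it defers entirely to \cite[Theorem 5.7]{GT2}, with the instruction to replace $R^{\beta }$ by $E\left( x,R\right) $. So the comparison is really between your sketch and that argument, and your sketch has a fatal gap: the reduction to $m=1$ cannot work, because the inequality $g_{\lambda ,1}(x,x)\leq C\lambda ^{-1}/V(x,e(x,\lambda ^{-1}))$ is false in general. Take $\Gamma =\mathbb{Z}^{3}$ with unit weights, which satisfies $\left( p_{0}\right) $, $(VD)$, $\left( TC\right) $ and $\left( \ref{g01}\right) $. There $g_{\lambda ,1}(x,x)=\frac{1}{\mu (x)}\sum_{k\geq 0}(\lambda +1)^{-(k+1)}P_{k}(x,x)\rightarrow g(x,x)\in (0,\infty )$ as $\lambda \rightarrow 0$ by transience, whereas the claimed right-hand side is of order $\lambda ^{-1}/V(x,\lambda ^{-1/2})\simeq \lambda ^{1/2}\rightarrow 0$. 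The clause ``for a large enough $m$'' in the statement is not a convenience but the substance of the theorem: in $\mathbb{Z}^{d}$ the bound holds exactly when $m>d/2=\alpha /\beta $, and the role of the high power of the resolvent is to smooth the kernel enough that the purely off-diagonal information in $\left( \ref{g01}\right) $ (which controls $g^{B}(y,x)$ only for $d\left( x,y\right) >0$ and degenerates on the diagonal) can be converted into an on-diagonal bound. Your application of $\left( \ref{g01}\right) $ to the ``head'' $\sum_{k\leq N}P_{k}(x,x)$ runs into exactly this: at best it yields $C\lambda ^{-1}/\mu (x)$, and the gap between $\mu (x)$ and $V(x,e(x,\lambda ^{-1}))$ in the denominator is precisely what cannot be closed at $m=1$.

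The second, related, problem is that your iteration runs in the wrong direction. The composition identity $g_{\lambda ,m_{1}+m_{2}}(x,x)=\sum_{z}g_{\lambda ,m_{1}}(x,z)g_{\lambda ,m_{2}}(z,x)\mu (z)$ cannot be estimated from diagonal bounds on the factors, and Cauchy--Schwarz reduces it to $g_{\lambda ,2m_{i}}(x,x)$, i.e.\ to \emph{larger} $m$, never down to $m=1$. The only cheap identity available is the total mass $\sum_{y}G_{\lambda ,m}(x,y)=\lambda ^{-m}$, which combined with Cauchy--Schwarz gives $g_{\lambda ,2m}(x,x)\leq \lambda ^{-m}\sup_{y}g_{\lambda ,m}(x,y)$; so the actual work, as in \cite[Theorem 5.7]{GT2}, is an inductive \emph{off-diagonal} estimate on $g_{\lambda ,m}(x,y)$, obtained by splitting the convolution over dyadic annuli around $x$, applying $\left( \ref{g01}\right) $ on each annulus (after localizing the resolvent to $B(x,Ce(x,\lambda ^{-1}))$ by the exit-time cutoff --- the one part of your sketch that is on target), and using the $\ell ^{1}$ mass to sum; the resulting series over annuli converges only when $m$ exceeds a threshold determined by the volume and time exponents $\alpha $ and $\beta $. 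Without that off-diagonal induction and the largeness of $m$, the argument does not close.
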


The proof closely follows the corresponding proof of \cite[Theorem 5.7]{GT2}
so we omit it. One should reproduce it simply replacing the space-time scale
function $R^{\beta }$ by $E\left( x,R\right) $ and using the doubling
properties repeatedly.

\subsection{The local diagonal upper estimate}

We start with the application of the $\lambda,m-$resolvent bound to obtain
the local diagonal upper estimate.

\begin{theorem}
\label{tG-->LDUE}The conditions $\left( p_{0}\right) ,\left( VD\right)
,\left( TC\right) $ and $\left( \ref{gllm}\right) $ imply $\left( DUE\right)
,$
\begin{equation*}
p_{n}(x,x)\leq \frac{C}{V(x,e(x,n))}.
\end{equation*}
\end{theorem}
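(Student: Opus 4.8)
The plan is to pass from the resolvent estimate $(\ref{gllm})$ to the on-diagonal heat kernel bound $(DUE)$ by a Tauberian-type argument, exploiting the spectral representation of the killed semigroup. Fix $x\in\Gamma$ and $n>0$, and set $B=B(x,R)$ for a radius $R$ to be chosen so that $E(x,R)\simeq n$; equivalently $R=e(x,n)$. The first step is to compare $p_n(x,x)$ with $p^B_n(x,x)$: by the exit time estimate $(\ref{LT})$ from Proposition \ref{tLDLExy} (which applies since $(TC)$ implies $(\ref{ebar1})$), the walk started at $x$ is unlikely to leave $B(x,CR)$ before time $n$ for a suitable constant $C$, so $p_n(x,x)\le p^{B'}_n(x,x)+(\text{small error})$ with $B'=B(x,CR)$, and it suffices to bound $p^{B'}_n(x,x)$. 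Replacing $R$ by $CR$ changes $e(x,n)$ only by a constant factor thanks to the doubling properties of $E$, so I may as well bound $p^B_n(x,x)$ directly.

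Next I would use the eigenfunction expansion on the finite set $B$. Writing $\{\varphi_i\}$ for an orthonormal (in $\ell^2(\mu)$) basis of eigenfunctions of $P^B$ with eigenvalues $\nu_i\in(-1,1)$, one has
\begin{equation*}
p^B_n(x,x)=\sum_i \nu_i^{\,n}\,\varphi_i(x)^2\mu(x),\qquad
g^B_{\lambda,m}(x,x)=\sum_i \frac{\varphi_i(x)^2\mu(x)}{(\lambda+1-\nu_i)^m}.
\end{equation*}
The standard device is to discard the negative part of the spectrum using $\widetilde p_n=p_n+p_{n+1}$, or to note via $(p_0)$ that the walk is "lazy enough" (each vertex has holding probability $\ge p_0$) so that the relevant eigenvalues can be taken in $[0,1)$; this is exactly the kind of adjustment flagged in the preamble to Section \ref{sloc}. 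With $\nu_i\in[0,1)$, for any $\lambda>0$ the elementary inequality $\nu^n\le C_m\, n^{-m}\lambda^{-m}(\lambda+1-\nu)^{-m}\cdot(\text{const})$ — more precisely, comparing $\sup_{\nu\in[0,1)}\nu^n(\lambda+1-\nu)^m$ against its value — lets one bound $p^B_n(x,x)$ termwise by a constant times $\lambda^m g^B_{\lambda,m}(x,x)$ once $\lambda\simeq 1/n$. Choosing $\lambda=1/n$ and invoking Theorem \ref{tgl}'s conclusion $(\ref{gllm})$,
\begin{equation*}
p^B_n(x,x)\le C\,\lambda^m g^B_{\lambda,m}(x,x)\le C\,\lambda^m\cdot\frac{\lambda^{-m}}{V(x,e(x,\lambda^{-1}))}=\frac{C}{V(x,e(x,n))},
\end{equation*}
which is the desired estimate.

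The main obstacle I expect is the termwise comparison between $\nu^n$ and $(\lambda+1-\nu)^{-m}$: one needs that for the chosen $m$ (large, as permitted by Theorem \ref{tgl}) and $\lambda=1/n$, the function $\nu\mapsto \nu^n(\lambda+1-\nu)^m$ is bounded by $C n^m$ uniformly in $\nu\in[0,1)$, and this requires a careful calculus estimate (the maximizing $\nu$ is near $1-c/n$). A secondary technical point is the passage from the infinite-graph kernel $p_n(x,x)$ to the killed kernel $p^B_n(x,x)$ and keeping track that the error term from $(\ref{LT})$ is dominated by $C/V(x,e(x,n))$; here one uses that $\underline k(x,n,CR)$ is bounded below by a constant when $E(x,R)\simeq n$, giving an exponentially small error, together with the crude volume lower bound. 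Finally, one must make sure that all the radius adjustments ($R\mapsto CR$, $\lambda^{-1}\mapsto n$) only cost constants — this is guaranteed by $(VD)$, the doubling and anti-doubling properties of $E$ (Proposition \ref{pPD3E}), and the comparison $(\ref{eb})$ for $e$. None of these is deep, but assembling them cleanly is the bulk of the work.
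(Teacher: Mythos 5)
Your overall strategy---a Tauberian passage from the resolvent bound $(\ref{gllm})$ to $(DUE)$ with $\lambda \simeq 1/n$---is the right one; note the paper gives no proof of this theorem at all, referring instead to \cite[Theorem 6.1]{GT2}, whose argument is exactly of this type. The calculus at the heart of your argument is also essentially correct once a sign is fixed: what you need is $\sup_{\nu\in[0,1)}\nu^{n}\left(\lambda+1-\nu\right)^{m}\leq C_{m}\lambda^{m}$ for $\lambda=1/n$ (not ``bounded by $Cn^{m}$''), and this holds because the maximiser sits near $\nu=1-(m-1)/n$. However, two of your intermediate steps have genuine gaps. First, the reduction from $p_{n}(x,x)$ to the killed kernel $p_{n}^{B'}(x,x)$ does not work as described: the inequality that comes for free is $p_{n}^{B'}\leq p_{n}$, which is the wrong direction, and the first-exit decomposition $P_{n}(x,x)=P_{n}^{B'}(x,x)+\mathbb{E}_{x}\left[\mathbf{1}_{\{T_{B'}\leq n\}}P_{n-T_{B'}}(X_{T_{B'}},x)\right]$ requires a bound on $P_{n-k}(z,x)$ for $z\in\partial B'$, which is essentially the diagonal upper estimate you are trying to prove. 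Bounding that factor crudely by $1$ and invoking $(\ref{LT})$ gives an error $C\exp\left(-c\underline{k}\right)$ in which $\underline{k}$ is merely bounded below by a \emph{constant} when $R'\simeq e(x,n)$; that is a constant error, whereas the target $C/V(x,e(x,n))$ tends to zero. Second, $(p_{0})$ does not make the walk lazy: it is a condition on edges $x\sim y$ and the graph need not have loops ($\mathbb{Z}$ with unit weights satisfies $(p_{0})$ and the spectrum of $P$ is all of $[-1,1]$), so the negative part of the spectrum is not disposed of this way, and near $\nu=-1$ the termwise comparison with $\left(\lambda+1-\nu\right)^{-m}$ fails.

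Both problems disappear if you drop the ball and the eigenfunction expansion and instead expand the resolvent on the whole graph as a positive power series, $G_{\lambda,m}=\sum_{k\geq 0}\binom{k+m-1}{k}(1+\lambda)^{-(k+m)}P^{k}$. Since $p_{2k}(x,x)$ is non-increasing in $k$ (as $P_{2k}(x,x)$ is, up to normalisation, $\Vert P^{k}\delta_{x}\Vert _{\ell^{2}(\mu)}^{2}$ and $\Vert P\Vert \leq 1$), keeping only the even indices $k\in[n,2n]$ yields $g_{\lambda,m}(x,x)\geq c_{m}n^{m}p_{2n}(x,x)$ for $\lambda=1/n$, whence $p_{2n}(x,x)\leq Cn^{-m}g_{\lambda,m}(x,x)\leq C/V(x,e(x,n))$ by $(\ref{gllm})$; odd times follow from $p_{2n+1}(x,x)\leq p_{2n}(x,x)$ together with the doubling properties of $e$ and $V$. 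This is the route of \cite[Theorem 6.1]{GT2} that the paper invokes; it needs neither an exit-time estimate nor any control of the negative spectrum.
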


Again the proof is easy modification of \cite[Theorem 6.1]{GT2} therefore we
skip it.

\begin{lemma}
\label{lgs}If $\left( p_{0}\right) ,(VD)$ and $\left( TC\right) $ hold then
\begin{equation*}
\left( \ref{MVG}\right) \Leftrightarrow \left( MV\right) \Longrightarrow
\left( \ref{g01}\right) .
\end{equation*}
\end{lemma}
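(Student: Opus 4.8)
The plan is to prove the three implications separately. First I would establish $\left( \ref{MVG}\right) \Leftrightarrow \left( MV\right)$. The forward direction is essentially trivial: for fixed $y\notin B\left( x,R\right)$ (or more precisely taking the center of the smaller ball to be a point at distance $d$ from $y$), the function $z\mapsto g^{B}\left( y,z\right)$ is harmonic and non-negative on $B\left( x,d\right)$ since $d=d\left( x,y\right)>0$ means $y\notin B\left( x,d\right)$, and a mean value inequality for non-negative harmonic functions on the ball $B\left( x,d\right)$ is exactly $\left( \ref{MVG}\right)$ after one notes $V\left( x,d\right)$ in the denominator. For the converse, $\left( MV\right)\Rightarrow\left( \ref{MVG}\right)$, one must manufacture a general non-negative harmonic $u$ on an arbitrary ball from Green kernels; the standard device is to write $u$ on $B\left( x,R\right)$ via its boundary values against the harmonic (Poisson) kernel, and the harmonic measure/Poisson kernel is in turn expressible through $g^{B}$ on a slightly larger ball. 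Here $\left( p_{0}\right)$ and $\left( VD\right)$ (through $\left( \ref{V3}\right)$, the comparability $v\left( x,R,2R\right)\simeq V\left( x,R\right)$) are used to pass between $R$ and $2R$ without loss, and $\left( TC\right)$ is used only inasmuch as it guarantees the volume/time doubling bookkeeping goes through. I would expect this equivalence to be the routine part, cited in spirit from \cite{GT2} or \cite{TD}.

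The substantive implication is $\left( MV\right)\Longrightarrow\left( \ref{g01}\right)$, i.e.\ deriving the bound $g^{B}\left( y,x\right)\leq CE\left( x,R\right)/V\left( x,d\right)$. The plan is: fix $x$, $R$, and $y$ with $d=d\left( x,y\right)>0$, and consider the function $h\left( z\right)=g^{B}\left( y,z\right)$, harmonic and non-negative on $B\left( x,d\right)$ (again since $y\notin B\left( x,d\right)$). By the already-proved $\left( \ref{MVG}\right)$,
\begin{equation*}
g^{B}\left( y,x\right)\leq\frac{C}{V\left( x,d\right)}\sum_{z\in B\left( x,d\right)}g^{B}\left( y,z\right)\mu\left( z\right).
\end{equation*}
Now the sum $\sum_{z\in B\left( x,d\right)}g^{B}\left( y,z\right)\mu\left( z\right)=\sum_{z}G^{B}\left( y,z\right)=\sum_{z}G^{B}\left( z,y\right)$ over $z\in B\left( x,d\right)$ — using reversibility to swap arguments — is the expected total time the walk started from $y$ spends in $B\left( x,d\right)$ before exiting $B\left( x,R\right)$, which is at most the expected time the walk started from $y$ spends in all of $B\left( x,R\right)$ before exit, namely $E_{y}\left( B\left( x,R\right)\right)\leq\overline{E}\left( x,R\right)$. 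Then $\left( TC\right)$ (equivalently $\left( \ref{pd2e}\right)$, and the fact, noted in Remark \ref{re>r2}, that $\left( TC\right)$ implies $\left( \ref{Ebar}\right)$) gives $\overline{E}\left( x,R\right)\leq CE\left( x,R\right)$, yielding $\left( \ref{g01}\right)$.

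The main obstacle I anticipate is keeping the direction of the Green-kernel argument straight: $g^{B}\left( y,z\right)=\frac{1}{\mu(z)}G^{B}\left( y,z\right)$ is normalized in the second variable, whereas the occupation-time interpretation $\sum_z G^{B}\left( z,y\right)=E_y[\#\{\text{visits to }B(x,d)\}]$ is natural in the first; reversibility, $\mu\left( z\right)G^{B}\left( z,y\right)=\mu\left( y\right)G^{B}\left( y,z\right)$, must be invoked at exactly the right moment, and the factor $\mu\left( y\right)$ that appears — consistent with the $\mu(y)$ already present in $\left( \ref{MVG}\right)$ — must be tracked carefully. A secondary technical point is that $\left( \ref{MVG}\right)$ is stated with a ball $B\left( x,d\right)$ whose radius is the \emph{distance} $d$, not a free radius, so one should check that $d\leq R$ (else $g^{B}\equiv 0$ outside and there is nothing to prove, or $B(x,d)$ is not contained in $B(x,R)$ and the harmonicity/occupation-time estimate must be applied on $B(x,d)\cap B(x,R)$); this is a minor case distinction. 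Everything else is the doubling bookkeeping licensed by $\left( p_{0}\right),\left( VD\right),\left( TC\right)$.
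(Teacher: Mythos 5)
Your proposal follows essentially the same route as the paper: one direction of the equivalence by applying $\left( MV\right) $ to the harmonic Green kernel, the other by representing a non-negative harmonic function as a Green potential of a non-negative charge on a larger ball, and $\left( \ref{g01}\right) $ by applying the mean value inequality to $g^{B}\left( y,\cdot \right) $ on $B\left( x,d\right) $ and bounding the resulting sum by the expected occupation time, hence by $\overline{E}$ and then, via $\left( TC\right) $, by $CE\left( x,R\right) $. Two harmless slips: your direction labels in the equivalence are swapped relative to the arguments you describe, and the identity $\sum_{z}G^{B}\left( y,z\right) =\sum_{z}G^{B}\left( z,y\right) $ is neither true (reversibility gives $\mu \left( z\right) G^{B}\left( z,y\right) =\mu \left( y\right) G^{B}\left( y,z\right) $) nor needed, since $\sum_{z\in B\left( x,d\right) }G^{B}\left( y,z\right) $ is already the expected occupation time of $B\left( x,d\right) $ for the walk started at $y$.
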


\begin{proof}
First we show $\left( \ref{MVG}\right) \Longrightarrow \left( MV\right) .$
Denote $B=B\left( x,R\right) ,$ $U=B(x,2R).$

Let $u\geq 0$ a harmonic function on $B(x,R)$ \ and consider it's
representation:
\begin{equation*}
u\left( z\right) =\sum_{w\in U}g^{U}\left( z,w\right) \nu \left( w\right)
\end{equation*}%
which always exists with a $\nu \geq 0,\nu \in c_{0}\left( U\right) $ charge
(the standard construction can be reproduced following the proof of \cite[%
Lemma 10.2]{GT1}). \ Applying this decomposition and $\left( \ref{MVG}%
\right) $ to $u\left( x\right) $ the mean value inequality follows.
\begin{align*}
u\left( x\right) & =\sum_{w\in U}g^{U}\left( x,w\right) \nu \left( w\right)
\leq \frac{C}{V\left( x,R\right) }\sum_{w\in U}\sum_{z\in B}g^{U}\left(
z,w\right) \nu \left( w\right) \mu \left( z\right) \\
& =\frac{C}{V\left( x,R/2\right) }\sum_{z\in B}\sum_{w\in U}g^{U}\left(
z,w\right) \nu \left( w\right) \mu \left( z\right) \leq \frac{C}{V\left(
x,R\right) }\sum_{z\in B}u\left( z\right) \mu \left( z\right) .
\end{align*}%
\ \ The opposite implication $(MV)\Longrightarrow \left( \ref{MVG}\right) $
follows simply applying $(MV)$ to $u\left( x\right) =g^{U}\left( x,w\right) $%
. Finally $\left( MV\right) \Longrightarrow $ $\left( \ref{g01}\right) $ is
immediate. If $d=d\left( x,y\right) >R$ then $g^{B(x,R)}(x,y)=0$ and there
is nothing to prove. Otherwise, consider the function $u(z)=g^{B(x,2R)}(y,z)$%
. This function is non-negative and harmonic in the ball $B(x,d)$. Hence, by
$(MV),\left( VD\right) $ and $\left( TC\right) $%
\begin{equation*}
u(x)\leq \frac{C}{V(x,d)}\sum_{z\in B(x,d)}u(z)\mu (z)\leq \frac{C}{V(x,d)}%
\overline{E}(x,2R)\leq C\frac{E(x,R)}{V(x,d)}.
\end{equation*}%
Finally $(\ref{g01})$ follows from $g^{B(x,R)}\leq g^{B(x,2R)}$.
\end{proof}

\subsection{From $\left( DUE\right) $ to $\left( UE\right) $}

The proof is easy modification of the nice argument given in \cite{Gfk} for
the corresponding implication. \

\begin{lemma}
Let $r=\frac{1}{2}d\left( x,y\right) $ then
\begin{equation}
p_{2n}\left( x,y\right) \leq P_{x}\left( T_{x,r}<n\right) \max_{\substack{ %
n\leq k\leq 2n  \\ v\in \partial B\left( x,r\right) }}p_{k}\left( v,y\right)
+P_{y}\left( T_{y,r}<n\right) \max_{\substack{ n\leq k\leq 2n  \\ z\in
\partial B\left( y,r\right) }}p_{k}\left( z,x\right) .  \label{pcut}
\end{equation}
\end{lemma}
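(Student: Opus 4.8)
The plan is to decompose the path from $x$ to $y$ according to whether the walk, observed over $2n$ steps, has already left the ball $B(x,r)$ around its starting point or the ball $B(y,r)$ around its endpoint. Since $r = \tfrac12 d(x,y)$, the balls $B(x,r)$ and $B(y,r)$ are disjoint, so any trajectory from $x$ to $y$ of length $2n$ must exit at least one of them: either it leaves $B(x,r)$ by some time $\le 2n$, or it is still inside $B(x,r)$ at time $2n$. In the latter case, since the walk ends at $y \notin B(x,r)$, it cannot actually stay in $B(x,r)$ the whole time — wait, more carefully: if it is in $B(x,r)$ at every time up to $n$, then because the endpoint is $y$, by time $2n$ it must have crossed into the complement; by reversibility I can instead say it leaves $B(y,r)$ before time $n$. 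The cleanest way: at the midpoint time $n$, the walk is either outside $B(x,r)$ or inside it; if inside $B(x,r)$ then it is outside $B(y,r)$ (disjointness), and running the reversed walk from $y$ it must then have left $B(y,r)$ by time $n$.

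First I would write $p_{2n}(x,y) = \sum_z p_n(x,z)\, p_n(z,y)$ using the Chapman–Kolmogorov identity, but it is more convenient to condition on the first exit time $T_{x,r}$ from $B(x,r)$ and its location. Decompose: on the event $\{T_{x,r} < n\}$, apply the strong Markov property at the stopping time $T_{x,r}$ — the walk is then at some $v \in \partial B(x,r)$ at a time $\le n$, and from there it needs $k$ more steps with $n \le k \le 2n$ (renumbering) to land at $y$, contributing at most $P_x(T_{x,r}<n)\max_{n\le k\le 2n,\,v\in\partial B(x,r)} p_k(v,y)$. On the complementary event $\{T_{x,r}\ge n\}$, the walk stays in $B(x,r)$ through time $n$, hence $X_n \notin B(y,r)$; now use reversibility to view the last $n$ steps as a walk started at $y$, which must therefore have exited $B(y,r)$ by time $n$, contributing the symmetric term $P_y(T_{y,r}<n)\max_{n\le k\le 2n,\,z\in\partial B(y,r)} p_k(z,x)$. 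Adding the two bounds gives $(\ref{pcut})$.

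The one subtlety — and the step I expect to require the most care — is the bookkeeping of time indices when applying the strong Markov property at $T_{x,r}$: the exit happens at some random time $j = T_{x,r} \le n$, and the remaining number of steps to reach $y$ is $2n - j$, which lies between $n$ and $2n$, so the supremum over $n \le k \le 2n$ genuinely covers all cases; one also needs $p_{2n-j}(X_j, y) \le \max_{v \in \partial B(x,r)} p_{2n-j}(v,y)$ together with $X_j \in \partial B(x,r)$ (the walk moves by unit steps, so it exits onto the external boundary). The reversibility step requires noting $\mu(x) p_n(x,z) = \mu(z) p_n(z,x)$ cancels consistently, or equivalently that the statement is about $p_n$ which is already symmetrized appropriately in the intended use; in the displayed inequality $(\ref{pcut})$ everything is phrased directly in terms of $p_k$, so I would simply use $p_k(v,y) = \tfrac{\mu(y)}{\mu(v)} p_k(y,v)$ where needed and absorb nothing — the inequality as stated is exactly the union-bound-plus-Markov estimate described above.
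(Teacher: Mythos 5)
Your argument is correct and is essentially the paper's own proof, which is given only as a one-line reference to the first-exit decomposition from $x$ and from $y$ plus the Markov property (following Grigor'yan): you split on $\{T_{x,r}<n\}$, apply the strong Markov property on that event, and use disjointness of $B(x,r)$ and $B(y,r)$ together with reversibility to convert the complementary event into an exit of the reversed walk from $B(y,r)$. The only caveat is the usual off-by-one bookkeeping ($T_{x,r}<n$ versus $T_{x,r}\le n$ and whether $X_n$ lands exactly on the boundary), which the paper glosses over in exactly the same way.
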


\begin{proof}
The statement follows from the first exit decompositions staring from $x$
(and from $y$ respectively) and from the Markov property as in \cite{Gfk}.
\end{proof}

\begin{theorem}
\label{tldue-ue}$\left( p_{0}\right) +\left( VD\right) +\left( TC\right)
+\left( DUE\right) \Longrightarrow \left( UE\right) .$
\end{theorem}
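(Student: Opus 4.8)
The plan is to derive the off-diagonal upper estimate $\left(UE\right)$ from $\left(DUE\right)$ by a standard two-step procedure: first upgrade the on-diagonal bound to a near-diagonal bound $p_n(x,y)\le C/V(x,e(x,n))$ via Cauchy--Schwarz, then cut the walk at the midpoint sphere to gain the exponential factor. Concretely, I would first write $p_{2n}(x,x)\le C/V(x,e(x,2n))$ and use the semigroup identity $p_{2n}(x,y)=\sum_z p_n(x,z)p_n(z,y)\mu(z)\le\sqrt{p_{2n}(x,x)}\sqrt{p_{2n}(y,y)}$ together with $\left(VD\right)$ and $\left(TC\right)$ (which give $V(y,e(y,2n))\simeq V(x,e(x,2n))$ when $d(x,y)$ is controlled by $e(x,n)$) to obtain, for all $x,y$ and all $n$, the near-diagonal estimate
\begin{equation*}
p_{n}(x,y)\le \frac{C}{V(x,e(x,n))}.
\end{equation*}
This uses the volume comparison $\left(VC\right)$ and time comparison $\left(TC\right)$ to move the center of the ball from $y$ to $x$; the doubling properties of $E$ and the $\left(\mathbf{E}\right)$-type consequences of $\left(TC\right)$ handle the passage from $e(y,n)$ to $e(x,n)$.

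Next I would invoke the cut-off inequality $\left(\ref{pcut}\right)$: with $r=\tfrac12 d(x,y)$,
\begin{equation*}
p_{2n}(x,y)\le P_x\left(T_{x,r}<n\right)\max_{\substack{n\le k\le 2n\\ v\in\partial B(x,r)}}p_k(v,y)+P_y\left(T_{y,r}<n\right)\max_{\substack{n\le k\le 2n\\ z\in\partial B(y,r)}}p_k(z,x).
\end{equation*}
For the probability factors I would apply Proposition \ref{tLDLExy}, more precisely the exit-time tail bound $\left(\ref{LT}\right)$, $P_x\left(T_{x,r}<n\right)\le C\exp\left(-c\underline{k}(x,n,r)\right)$, whose hypothesis $\overline{E}(x,R)\le CE(x,R)$ is guaranteed here because $\left(TC\right)$ implies $\left(\ref{Ebar}\right)$. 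For the transition-probability factors I would use the near-diagonal estimate from the first step, noting that for $k$ between $n$ and $2n$ one has $p_k(v,y)\le C/V(v,e(v,k))\le C/V(x,e(x,n))$ after again applying $\left(VD\right)$ and $\left(TC\right)$ to compare the balls centered at $v\in\partial B(x,r)$ with the ball at $x$ (here one needs $e(x,n)\gtrsim r$ in the relevant regime, and in the opposite regime $r\gg e(x,n)$ the exponential term dominates and the bound is automatic).

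Finally I would convert the kernel lower bound $\underline{k}(x,n,r)$ into the stated exponent by Lemma \ref{k> copy(1)}, which gives $\underline{k}(x,n,r)+1\ge c\left(E(x,r)/n\right)^{1/(\beta-1)}$; since $r\simeq d(x,y)$ and $E(x,\cdot)$ has the doubling properties, $E(x,r)\simeq E(x,d(x,y))$, so $\exp\left(-c\underline{k}(x,n,r)\right)\le C\exp\left[-c\left(E(x,d(x,y))/n\right)^{1/(\beta-1)}\right]$. Combining the three displays and using $\widetilde{p}$-type parity adjustments ($2n$ versus $n$, which are absorbed by $\left(VD\right)$ and the doubling of $E$) yields $\left(UE\right)$. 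The main obstacle is the careful bookkeeping in the cut-off step: one must check that $V(v,e(v,k))$ for $v$ on the sphere $\partial B(x,r)$ and $n\le k\le 2n$ is comparable to $V(x,e(x,n))$ uniformly, splitting into the case $r\lesssim e(x,n)$ (where $\left(VC\right)$ and $\left(TC\right)$ apply directly) and the case $r\gg e(x,n)$ (where the probability factor is super-exponentially small and one absorbs everything into the exponential), and to make sure the constants in the exponent do not degrade through the iteration implicit in the definition of $\underline{k}$.
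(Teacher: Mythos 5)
Your proposal follows essentially the same route as the paper's proof: the midpoint cut-off $(\ref{pcut})$, the exit-time tail bound $(\ref{LT})$ converted into the stated exponent via $(\ref{k>e/n})$, and Cauchy--Schwarz applied to $(DUE)$ with the volume ratio between centers absorbed into the exponential (which is exactly the content of Lemma \ref{lvv} in the paper). One caution: your intermediate claim that $p_{n}(x,y)\leq C/V\left( x,e\left( x,n\right) \right)$ holds for \emph{all} $x,y,n$ is not true as stated, since $\sqrt{V\left( x,e\left( x,n\right) \right) /V\left( y,e\left( y,n\right) \right) }$ is only controlled polynomially in $E\left( x,d\left( x,y\right) \right) /n$ and must be traded against the negative exponential --- precisely the two-regime bookkeeping you describe afterwards --- so that display should be asserted only in the near-diagonal regime or with the $C_{\varepsilon }\exp \left[ \varepsilon C\left( E\left( x,d\left( x,y\right) \right) /n\right) ^{1/\left( \beta -1\right) }\right]$ correction factor.
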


\begin{lemma}
\label{lvv}If $\left( p_{0}\right) ,\left( VD\right) $ and $\left( TC\right)
$ hold then for all $\varepsilon >0$ there are $C_{\varepsilon },C>0$ such
that for all $k>0,y,z\in \Gamma ,r=d\left( y,z\right) $%
\begin{equation*}
\sqrt{\frac{V\left( y,e\left( y,k\right) \right) }{V\left( v,e\left(
z,k\right) \right) }}\leq C_{\varepsilon }\exp \varepsilon C\left( \frac{%
E\left( y,r\right) }{k}\right) ^{\frac{1}{\left( \beta -1\right) }}.
\end{equation*}
\end{lemma}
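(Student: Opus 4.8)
The plan is to deduce the claimed bound from the two-sided power estimate on the mean exit time (inequality \eqref{Eb}) together with the Gromov-type volume comparison inequality that follows from $(VD)$. First I would recall that under $\left(p_{0}\right),(VD),(TC)$ the mean exit time satisfies \eqref{Eb}, hence its inverse $e$ satisfies \eqref{eb}: there are $C,c>0$ and $\beta\ge\beta'>0$ with
\begin{equation*}
c\left(\tfrac{n}{m}\right)^{1/\beta}\le\frac{e\left(y,n\right)}{e\left(z,m\right)}\le C\left(\tfrac{n}{m}\right)^{1/\beta'}
\end{equation*}
whenever $z\in B\left(y,e\left(y,n\right)\right)$. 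The first step is therefore to reduce to the case $z\in B\left(y,e\left(y,k\right)\right)$, since otherwise $r=d(y,z)\ge e(y,k)$, which forces $E\left(y,r\right)\ge E\left(y,e\left(y,k\right)\right)\ge k$ (using monotonicity of $E$, Corollary \ref{cmonot}, and the definition of $e$); in that regime the exponential factor on the right is at least $\exp\varepsilon C$, and the left-hand side is controlled crudely because both volumes are comparable to the corresponding balls of radius $e$ once one compares centers — but this sub-case is better handled by the same chaining argument below, so I would just treat it uniformly.

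The core computation: write $R_{1}=e\left(y,k\right)$ and $R_{2}=e\left(z,k\right)$. Using $(VD)$ in its Gromov form, $\dfrac{V\left(y,R_{1}\right)}{V\left(z,R_{2}\right)}\le C\left(\dfrac{R_{1}+d(y,z)}{R_{2}}\right)^{\alpha}$ with $\alpha=\log_{2}D_{V}$; and by $(VD)$ again $V\left(z,R_{2}\right)\ge c\,V\left(y,R_{2}\right)\cdot(\text{something})$ after relating the two centers, but cleaner is to bound $\dfrac{V\left(y,R_{1}\right)}{V\left(z,R_{2}\right)}$ directly by a power of $\dfrac{R_{1}+r}{R_{2}}$ (volume comparison tolerates the center shift by $d(y,z)=r$). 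It remains to bound $\dfrac{R_{1}+r}{R_{2}}$. By \eqref{eb} with the roles of the two points, $R_{2}=e\left(z,k\right)\ge c\,e\left(y,k\right)\cdot(\text{correction})$; the essential point is that $R_{1}/R_{2}$ is bounded above by a constant (apply \eqref{eb} with $n=m=k$, after checking the admissibility condition on centers — this is the one delicate bookkeeping point, see below), while $r/R_{2}$ is bounded using that $E\left(z,r\right)\simeq E\left(y,r\right)$ (from $(TC)$, via \eqref{pd2e}) together with the power bound: $\dfrac{r}{R_{2}}=\dfrac{r}{e\left(z,k\right)}\le C\left(\dfrac{E\left(z,r\right)}{k}\right)^{1/\beta'}\le C'\left(\dfrac{E\left(y,r\right)}{k}\right)^{1/\beta'}$. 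Taking square roots and absorbing the polynomial $\left(1+\tfrac{r}{R_{2}}\right)^{\alpha/2}$ into an exponential via the elementary inequality $t^{p}\le C_{\varepsilon,p}\exp(\varepsilon t)$ for $t\ge0$ (applied with $t$ a fixed power of $\tfrac{E(y,r)}{k}$) yields exactly the stated form, after noting $\tfrac{1}{\beta'}$ can be replaced by $\tfrac{1}{\beta-1}$ in the exponent since $\beta\ge2$ gives $\beta-1\le\beta$ and $\beta-1\ge\beta/2\ge\dots$ — more carefully, one just uses $t^{1/\beta'}\le C_\varepsilon\exp(\varepsilon t^{1/(\beta-1)})$, valid because $\tfrac{1}{\beta-1}>0$ and any positive power of a nonnegative quantity is dominated by an exponential of any other positive power. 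Absorbing all constant ratios into $C_{\varepsilon}$ finishes the proof.

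The main obstacle I anticipate is the admissibility condition in \eqref{eb}: that estimate was stated only for $y$ in a ball around the "primary" center of the appropriate radius, so when comparing $e\left(y,k\right)$ and $e\left(z,k\right)$ one must verify $z\in B\left(y,e\left(y,k\right)\right)$ (or $y\in B\left(z,e\left(z,k\right)\right)$), which need not hold for arbitrary $y,z$. The fix is a dichotomy on $r=d(y,z)$ versus $e\left(y,k\right)$: if $r<e\left(y,k\right)$ the hypothesis of \eqref{eb} is met and the above goes through with the power factor $\left(1+\tfrac{r}{R_2}\right)^{\alpha/2}$ actually bounded by a constant; if $r\ge e\left(y,k\right)$, then as noted $E\left(y,r\right)\ge k$, so $\exp\varepsilon C\left(\tfrac{E(y,r)}{k}\right)^{1/(\beta-1)}\ge\exp(\varepsilon C)$ is bounded below by a constant, while the volume ratio is bounded by $C^{r}$-type estimates combined with $(VD)$ and the power bound \eqref{Eb} relating $r$ to $k$ through $E(y,r)$ — here one spends the exponential to kill the polynomial-in-$r$ growth, exactly as in the first case. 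Either way the constant $C$ appearing in the exponent and the constant $C_{\varepsilon}$ are independent of $k,y,z$, which is what is required.
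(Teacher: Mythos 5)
Your proposal is correct and follows essentially the same route as the paper's proof: both combine the Gromov power form of $\left( VD\right) $ with the two-sided bounds $\left( \ref{eb}\right) $ on the inverse function $e$, convert the distance $r$ into $E\left( y,r\right) $ via the inverse relation, and absorb the resulting power of $E\left( y,r\right) /k$ into the exponential. The only cosmetic difference is in handling the center shift: the paper enlarges the time from $k$ to $k+m$ with $m$ minimal such that $e\left( y,m\right) \geq r$, while you split on whether $r<e\left( y,k\right) $; both devices serve the same purpose.
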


\begin{proof}
Let us consider the minimal $m$ for which $e\left( y,m\right) \geq r,$%
\begin{equation*}
e\left( y,k\right) \leq e\left( y,k+m\right)
\end{equation*}%
and use the anti doubling property with $\beta ^{\prime }>0$ to obtain%
\begin{eqnarray*}
\sqrt{\frac{V\left( y,e\left( y,k\right) \right) }{V\left( z,e\left(
z,k\right) \right) }} &\leq &\sqrt{\frac{V\left( y,e\left( y,k+m\right)
\right) }{V\left( z,e\left( z,k\right) \right) }} \\
&\leq &C\left( \frac{e\left( y,k+m\right) }{e\left( z,k\right) }\right)
^{\alpha /2}\leq C\left( \frac{k+m}{k}\right) ^{\frac{\alpha }{2\beta
^{\prime }}}=C\left( 1+\frac{m-1+1}{k}\right) ^{\frac{\alpha }{2\beta
^{\prime }}} \\
&\leq &C\left( 1+\frac{E\left( y,r\right) +1}{k}\right) ^{\frac{\alpha }{%
2\beta ^{\prime }}} \\
&\leq &C_{\varepsilon }\exp \varepsilon C\left( \frac{E\left( y,r\right) }{k}%
\right) ^{\frac{1}{\left( \beta -1\right) }}.
\end{eqnarray*}%
Here we have to note that by Remark \ref{re>r2} it follows that $\beta >1$
furthermore from the conditions that $\alpha ,\beta ^{\prime }>0.$ The
manipulation of the exponents used the trivial estimate $1+x^{\frac{a}{a}%
}\leq \left( 1+x^{\frac{1}{a}}\right) ^{a},$ where $x,a>0$. As a result we
obtain by repeated application of $\left( TC\right) $ that
\begin{equation*}
\sqrt{\frac{V\left( y,e\left( y,k\right) +r\right) }{V\left( z,e\left(
z,k\right) \right) }}\leq C_{\varepsilon }\exp \varepsilon C\left( \frac{%
E\left( y,r\right) }{k}\right) ^{\frac{1}{\left( \beta -1\right) }}.
\end{equation*}
\end{proof}

\begin{proof}[Proof of Theorem \protect\ref{tldue-ue}]
If $d\left( x,y\right) \leq 2$ then the statement follows from $\left(
p_{0}\right) $ according to Remark \ref{re<1/p}. We use $\left( \ref{pcut}%
\right) $ with $r=\frac{1}{2}d\left( x,y\right) $ and start to handle the
first term in%
\begin{equation*}
P_{x}\left( T_{x,r}<n\right) \max_{\substack{ n\leq k\leq 2n  \\ v\in
\partial B\left( x,r\right) }}p_{k}\left( v,y\right) .
\end{equation*}%
Let us recall that from $\left( TC\right) $ it follows that
\begin{equation}
\mathbb{P}(T_{x,r}<n)\leq C\exp -c\underline{k}(x,n,r),
\end{equation}%
and use $r\leq d\left( v,y\right) \leq 3r$ furthermore $\left( \ref{k>e/n}%
\right) $ to get%
\begin{equation*}
P_{x}\left( T_{x,r}<n\right) \leq C\exp \left[ -c\left( \frac{E\left(
x,r\right) }{n}\right) ^{\frac{1}{\beta -1}}\right] .
\end{equation*}%
Let us treat the other term. First we observe that
\begin{eqnarray}
p_{2k+1}\left( y,v\right) &\leq &\sum_{z\sim y}P_{2k}\left( y,z\right)
P\left( z,v\right) \frac{1}{\mu \left( v\right) }  \label{2k+1} \\
&=&\sum_{z\sim y}P_{2k}\left( y,z\right) P\left( v,z\right) \frac{1}{\mu
\left( z\right) }  \notag \\
&\leq &\max_{z\sim y}p_{2k}\left( y,z\right) \sum_{z\sim v}P\left( v,z\right)
\notag \\
&=&\max_{z\sim y}p_{2k}\left( y,z\right) .  \notag
\end{eqnarray}%
and recall that
\begin{equation*}
p_{2k}\left( x,y\right) \leq \sqrt{p_{2k}\left( x,x\right) p_{2k}\left(
y,y\right) },
\end{equation*}%
which yields using the doubling properties of $V$, $E$ and for $w\sim v$ $%
d\left( y,v\right) \simeq d\left( y,w\right) $ (provided $v,w\neq y)$ that
\begin{eqnarray}
&&\max_{\substack{ n\leq k\leq 2n  \\ w\in \partial B\left( x,r\right) }}%
p_{k}\left( w,y\right)  \label{maxp} \\
&\leq &\max_{\substack{ n\leq 2k\leq 2n  \\ v\sim w\in \partial B\left(
x,r\right) }}p_{2k}\left( v,y\right) \\
&\leq &\max_{_{\substack{ n\leq 2k\leq 2n  \\ v\sim w\in \partial B\left(
x,r\right) }}}\frac{C}{\sqrt{V\left( y,e\left( y,2k\right) \right) V\left(
v,e\left( v,2k\right) \right) }} \\
&\leq &\max_{_{\substack{ n\leq 2k\leq 2n  \\ v\sim w\in \partial B\left(
x,r\right) }}}\frac{C}{V\left( y,e\left( y,n\right) \right) }\sqrt{\frac{%
V\left( y,e\left( y,n\right) \right) }{V\left( v,e\left( v,n\right) \right) }%
},  \notag
\end{eqnarray}%
Let us observe that $d\left( v,y\right) \leq 3r+2\leq 5r$ if $r\geq 1$ and
apply Lemma \ref{lvv} to proceed with
\begin{eqnarray*}
&&\max_{_{\substack{ n\leq 2k\leq 2n  \\ v\sim w\in \partial B\left(
x,r\right) }}}p_{2k}\left( v,y\right) P_{x}\left( T_{x,r}<n\right) \\
&\leq &\frac{C}{V\left( y,e\left( y,n\right) \right) }C_{\varepsilon }\exp %
\left[ \varepsilon C\left( \frac{E\left( y,5r\right) }{n}\right) ^{\frac{1}{%
\left( \beta -1\right) }}-c\left( \frac{E\left( x,r\right) }{n}\right) ^{%
\frac{1}{\left( \beta -1\right) }}\right] ,
\end{eqnarray*}%
choosing $\varepsilon $ small enough and applying $\left( TC\right) $ we
have the inequality
\begin{eqnarray}
&&\max_{\substack{ n\leq k\leq 2n  \\ v\in \partial B\left( x,r\right) }}%
p_{k}\left( v,y\right) \exp \left[ -c\left( \frac{E\left( x,d\left(
x,y\right) \right) }{n}\right) ^{\frac{1}{\left( \beta -1\right) }}\right]
\notag \\
&\leq &\frac{C}{V\left( y,e\left( y,n\right) \right) }\exp \left[ -c\left(
\frac{E\left( x,d\left( x,y\right) \right) }{n}\right) ^{\frac{1}{\left(
\beta -1\right) }}\right] .  \notag
\end{eqnarray}%
By symmetry one gets%
\begin{eqnarray*}
p_{2n}\left( x,y\right) &\leq &C\left( \frac{1}{V\left( x,e\left( x,n\right)
\right) }+\frac{1}{V\left( y,e\left( y,n\right) \right) }\right) \exp \left[
-c\left( \frac{E\left( x,d\left( x,y\right) \right) }{n}\right) ^{\frac{1}{%
\left( \beta -1\right) }}\right] \\
&=&\frac{C}{V\left( x,e\left( x,n\right) \right) }\left( 1+\frac{V\left(
x,e\left( x,n\right) \right) }{V\left( y,e\left( y,n\right) \right) }\right)
\exp \left[ -c\left( \frac{E\left( x,d\left( x,y\right) \right) }{n}\right)
^{\frac{1}{\left( \beta -1\right) }}\right] .
\end{eqnarray*}%
Now we use Lemma \ref{lvv} again obtain%
\begin{equation*}
\frac{V\left( x,e\left( x,n\right) \right) }{V\left( y,e\left( y,n\right)
\right) }\leq C_{\varepsilon }\exp \varepsilon C\left( \frac{E\left(
x,2r\right) }{n}\right) ^{\frac{1}{\left( \beta -1\right) }}
\end{equation*}%
and $\varepsilon $ can be chosen to satisfy $\varepsilon C<\frac{c}{2}$ to
receive%
\begin{equation*}
\left( 1+\exp \left[ \left( \varepsilon C-\frac{c}{2}\right) \left( \frac{%
E\left( x,d\left( x,y\right) \right) }{n}\right) ^{\frac{1}{\left( \beta
-1\right) }}\right] \right) \leq 2
\end{equation*}%
\begin{equation*}
p_{2n}\left( x,y\right) \leq \frac{2C}{V\left( x,e\left( x,n\right) \right) }%
\exp \left[ -\frac{c}{2}\left( \frac{E\left( x,d\left( x,y\right) \right) }{n%
}\right) ^{\frac{1}{\left( \beta -1\right) }}\right] ,
\end{equation*}%
which is the needed estimate for even $n.$ For odd number of steps the
results follows using for $x\neq y$ the trivial inequality $\left( \ref{2k+1}%
\right) $ and $d\left( x,y\right) \simeq d\left( x,z\right) $ if $z\neq
x,y\sim z.$ In particular if the maximum in $\left( \ref{2k+1}\right) $
attained at $x=z$ then the statement follows from $\left( DUE\right) $ and $%
\left( p_{0}\right) .$
\end{proof}

\begin{remark}
With a slight modification of the beginning of the proof one can get
\begin{equation}
p_{n}\left( x,y\right) \leq \frac{C\exp \left( -c\underline{k}\left(
y,n,r\right) \right) }{V\left( x,e\left( x,n\right) \right) }+\frac{C\exp
\left( -c\underline{k}\left( x,n,r\right) \right) }{V\left( y,e\left(
y,n\right) \right) }  \label{ue(k)}
\end{equation}%
where $r=\frac{1}{2}d\left( x,y\right) ,$ which is sharper then the above
upper estimate. Let us note that our deduction shows that $\left( \ref{ue(k)}%
\right) $ is equivalent with the upper estimate.
\end{remark}

\begin{proof}[Proof or Theorem \protect\ref{tLDUE+}]
Let us assume that the conditions $\left( p_{0}\right) ,$ $(VD),$ $\left(
TC\right) $ hold. From Theorem \ref{tgl} and \ref{tG-->LDUE} it follows that
$\left( \ref{g01}\right) \Longrightarrow \left( \ref{gllm}\right)
\Longrightarrow \left( DUE\right) ,$ which covers the implication $%
6.\Longrightarrow 1.$ From Lemma \ref{lgs} we know that $(MV)$ $%
\Longleftrightarrow $ $(\ref{MVG})$ and $\left( MV\right) \Longrightarrow
\left( \ref{g01}\right) $ i.e. $4.\Longleftrightarrow 5.\Longrightarrow 6.$
In Theorem \ref{tldue-ue} we have shown that $\left( DUE\right)
\Longrightarrow \left( UE\right) ,$ which means $1\Longrightarrow 2$ while
the reverse implication is trivial. The parabolic mean value inequality, $%
\left( \ref{PMV}\right) $ implies $(MV),$ \ i.e. $3\Longrightarrow 4$. \ It
is left to show $\ $that $2\Longrightarrow 3.$ i.e. $\left( UE\right)
\Longrightarrow \left( \ref{PMV}\right) $. We shall show a little bit more.\
\ Let us consider a Dirichlet solution $u_{i}\left( w\right) \geq $ $0$ on $%
B\left( x,R\right) $ with initial data $u_{0}\in c_{0}\left( B\left(
x,R\right) \right) $. Denote $E=E\left( x,R\right) .$ \ Consider $%
v_{i}\left( z\right) =\mu \left( z\right) u_{i}\left( z\right) $, $%
0<c_{1}<c_{2}<c_{3}<c_{4}$ and $n\in \left[ c_{3}E,c_{4}E\right] ,j\in \left[
c_{1}E,c_{2}E\right] $. By definition
\begin{equation*}
u_{n}\left( w\right) =\sum_{y\in \Gamma }P_{n-j}^{B}\left( w,y\right)
u_{j}\left( y\right)
\end{equation*}%
and
\begin{equation*}
v_{n}\left( w\right) =\sum_{y\in B\left( x,R\right) }P_{n-j}^{B}\left(
y,w\right) v_{j}\left( y\right) \leq \mu \left( w\right) \max\limits_{y\in
B}p_{n-j}^{B}\left( y,w\right) \sum_{y\in B\left( x,R\right) }v_{j}\left(
y\right) ,
\end{equation*}%
from which one has by $p^{B}\leq p$ and $\left( UE\right) $ that
\begin{equation*}
u_{n}\left( w\right) \leq \max\limits_{y\in B}p_{n-j}^{B}\left( w,y\right)
\sum_{y\in B\left( x,R\right) }u_{j}\left( y\right) \mu \left( y\right) \leq
\frac{C}{V\left( w,e\left( w,n-j\right) \right) }\sum_{y\in B\left(
x,R\right) }u_{j}\left( y\right) \mu \left( y\right) .
\end{equation*}%
Using the doubling properties of $e$ and $V$ it follows that $V\left(
w,e\left( w,n-j\right) \right) \simeq V\left( x,R\right) $ and
\begin{equation}
u_{n}\left( w\right) \leq \frac{C}{V\left( x,R\right) }\sum_{y\in B\left(
x,R\right) }u_{j}\left( y\right) \mu \left( y\right) .  \label{presum}
\end{equation}%
Finally summing $\left( \ref{presum}\right) $ for $j\in \left[ c_{1}E,c_{2}E%
\right] $ we obtain
\begin{equation*}
u_{n}\left( w\right) \leq \frac{C}{E\left( x,R\right) V\left( x,R\right) }%
\sum_{j=c_{1}E}^{c_{2}E}\sum_{y\in B\left( x,R\right) }u_{j}\left( y\right)
\mu \left( y\right) .
\end{equation*}%
This means that this inequality holds for all $\left( n,w\right) \in \lbrack
c_{3}E,c_{4}E]\times B\left( x,R\right) =\Psi $, e.g. using the properties
of $V$ and $E$ again, for all $y\in V\left( x,R\right) $%
\begin{equation}
\max\limits_{\Psi }u\leq \frac{C}{E\left( y,2R\right) V\left( y,2R\right) }%
\sum_{j=c_{1}E}^{c_{2}E}\sum_{y\in B\left( x,R\right) }u_{j}\left( y\right)
\mu \left( y\right) .  \label{smaxPMV}
\end{equation}%
also satisfied. \ It is clear that $\left( \ref{smaxPMV}\right) $ implies $%
\left( \ref{sPMV}\right) $ and $\left( \ref{PMV}\right) $ as well which
finishes the proof.
\end{proof}

\section{Semi-local theory}

\setcounter{equation}{0}\label{ssloc} This section is split into two parts.
\ In the first part the reformulation and extension of the upper estimates
are developed. \ In this part typically we work under the assumptions of $%
\left( VD\right) ,$ $\left( TD\right) $ and $\left( E\right) .$ \ In the
second part, in Section \ref{stwoside}, we discuss the two-sided estimate. \
There the main assumptions are $\left( VD\right) $,$\left( E\right) $ and
the elliptic Harnack inequality (to be defined there).

\subsection{The upper estimate}

Let us start with the definition of the $F$-parabolic mean value inequality.

\begin{definition}
We shall say that the $F-$parabolic mean value inequality holds if for the
function $F\left( R\right) =\inf_{x\in \Gamma }E\left( x,R\right)
,c_{_{2}}>c_{1}>0$ constants there is a $C>1$ such that for all $R>0,x\in
\Gamma $ for all non-negative Dirichlet solutions $u_{n}$ of the discrete
heat equation
\begin{equation*}
P^{B\left( x,R\right) }u_{n}=u_{n+1}
\end{equation*}%
on $\left[ 0,c_{2}E\left( x,R\right) \right] \times B\left( x,R\right) $%
\begin{equation}
u_{n}(x)\leq \frac{C}{V(x,2R)E\left( x,2R\right) }\sum_{i=c_{1}F}^{c_{2}F}%
\sum_{z\in B(x,R)}u_{i}(z)\mu (z)\,  \label{sPMVF}
\end{equation}%
satisfied, where $F=F\left( R\right) $,$n=c_{2}F\left( R\right) $.
\end{definition}

\begin{remark}
\label{rSPMVF}Let us observe that in this definition the volume doubling
property and time comparison principle are "built in", as in the skewed
parabolic mean value inequality. \ The condition $E\simeq F$ follows from %
\ref{sPMVF} as well.
\end{remark}

In this section we prove the following theorems.

\begin{theorem}
\label{tslUE1}\label{tDUE+}For any weighted graph $(\Gamma ,\mu )$ if $%
\left( p_{0}\right) ,(VD),\left( TD\right) $ and $\left( E\right) $ hold
then the following statements are equivalent

\begin{enumerate}
\item for a proper function $F$, the $F$-based diagonal upper estimate hold,
that is, there is a $C>0$ such that for all $x\in \Gamma ,n>0$
\begin{equation}
P_{n}(x,x)\leq \frac{C\mu (x)}{V(x,f(n))},  \label{duef}
\end{equation}

\item the estimate, $\left( UE_{F}\right) $ holds for a proper $F$: there
are $C,c>0$ such that for all $x,y\in \Gamma ,n>0$%
\begin{equation}
P_{n}(x,y)\leq \frac{C\mu (y)}{V(x,f(n))}\exp -ck(n,d(x,y)),  \label{UEF1}
\end{equation}

\item the parabolic mean value inequality, $\left( \ref{PMV}\right) $ holds,

\item the mean value inequality, $\left( MV\right) $ holds,

\item $\left( \ref{MVG}\right) $ holds,

\item $\left( \ref{UBG}\right) $ holds,

\item $\left( \ref{g01}\right) $ holds.
\end{enumerate}
\end{theorem}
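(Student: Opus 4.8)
The plan is to reduce this semi-local statement to the already-proven local Theorem \ref{tLDUE+} by showing that, under $(p_0),(VD),(TD)$ and $(E)$, all the $F$-based objects in items 1, 2, 6 coincide (up to constants) with their $E$-based counterparts, and then to route the remaining equivalences through that theorem. The first step is to establish the bridge between $F$ and $E$: since $(E)$ gives $F(R)\le E(x,R)\le C_0F(R)$ (this is (\ref{E})), and $(TD)$ forces the doubling (\ref{ED1F}) of $F$, the inverse functions satisfy $f(n)\simeq e(x,n)$ uniformly in $x$, so $V(x,f(n))\simeq V(x,e(x,n))$ by $(VD)$; hence item 1 here is literally $(DUE)$ of Theorem \ref{tLDUE+} up to a constant, and similarly item 2 here matches item 2 there once one checks $k(n,d)\simeq \underline k(x,n,d)$ (or, more cheaply, that the two exponential factors $\exp(-ck(n,d))$ and $\exp(-c(E(x,d)/n)^{1/(\beta-1)})$ dominate each other in both directions, using Lemma \ref{k> copy(1)} together with the definition (\ref{kdef}) of $k$ and the doubling of $F$). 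This reduction also needs $(TD)+(E)\Longrightarrow(TC)$: indeed $(E)$ turns (\ref{pd2e}) into a triviality and $(TC)\Longleftrightarrow(TD)+(\ref{pd2e})$ was noted in Section \ref{sprel}, so all hypotheses of Theorem \ref{tLDUE+} are in force.

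With that bridge in hand, items 3, 4, 5, 7 are already known to be equivalent to each other and to $(DUE)$ by Theorem \ref{tLDUE+} (their statements do not mention $F$ at all), so the only genuinely new work is to insert item 6, the Green kernel bound (\ref{UBG}), into the cycle. The natural place is between items 5 (or 4) and 7: I would show $(\ref{MVG})\Longrightarrow(\ref{UBG})\Longrightarrow(\ref{g01})$. The second of these is immediate from the $F$–$E$ bridge, since $\sum_{i=F(d)}^{C'F(R)}\frac{C}{V(x,f(i))}$ is, term by term, comparable to a sum that telescopes via $(VD)$ and the doubling of $F$ down to $C''\,E(x,R)/V(x,d)$ — this is exactly the kind of "sum of $1/V(x,f(i))$ collapses to mean-exit-time-over-volume" computation, and one bounds it by splitting the range dyadically in $F$-scale and using $V(x,f(2i))\le D_V^{\log_2 A_V}V(x,f(i))$ so that the geometric series converges. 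For the direction $(\ref{MVG})\Longrightarrow(\ref{UBG})$ I would mimic the last part of the proof of Lemma \ref{lgs}: take $u(z)=g^{B(x,2R)}(y,z)$, which is non-negative and harmonic on $B(x,d)$, apply $(\ref{MVG})$, and then estimate $\sum_{z\in B(x,d)}g^{B(x,2R)}(y,z)\mu(z)=E_y(B(x,d)\cap\cdots)$-type quantities by $\overline E(x,2R)\simeq E(x,2R)\simeq F(R)$, converting the single factor $E(x,R)/V(x,d)$ back into the sum form of (\ref{UBG}) through the same telescoping comparison run in reverse (here one uses $V(x,d)\simeq V(x,f(F(d)))$ and the anti-doubling (\ref{aTD1})/(\ref{adE}) to guarantee enough distinct scales between $F(d)$ and $F(R)$ for the sum not to collapse too far).

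I expect the main obstacle to be bookkeeping around "proper function $F$": the statement quantifies item 1 and item 2 as holding "for a proper $F$", so one must check that the $F(R)=\inf_x E(x,R)$ built from $(E)$ is proper in the sense of Definition \ref{dpropf} (referenced but not yet available in this excerpt), and, conversely, that if $(duef)$ or $(UE_F)$ holds for some a priori unrelated proper $F$, then that $F$ is forced to be comparable to $\inf_x E(x,R)$ — this is the content of Remark \ref{rSPMVF}-style observations ($E\simeq F$ is recoverable from the estimates) and will require extracting $(E)$ and the doubling of the mean exit time from the diagonal/off-diagonal bound itself, presumably by the standard argument that $\sum_y P_n(x,y)=1$ together with $(duef)$ pins down $V(x,f(n))$ and hence $f$, hence $F$. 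Once $F$ is identified with $\inf_x E(x,R)$ the rest is the routine doubling-chasing described above, and the theorem follows by assembling $1\Leftrightarrow 2\Leftrightarrow 3\Leftrightarrow 4\Leftrightarrow 5\Leftrightarrow 7$ from Theorem \ref{tLDUE+} plus the new arc $5\Rightarrow 6\Rightarrow 7$.
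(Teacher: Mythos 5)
Your overall architecture is the right one and matches the paper's: under $(TD)+(E)\Rightarrow(TC)$ one has $f(n)\simeq e(x,n)$ and $V(x,f(n))\simeq V(x,e(x,n))$, so items 1--5 and 7 are exactly the corresponding items of Theorem \ref{tLDUE+} up to constants (this is Corollary \ref{cdue} in the paper), and the only new work is to splice the Green kernel bound $\left( \ref{UBG}\right) $ into the cycle. The paper does this via the arc $(UE_F)\Rightarrow\left( \ref{UBG}\right) \Rightarrow\left( \ref{g01}\right) $, where the first implication is proved by writing $g^{B}(y,x)=\sum_{i}p_{i}^{B}(y,x)$ and splitting the time sum into three ranges $i<F(d)$, $F(d)\le i<E(x,R)$, $i\ge E(x,R)$: the middle range yields the sum $\sum_{i=F(d)}^{F(R)}C/V(x,f(i))$ directly and termwise from $(UE_F)$, while the two end ranges are absorbed using the off-diagonal decay and the tail estimate respectively.

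Your proposed entry arc $\left( \ref{MVG}\right) \Rightarrow\left( \ref{UBG}\right) $ has a genuine gap. Applying the mean value property to $u(z)=g^{B(x,2R)}(y,z)$ and bounding $\sum_{z}g^{B}(y,z)\mu(z)$ by $\overline{E}(x,2R)$ gives only the single-term bound $g^{B}(y,x)\le CE(x,R)/V(x,d)$, i.e.\ $\left( \ref{g01}\right) $, and your plan to recover $\left( \ref{UBG}\right) $ by ``running the telescoping in reverse'' cannot work: since $V(x,f(i))$ is increasing in $i$, one always has
\begin{equation*}
\sum_{i=F(d)}^{C'F(R)}\frac{1}{V(x,f(i))}\;\leq\;\frac{C'F(R)}{V(x,f(F(d)))}\;\simeq\;\frac{E(x,R)}{V(x,d)},
\end{equation*}
so $\left( \ref{UBG}\right) $ is strictly the \emph{sharper} bound and is not implied by $\left( \ref{g01}\right) $ no matter how many scales lie between $F(d)$ and $F(R)$. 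Concretely, on $\mathbb{Z}^{3}$ with $F(r)=r^{2}$ the right-hand side of $\left( \ref{UBG}\right) $ is of order $1/d$ while $E(x,R)/V(x,d)$ is of order $R^{2}/d^{3}$, which is much larger when $R\gg d$. To close the cycle you must derive $\left( \ref{UBG}\right) $ from the full off-diagonal estimate $(UE_F)$ by the time decomposition of the Green function as above (Theorem \ref{tDUEeq} in the paper); the implication $\left( \ref{UBG}\right) \Rightarrow\left( \ref{g01}\right) $ and the rest of your reductions are fine.
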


For the notion of (very-) proper $F$ see Definition \ref{dpropf}, and the
existence of the inverse of $F$ in the next section. Similarly to Theorem %
\ref{tsPMV} the following is true.

\begin{theorem}
\label{tsPMVF} If $\left( \Gamma ,\mu \right) $ satisfies $\left(
p_{0}\right) $ then the following conditions are equivalent.

\begin{enumerate}
\item the $F$parabolic mean value inequality, $\left( \ref{sPMVF}%
\right) $ holds for a proper $F,$

\item $\left( VD\right) ,\left( TD\right) ,\left( E\right) $ and $\left(
MV\right) $ holds

\item $\left( VD\right) ,\left( TD\right) ,\left( E\right) $ and $\left( \ref%
{duef}\right) $ holds,

\item $\left( VD\right) ,\left( TD\right) ,\left( E\right) $ and $\left(
UE_{F}\right) $ holds,
\end{enumerate}
\end{theorem}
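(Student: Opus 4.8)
The plan is to reduce everything to Theorem \ref{tDUE+}, using Remark \ref{rSPMVF} to extract the standing conditions $(VD)$, $(TD)$, $(E)$ from the $F$-parabolic mean value inequality itself. First I would establish the cycle $2\Leftrightarrow 3\Leftrightarrow 4\Leftrightarrow$ ($(UE_F)$) under the hypotheses $(p_0),(VD),(TD),(E)$: this is precisely the content of Theorem \ref{tDUE+}, since under $(E)$ one has $E(x,R)\simeq F(R)$ so that a proper $F$ exists (the properties of $E$ from Section \ref{sloc} transfer to $F$), the $F$-based diagonal upper estimate \eqref{duef} is equivalent to $(DUE)$ written with $e(x,n)$ replaced by $f(n)$ up to doubling, and likewise $(UE_F)$ matches \eqref{UE1} because $k(n,d)$ and $\left(E(x,d)/n\right)^{1/(\beta-1)}$ are comparable (Lemma \ref{k> copy(1)} and the definitions of $k$, $\underline{k}$). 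Thus statements $2,3,4$ of the present theorem are mutually equivalent as soon as the four standing conditions hold.

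Next I would close the loop through statement $1$. The implication $1\Rightarrow 2$ is the heart: assuming the $F$-parabolic mean value inequality \eqref{sPMVF}, Remark \ref{rSPMVF} gives $(VD)$, $(TD)$ and $E\simeq F$ (hence $(E)$) by testing with $u_i\equiv 1$ and with the indicator-type solutions, exactly as in Remark \ref{rsPMV} for the skewed version; and \eqref{sPMVF} trivially implies the ordinary parabolic mean value inequality \eqref{PMV} by restricting attention to $n=c_2F$ and $x$ itself and using $E(x,R)\simeq E(x,2R)\simeq V$-doubling to absorb the $V(x,2R)E(x,2R)$ denominator into $V(x,R)E(x,R)$. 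So $1\Rightarrow$ ($(VD),(TD),(E)$ and \eqref{PMV}), which is statement $3$. For the reverse, $2\Rightarrow 1$ (equivalently $4\Rightarrow 1$): starting from $(VD),(TD),(E)$ and $(MV)$, one runs the argument at the end of the proof of Theorem \ref{tLDUE+} — represent a Dirichlet solution, apply $(UE_F)$ (available since $2\Rightarrow 4$), bound $p^B_{n-j}(w,y)\le C/V(w,f(n-j))$, use doubling of $f$ and $V$ to replace $V(w,f(n-j))$ by $V(x,2R)$ on the time window $[c_3F,c_4F]$, and finally sum over $j\in[c_1F,c_2F]$ to produce the $E(x,2R)V(x,2R)$ denominator. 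This yields \eqref{smaxPMV}-type estimate with $F$ in place of $E$, which gives \eqref{sPMVF}.

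The main obstacle I anticipate is bookkeeping the interplay between the \emph{local} scale $e(x,n)$ (and $\underline k$) used in the local theory of Section \ref{sloc} and the \emph{uniform} scale $f(n)$ (and $k$) appearing in the semi-local statements: one must verify carefully, using $(E)$ together with \eqref{E} and the doubling/anti-doubling properties \eqref{ED1F}, \eqref{adE}, that $V(x,e(x,n))\simeq V(x,f(n))$ and $\underline k(x,n,R)\simeq k(n,R)$ with constants independent of $x$, so that every statement of Theorem \ref{tDUE+} can be rewritten in its $F$-based form without loss. A secondary technical point is checking that the $F$ produced from $(VD),(TD),(E)$ is genuinely \emph{proper} in the sense of Definition \ref{dpropf}; this should follow from the doubling properties of $E$ and Corollary \ref{cb>2} giving $\beta\ge 2$, but it needs to be stated. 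Once these comparisons are in place, the equivalences are a routine transcription of Theorem \ref{tDUE+} plus the two short arguments (test-function extraction, and summation over the time window) sketched above.
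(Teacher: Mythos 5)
Your proposal is correct and follows essentially the same route as the paper, whose entire proof is the one-line observation that the theorem follows from Theorem \ref{tDUE+} together with Remark \ref{rSPMVF} (which extracts $(VD)$, $(TD)$ and $E\simeq F$ from the $F$-parabolic mean value inequality). Your additional detail — the test-function extraction for $1\Rightarrow 2$, the summation over the time window for $2\Rightarrow 1$, and the comparison of $e(x,n)$ with $f(n)$ under $(E)$ — is exactly the bookkeeping the paper leaves implicit.
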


\subsection{The properties of the scale function}

\label{sscalef}Let us recall that $\left( TD\right) $ $+$ $\left( E\right)
\Longrightarrow \left( TC\right) $ and consequently we can deduce several
properties of the space-time scale function easily. First of all the
Einstein relations holds under the standing assumptions of this section.

\begin{corollary}
If $\left( \Gamma ,\mu \right) $ satisfies $\left( p_{0}\right) ,\left(
VD\right) ,\left( TD\right) $ and $\left( E\right) $ then the Einstein
relation
\begin{equation*}
E(x,2R)\simeq \rho (x,R,2R)v(x,R,2R)
\end{equation*}%
holds.
\end{corollary}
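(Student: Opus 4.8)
The plan is to deduce this corollary as an immediate consequence of Theorem \ref{tER1}, which already establishes the Einstein relation $E(x,2R)\simeq \rho(x,R,2R)v(x,R,2R)$ under the hypotheses $\left(p_{0}\right)$, $\left(VD\right)$ and $\left(TC\right)$. Thus the only work is to check that the standing assumptions of this section, namely $\left(p_{0}\right)$, $\left(VD\right)$, $\left(TD\right)$ and $\left(E\right)$, imply the time comparison principle $\left(TC\right)$.

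First I would recall the observation already made at the start of Section \ref{sscalef}: $\left(TD\right)+\left(E\right)\Longrightarrow\left(TC\right)$. Concretely, fix $x\in\Gamma$, $R>0$ and $y\in B(x,R)$. By the doubling property $\left(TD\right)$ in its iterated form $\left(\ref{tdbeta}\right)$ we have $E(x,2R)\leq C E(x,R)$ for a universal constant $C$. By $\left(E\right)$, in the form $\left(\ref{E}\right)$ with the function $F(R)=\inf_{z}E(z,R)$, we get $E(x,R)\leq C_{0}F(R)\leq C_{0}E(y,R)$. Chaining these two inequalities yields $E(x,2R)\leq CC_{0}E(y,R)$, which is exactly $\left(TC\right)$ with $C_{T}=CC_{0}$.

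Having established $\left(TC\right)$, I would then simply invoke Theorem \ref{tER1}: since $\left(p_{0}\right)$, $\left(VD\right)$ and $\left(TC\right)$ all hold, the Einstein relation $E(x,2R)\simeq \rho(x,R,2R)v(x,R,2R)$ follows verbatim. This completes the argument.

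There is no real obstacle here; the corollary is a bookkeeping statement whose entire content is that the semi-local hypotheses are strong enough to place us inside the scope of the earlier local result. The only point requiring the mildest care is making sure $\left(E\right)$ is used in the two-sided form $\left(\ref{E}\right)$ — both $F(R)\leq E(x,R)$ and $E(x,R)\leq C_{0}F(R)$ — so that $E(x,R)$ and $E(y,R)$ are genuinely comparable through the common reference function $F$; once that is in hand the reduction to Theorem \ref{tER1} is automatic.
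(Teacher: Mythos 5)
Your proposal is correct and follows exactly the paper's own route: the paper likewise disposes of the corollary by noting that $\left(TD\right)+\left(E\right)\Longrightarrow\left(TC\right)$ and then citing Theorem \ref{tER1}. The only difference is that you spell out the one-line verification of that implication (chaining $E(x,2R)\leq CE(x,R)\leq CC_{0}F(R)\leq CC_{0}E(y,R)$), which the paper leaves implicit.
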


The statement follows from Theorem \ref{tLER} since $\left( TD\right) $ $+$ $%
\left( E\right) \Longrightarrow \left( TC\right) .$

From the time doubling property it follows that the function
\begin{equation}
F\left( R\right) =\inf_{x\in \Gamma }E\left( x,R\right)  \label{fdef}
\end{equation}%
also has doubling property:%
\begin{equation}
F(2R)\leq D_{E}F(R),  \label{Fd}
\end{equation}%
in particular it is also clear that
\begin{equation}
\frac{F(R)}{F(S)}\leq C_{F}\left( \frac{R}{S}\right) ^{\beta },  \label{ED2F}
\end{equation}%
holds, where $\beta =\log _{2}D_{E}.$

\begin{corollary}
\label{cinv}If $\left( E\right) $ holds and $F\left( R\right) =\inf_{x\in
\Gamma }E\left( x,R\right) $, then $F\left( R\right) $ is strictly
increasing in $R\in
%TCIMACRO{\U{2115} }%
%BeginExpansion
\mathbb{N}
%EndExpansion
$ and has an inverse.
\end{corollary}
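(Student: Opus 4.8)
The plan is to reduce the statement to the pointwise monotonicity estimate already recorded in the proof of Corollary \ref{cmonot} and then pass to the infimum over the center, which is harmless precisely because that estimate is uniform in $x$.

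First recall from $\left( \ref{e>e+1}\right) $ that on \emph{any} weighted graph one has $E\left( x,R+1\right) \geq E\left( x,R\right) +1$ for all $x\in \Gamma $ and $R\in \mathbb{N}$ (this is Lemma \ref{lmarkov} with $S=1$ together with $E\left( y,1\right) \geq 1$). Since $E\left( x,R\right) \geq F\left( R\right) $ by the very definition of $F$, we get for every $x\in \Gamma $
\[
E\left( x,R+1\right) \geq E\left( x,R\right) +1\geq F\left( R\right) +1 .
\]
Taking the infimum over $x$ on the left-hand side yields $F\left( R+1\right) \geq F\left( R\right) +1>F\left( R\right) $, so $F$ is strictly increasing on $\mathbb{N}$. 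Iterating, and using $F\left( 1\right) \geq 1$ (again because $E\left( x,1\right) \geq 1$ for all $x$), we obtain $F\left( R\right) \geq R$, hence $F\left( R\right) \rightarrow \infty $ as $R\rightarrow \infty $.

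A strictly increasing, unbounded function on $\mathbb{N}$ has a well-defined generalized inverse; concretely one sets
\[
f\left( n\right) =\min \left\{ R\in \mathbb{N}:F\left( R\right) \geq n\right\} ,
\]
which is finite for every $n$ by the unboundedness of $F$ and inverts $F$ in exactly the sense in which $e\left( x,\cdot \right) $ inverts $E\left( x,\cdot \right) $ in Corollary \ref{cmonot}. This $f$ is the function referred to throughout the section.

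I do not expect a serious obstacle here: the only point needing care is that the infimum defining $F$ need not be attained at any single vertex, so one cannot argue through ``the'' minimizer. This is circumvented by the fact that the increment inequality $E\left( x,R+1\right) \geq F\left( R\right) +1$ holds \emph{simultaneously} for all $x$, so the infimum may be taken afterwards. Condition $\left( E\right) $ is in fact not needed for the strict monotonicity itself; it enters only to guarantee --- via $\left( \ref{E}\right) $ --- that $F$ controls every $E\left( x,\cdot \right) $ up to the multiplicative constant $C_{0}$, so that $f$ genuinely plays the role of the space-time scale function in the sub-Gaussian estimates.
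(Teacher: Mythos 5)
Your proof is correct and takes essentially the same route as the paper: both rest on the increment inequality $\left( \ref{e>e+1}\right)$ from Lemma \ref{lmarkov} and then deal with the infimum defining $F$. The only (cosmetic) difference is that the paper picks a near-minimizer $x$ with $F(R+1)\geq E(x,R+1)-\frac{1}{2}$, whereas you observe that $E(x,R+1)\geq F(R)+1$ holds uniformly in $x$ and take the infimum afterwards, which is slightly cleaner and even yields the sharper bound $F(R+1)\geq F(R)+1$.
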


\begin{proof}
The statement follows from\ $\left( \ref{e>e+1}\right) $, simply choose $x$
for which%
\begin{eqnarray*}
F\left( R+1\right) &\geq &E\left( x,R+1\right) -\frac{1}{2} \\
&\geq &E\left( x,R\right) +1-\frac{1}{2}>F\left( R\right) .
\end{eqnarray*}
\end{proof}

\begin{corollary}
\label{cFlin}If $\left( E\right) $ holds and $F\left( R\right) =\inf_{x\in
\Gamma }E\left( x,R\right) ,$ then for all $L,R,S\in
%TCIMACRO{\U{2115} }%
%BeginExpansion
\mathbb{N}
%EndExpansion
,$ and $R>S>0$%
\begin{equation}
F\left( R+S\right) \geq F(R)+F(S)  \label{fsupeaddit}
\end{equation}%
and%
\begin{equation}
F(LR)\geq LF(R).  \label{ED4F}
\end{equation}
\end{corollary}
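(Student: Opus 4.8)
The plan is to read off both inequalities directly from Lemma \ref{lmarkov} together with the definition $F(R)=\inf_{x\in\Gamma}E(x,R)$; the hypotheses of this subsection are in fact more than enough, since only the (strong Markov) content of Lemma \ref{lmarkov} and the non-emptiness of the spheres $S(x,R)$ on an infinite connected graph are used.

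For $(\ref{fsupeaddit})$ I would fix $R>S>0$ and an arbitrary $x\in\Gamma$. Lemma \ref{lmarkov} gives
\[
E(x,R+S)\geq E(x,R)+\min_{y\in S(x,R)}E(y,S)\geq E(x,R)+F(S),
\]
the second inequality because $S(x,R)\subseteq\Gamma$ forces $\min_{y\in S(x,R)}E(y,S)\geq\inf_{z\in\Gamma}E(z,S)=F(S)$. Since the right-hand side is at least $\inf_{z\in\Gamma}E(z,R)+F(S)=F(R)+F(S)$, we obtain $E(x,R+S)\geq F(R)+F(S)$ for every $x$, and taking the infimum over $x$ yields $F(R+S)\geq F(R)+F(S)$.

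For $(\ref{ED4F})$ I would induct on $L$, the case $L=1$ being trivial. The estimate just proved in fact holds in the form $F(R_1+R_2)\geq F(R_1)+F(R_2)$ for all integers $R_1\geq R_2\geq 1$: inspecting the proof of Lemma \ref{lmarkov}, the only geometric input is the inclusion $B(y,R_2)\subset B(x,R_1+R_2)$ for $y\in S(x,R_1)$, which is the triangle inequality and does not need $R_1>R_2$ strictly. Granting this, if $F(kR)\geq kF(R)$ then, writing $(k+1)R=(kR)+R$ with $kR\geq R$,
\[
F((k+1)R)\geq F(kR)+F(R)\geq kF(R)+F(R)=(k+1)F(R),
\]
which closes the induction.

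The only mildly delicate point is the use of the non-strict ($R_1=R_2$) case of the superadditivity of $F$. If one prefers to invoke Lemma \ref{lmarkov} exactly as stated, one can instead dispatch $L=2$ by hand, $E(x,2R)\geq E(x,R)+\min_{y\in S(x,R)}E(y,R)\geq 2F(R)$ via the same ball inclusion, and then run the induction for $L\geq 3$ through $F(LR)=F((L-1)R+R)$ with $(L-1)R>R$, which is precisely the strict case. I do not anticipate any genuine obstacle: the whole statement is a soft consequence of the strong Markov property already packaged in Lemma \ref{lmarkov}.
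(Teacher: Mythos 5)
Your proof is correct and follows essentially the same route as the paper, which simply declares both inequalities ``immediate from Lemma \ref{lmarkov} using the same argument as in Corollary \ref{cinv}'' (i.e.\ apply the lemma, bound the minimum over the sphere by the infimum defining $F$, and pass to the infimum in $x$ --- your version with the infimum taken at the end is equivalent to the paper's $\varepsilon$-near-minimizer phrasing). Your extra care about the $R=S$ case needed for the step $F(2R)\geq 2F(R)$ in the induction is a legitimate point the paper glosses over, and your resolution of it is sound.
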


\begin{proof}
Both statements are immediate from Lemma \ref{lmarkov} using the same
argument as in Corollary \ref{cinv}.
\end{proof}

\begin{definition}
We shall say that $F$ has the anti doubling property if there is a $%
A_{F},B_{F}>1$ such that
\begin{equation}
F(A_{F}R)\geq B_{F}F(R).  \label{adF}
\end{equation}%
and the strong anti-doubling property if $B_{F}>A_{F}.$
\end{definition}

\begin{remark}
Equivalently the anti-doubling property for $F$ means that there are $%
c,\beta ^{^{\prime }}>0$ such that for $R>S>0$%
\begin{equation}
\frac{F\left( R\right) }{F\left( S\right) }\geq c\left( \frac{R}{S}\right)
^{\beta ^{\prime }}  \label{aDF1}
\end{equation}%
and the strong anti-doubling property is equivalent with $\left( \ref{aDF1}%
\right) $ for a $\beta ^{\prime }>1.$
\end{remark}

\begin{proposition}
If $\left( \Gamma ,\mu \right) $ satisfies $\left( p_{0}\right) $ and $%
\left( E\right) ,$ then for the function defined in $\left( \ref{fdef}%
\right) $ the anti-doubling property $\left( \ref{adF}\right) $ holds.
\end{proposition}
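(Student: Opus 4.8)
The plan is to deduce $\left(\ref{adF}\right)$ by transferring the anti-doubling property of the mean exit time $E$ itself down to its pointwise infimum $F$. The key observation is that this anti-doubling of $E$ is already available in the present setting: although Proposition \ref{pPD3E} is formulated under $\left(TC\right)$, its proof merely invokes the implication $\left(\ref{pd2e}\right)\Longrightarrow\left(\ref{aTD1}\right)$ from \cite{ter}, and the weak inequality $\left(\ref{pd2e}\right)$ is an immediate consequence of $\left(E\right)$ --- for $y\in B\left(x,R\right)$, the relation $E(x,R)\simeq E(y,R)$ gives $E(x,R)\leq C E(y,R)$. Hence, under $\left(p_{0}\right)$ and $\left(E\right)$ the mean exit time satisfies $\left(\ref{aTD1}\right)$: there is an $A_{E}>1$ with $E\left(x,A_{E}R\right)\geq 2E\left(x,R\right)$ for all $x\in\Gamma$ and $R>0$.

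Granting this, the transfer to $F$ is a one-line argument. Fix $R>0$; then for every $x\in\Gamma$, by the anti-doubling of $E$ and the definition $F(R)=\inf_{z\in\Gamma}E\left(z,R\right)$,
\[
E\left(x,A_{E}R\right)\geq 2E\left(x,R\right)\geq 2F(R).
\]
Taking the infimum over $x$ on the left-hand side gives $F\left(A_{E}R\right)\geq 2F(R)$, which is precisely $\left(\ref{adF}\right)$ with $A_{F}=A_{E}$ and $B_{F}=2>1$. An alternative route is to start from $\left(\ref{ED4F}\right)$ of Corollary \ref{cFlin}, which already yields $F(2R)\geq 2F(R)$ for integer $R$, and then pass to arbitrary $R$ using $F(R)=F\left(\lceil R\rceil\right)$ together with the doubling bound $\left(\ref{Fd}\right)$; going through $E$ is cleaner, since $\left(\ref{aTD1}\right)$ holds for all real $R>0$ from the outset.

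I do not expect a genuine obstacle here. The one point that has to be noticed is that the anti-doubling of $E$ does not require the full time comparison principle but follows already from $\left(E\right)$ together with $\left(p_{0}\right)$ (the latter also guaranteeing $F<\infty$ via Remark \ref{re<1/p}); once that is in hand, passing to the infimum is routine and the constant $B_{F}$ can be taken to be the absolute value $2$.
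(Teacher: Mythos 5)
Your proof is correct and follows essentially the same route as the paper: deduce the anti-doubling $E(x,A_{E}R)\geq 2E(x,R)$ from $\left(E\right)\Rightarrow\left(\ref{pd2e}\right)$ via Proposition \ref{pPD3E}, then pass to the infimum defining $F$. The paper phrases the last step with a near-minimizer and an $\varepsilon$, while you take the infimum of $E\left(x,A_{E}R\right)\geq 2F(R)$ over $x$ directly; the two are equivalent, and yours is slightly cleaner.
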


\begin{proof}
Since $\left( E\right) \Longrightarrow \left( \ref{pd2e}\right) $ by
Proposition \ref{pPD3E} we have%
\begin{equation*}
E(x,AR)\geq 2E(x,R).
\end{equation*}%
and it is clear that for any $\varepsilon >0$,$R>0$ there is an $x$ for
which \
\begin{eqnarray*}
F\left( AR\right) &\geq &E\left( x,AR\right) -\varepsilon \geq 2E\left(
x,R\right) -\varepsilon \\
&\geq &2F\left( R\right) -\varepsilon
\end{eqnarray*}%
which yields the statement since $\varepsilon $ is arbitrarily small.
\end{proof}

\begin{corollary}
\label{cF>2}If $\left( p_{0}\right) $,$(VD),\left( TD\right) $ and $(E)$
holds then
\begin{equation*}
E(x,R)\geq cR^{2}
\end{equation*}%
and%
\begin{equation}
F\left( R\right) \geq cR^{2}.  \label{F>R2}
\end{equation}
\end{corollary}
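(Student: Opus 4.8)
The plan is to prove Corollary~\ref{cF>2} by first establishing the pointwise lower bound $E(x,R)\geq cR^{2}$ and then transferring it to $F$ via the infimum definition $(\ref{fdef})$. The key observation is that under $\left(p_{0}\right),(VD),\left(TD\right)$ and $\left(E\right)$ all the hypotheses of Corollary~\ref{cb>2} are in force: indeed $\left(p_{0}\right)$ and $(VD)$ are assumed directly, and the remaining hypothesis $(\ref{Ebar})$, namely $\max_{y\in B(x,R)}E_{y}(x,R)\leq CE(x,R)$, follows because $\left(TD\right)+\left(E\right)\Longrightarrow\left(TC\right)$ (as recalled at the start of Section~\ref{sscalef}) and $\left(TC\right)$ implies $(\ref{pd2e})$, which together with $\overline{E}(x,R)\leq E(x,2R)$ gives $(\ref{Ebar})$; alternatively this is exactly the content of Remark~\ref{re>r2}. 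Hence Corollary~\ref{cb>2} yields $E(x,R)\geq cR^{2}$ with a constant $c>0$ uniform in $x$.

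Next I would deduce $(\ref{F>R2})$. Since the constant $c$ above does not depend on $x$, taking the infimum over $x\in\Gamma$ in $E(x,R)\geq cR^{2}$ gives immediately
\begin{equation*}
F(R)=\inf_{x\in\Gamma}E(x,R)\geq cR^{2},
\end{equation*}
which is the claimed estimate. No further work is needed: the uniformity of the constant is the whole point, and it is built into Corollary~\ref{cb>2}.

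The only step requiring care is verifying that $(\ref{Ebar})$ genuinely holds under the present hypotheses, since Corollary~\ref{cb>2} is stated with $(\ref{Ebar})$ as an explicit assumption. The chain is $\left(TD\right)+\left(E\right)\Rightarrow\left(TC\right)\Rightarrow(\ref{pd2e})$, and then for any $y\in B(x,R)$ one has $B(y,R)\subset B(x,2R)$, so $E_{y}(x,R)\leq E_{y}(y,2R)=E(y,2R)\leq C\,E(x,2R)\leq C D_{T} E(x,R)$ using $(\ref{pd2e})$ applied in $B(x,2R)$ and the doubling property $\left(TD\right)$. This gives $(\ref{Ebar})$ with a uniform constant, so Corollary~\ref{cb>2} applies. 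I do not anticipate a real obstacle here — the statement is essentially a corollary in the literal sense, a repackaging of Corollary~\ref{cb>2} plus the elementary remark that an infimum of functions bounded below by $cR^{2}$ is itself bounded below by $cR^{2}$.
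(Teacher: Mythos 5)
Your proof is correct and follows essentially the same route as the paper, whose entire proof is the one-line appeal to Remark \ref{re>r2}: you correctly unpack that remark via $\left( TD\right) +\left( E\right) \Rightarrow \left( TC\right) \Rightarrow \left( \ref{Ebar}\right)$, invoke Corollary \ref{cb>2} for the uniform bound $E(x,R)\geq cR^{2}$, and pass to the infimum for $F$. No gaps.
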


\begin{proof}
\bigskip The statement follows from Remark \ref{re>r2}.
\end{proof}

\begin{definition}
\label{dpropf}A function $F:%
%TCIMACRO{\U{2115} }%
%BeginExpansion
\mathbb{N}
%EndExpansion
\rightarrow
%TCIMACRO{\U{211d} }%
%BeginExpansion
\mathbb{R}
%EndExpansion
$ will be called proper if it is strictly monotone and satisfies $\left( \ref%
{Fd}\right) ,\left( \ref{fsupeaddit}\right) ,\left( \ref{adF}\right) $ and $%
\left( \ref{F>R2}\right) $, and very proper if in addition it satisfies $%
\left( \ref{adF}\right) $ with a $B_{F}>A_{F}.$
\end{definition}

The above observations can be summarized as follows.

\begin{corollary}
\label{cFproper}If $\left( \Gamma ,\mu \right) $ satisfies $\left(
p_{0}\right) ,\left( VD\right) ,\left( TD\right) $ and $\left( E\right) $
then $F$ is proper.
\end{corollary}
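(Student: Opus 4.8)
The plan is to verify directly that $F$ satisfies every clause of Definition \ref{dpropf}, pulling each ingredient from the sequence of results already established in Section \ref{sscalef}. First I would note that $F$ is well-defined by $\left( E\right) $: indeed $\left( E\right) $ gives $\left( \ref{E}\right) $, so $F\left( R\right) =\inf_{x}E\left( x,R\right) $ is finite and comparable to each $E\left( x,R\right) $. Strict monotonicity of $F$ on $\mathbb{N}$ is exactly Corollary \ref{cinv}, which used only $\left( E\right) $ together with the universal increment bound $\left( \ref{e>e+1}\right) $. The super-additivity $\left( \ref{fsupeaddit}\right) $, i.e. $F\left( R+S\right) \geq F\left( R\right) +F\left( S\right) $, is the content of Corollary \ref{cFlin}, which follows from Lemma \ref{lmarkov} by the same infimum-chasing argument; so this clause is already in hand.

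Next I would address the two doubling-type clauses. The (ordinary) doubling property $\left( \ref{Fd}\right) $, $F\left( 2R\right) \leq D_{E}F\left( R\right) $, is inherited from the time doubling property $\left( TD\right) $ together with $\left( E\right) $: pick $x$ nearly attaining the infimum defining $F\left( 2R\right) $ and apply $\left( TD\right) $ to $E\left( x,2R\right) $, then bound $E\left( x,R\right) $ above by $C_{0}F\left( R\right) $ via $\left( \ref{E}\right) $. The anti-doubling property $\left( \ref{adF}\right) $ is precisely the Proposition immediately preceding Corollary \ref{cF>2}, proved from $\left( p_{0}\right) +\left( E\right) $ via Proposition \ref{pPD3E} (note $\left( E\right) \Rightarrow \left( \ref{pd2e}\right)$). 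Finally the quadratic lower bound $\left( \ref{F>R2}\right) $, $F\left( R\right) \geq cR^{2}$, is Corollary \ref{cF>2}, which under $\left( p_{0}\right) ,\left( VD\right) ,\left( TD\right) ,\left( E\right) $ follows from Remark \ref{re>r2} (using that $\left( TD\right) +\left( E\right) \Rightarrow \left( TC\right) $, hence $\left( \ref{Ebar}\right) $ holds, hence $E\left( x,R\right) \geq cR^{2}$, hence $F\left( R\right) \geq cR^{2}$). Collecting: strict monotonicity, $\left( \ref{Fd}\right) $, $\left( \ref{fsupeaddit}\right) $, $\left( \ref{adF}\right) $ and $\left( \ref{F>R2}\right) $ all hold, which is exactly the definition of properness.

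There is really no main obstacle here — the corollary is a bookkeeping statement whose sole purpose is to package the lemmas of Section \ref{sscalef} into the single word ``proper''. The only point requiring a word of care is that several of the cited results (Corollaries \ref{cinv}, \ref{cFlin} and the anti-doubling Proposition) are stated under weaker hypotheses than the full $\left( p_{0}\right) ,\left( VD\right) ,\left( TD\right) ,\left( E\right) $ assumed in the corollary, so one should simply observe that the corollary's hypotheses imply all the hypotheses invoked; in particular the reduction $\left( TD\right) +\left( E\right) \Rightarrow \left( TC\right) $ is what licenses using the $\left( TC\right) $-based results. I would write the proof as a short enumeration matching the clauses of Definition \ref{dpropf} to the corresponding earlier results, with the one-line justifications above inlined.
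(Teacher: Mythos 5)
Your proposal is correct and follows exactly the paper's route: the corollary is stated immediately after the phrase ``The above observations can be summarized as follows,'' so its intended proof is precisely the bookkeeping you describe, matching each clause of Definition \ref{dpropf} to Corollary \ref{cinv}, the doubling bound $\left( \ref{Fd}\right)$, Corollary \ref{cFlin}, the anti-doubling Proposition, and Corollary \ref{cF>2}, with $\left( TD\right) +\left( E\right) \Rightarrow \left( TC\right)$ supplying the stronger hypotheses where needed. Nothing is missing.
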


The following lemma provides estimates of the sub-Gaussian kernel function.

\begin{lemma}
\label{k>} If $\left( E\right) $ and $\left( TD\right) $ hold, then for $%
k=k(n,R)$%
\begin{equation}
k+1\geq c\left( \frac{F(R)}{n}\right) ^{\frac{1}{\beta -1}}\text{\ \ ,\ \ \
\ }k+1\geq c^{\prime }\left( \frac{R}{f(n)}\right) ^{\frac{\beta }{\beta -1}}%
\text{\ \ and \ \ }k\leq C\left( \frac{R^{\beta }}{n}\right) ^{\frac{1}{%
\beta -1}}  \label{k><}
\end{equation}
\end{lemma}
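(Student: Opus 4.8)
The plan is to read all three inequalities off directly from the definition of $k=k(n,R)$ as the \emph{maximal} integer with $n/k\le F(\lfloor R/k\rfloor)$, using only the doubling properties of $F$ recorded above: the bound $(\ref{ED2F})$ in the form $F(R)/F(S)\le C_F(R/S)^{\beta}$ for $R\ge S\ge 1$, and $(\ref{F>R2})$, which forces $\beta=\log_2 D_E\ge 2>1$. I would also invoke two elementary remarks — any appropriate index $k$ satisfies $\lfloor R/k\rfloor\ge 1$, hence $k\le R$, since otherwise $F(\lfloor R/k\rfloor)=F(0)=0<n/k$; and $\lfloor x\rfloor\ge x/2$ for $x\ge 1$ — together with the fact that, $(\ref{adF})$ being in force, the inverse $f=F^{-1}$ inherits a doubling bound: iterating $(\ref{adF})$ gives $F(A_F^{m}f(n))\ge B_F^{m}n\ge Cn$ for suitable fixed $m$, whence $f(Cn)\le C'f(n)$ for every fixed $C$. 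All non‑integer arguments of $F$ below are to be rounded, which only affects the constants.

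For the upper bound $k\le C(R^{\beta}/n)^{1/(\beta-1)}$: if $k=0$ there is nothing to prove, and if $k\ge 1$ then $k$ is appropriate and $1\le\lfloor R/k\rfloor\le R/k$, so applying $(\ref{ED2F})$ with base point $S=1$ gives $n/k\le F(\lfloor R/k\rfloor)\le C_F F(1)(R/k)^{\beta}$. Rearranging yields $n\,k^{\beta-1}\le C_F F(1)R^{\beta}$, which is the claim.

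For $k+1\ge c(F(R)/n)^{1/(\beta-1)}$: by maximality the index $k+1$ is \emph{not} appropriate, so $n/(k+1)>F(\lfloor R/(k+1)\rfloor)$. In the range $k+1\le R$ — exactly where $\lfloor R/(k+1)\rfloor\ge 1$, and where the exponential factor of $(UE_F)$ carries content — one has $R/\lfloor R/(k+1)\rfloor\le 2(k+1)$, so $(\ref{ED2F})$ gives
\[
F(R)\le C_F\left(\frac{R}{\lfloor R/(k+1)\rfloor}\right)^{\beta}F\bigl(\lfloor R/(k+1)\rfloor\bigr)< C_F\bigl(2(k+1)\bigr)^{\beta}\frac{n}{k+1}=C_F2^{\beta}(k+1)^{\beta-1}n,
\]
and rearranging proves the inequality; in the complementary regime $k+1>R$ (i.e. $k=R$) only the trivial bound $k+1\ge 1$ is available, but this is all that is needed since then $n<R$ and the walk cannot reach distance $R$ in $n$ steps.

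For the remaining bound $k+1\ge c'(R/f(n))^{\beta/(\beta-1)}$ I would again start from $n/(k+1)>F(\lfloor R/(k+1)\rfloor)$ and compare the argument with $R/(k+1)^{(\beta-1)/\beta}$. Since $(\beta-1)/\beta<1$ the latter exceeds $R/(k+1)$, with ratio $(k+1)^{1/\beta}$, so $(\ref{ED2F})$ and one halving step give
\[
F\left(\frac{R}{(k+1)^{(\beta-1)/\beta}}\right)\le C_F(k+1)\,F\left(\frac{R}{k+1}\right)\le C_FD_E(k+1)\,F\bigl(\lfloor R/(k+1)\rfloor\bigr)< C_FD_E\,n,
\]
and applying $f$ together with its doubling bound yields $R/(k+1)^{(\beta-1)/\beta}<C''f(n)$, i.e. $k+1> (C'')^{-\beta/(\beta-1)}(R/f(n))^{\beta/(\beta-1)}$. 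The one place needing care — and what I expect to be the main obstacle — is the choice of the auxiliary exponent $(\beta-1)/\beta$ here: it is calibrated precisely so that the doubling factor $\bigl((k+1)^{1/\beta}\bigr)^{\beta}=k+1$ cancels the $1/(k+1)$ produced by maximality, leaving an inequality of the form $F(\,\cdot\,)<Cn$; once that bookkeeping and the degenerate regime $k+1>R$ are handled, everything else is the doubling/anti‑doubling estimates already in hand.
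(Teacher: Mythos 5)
Your derivation of all three bounds from the maximality in the definition of $k(n,R)$, via $(\ref{ED2F})$, $\lfloor x\rfloor\ge x/2$ for $x\ge 1$, and the anti-doubling of $F$ (hence doubling of $f$), is the standard argument and is essentially what the paper leaves unsaid: the paper's entire proof is the sentence ``the statement follows from $(TD)$ easily,'' so your write-up is, in effect, the missing proof. The upper bound $k\le C(R^{\beta}/n)^{1/(\beta-1)}$ and both lower bounds in the regime $k+1\le R$ are correct as you present them, and your observation that the auxiliary exponent $(\beta-1)/\beta$ is calibrated so that the doubling factor cancels the $1/(k+1)$ from maximality is exactly the right bookkeeping. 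The appeal to $(\ref{F>R2})$ to ensure $\beta\ge 2>1$, and to $(\ref{adF})$ for $f(Cn)\le C'f(n)$, is also appropriate (both need the standing hypotheses $(p_0),(VD)$ of the section, not just $(E)$ and $(TD)$ as the lemma literally states, but that is the paper's imprecision, not yours).

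The one genuine soft spot is your dismissal of the degenerate regime. You write that $k+1>R$ forces $k=R$ and hence $n<R$; the first implication is right (since any appropriate index satisfies $k\le R$), but the second is backwards: $k=R$ is equivalent to $R$ being appropriate, i.e.\ to $n\le F(1)R$, and since $F(1)\ge 1$ this includes all $n$ with $R\le n\le F(1)R$. In that band the two lower bounds do still hold, but this needs a separate two-line check (from $n\ge R$ and $F(R)\le C_FF(1)R^{\beta}$ one gets $(F(R)/n)^{1/(\beta-1)}\le C R$, and similarly for the $f$ version), not the maximality inequality, which is vacuous there. For $n<R$ the two lower bounds of $(\ref{k><})$ are in fact \emph{false} as stated --- take $n=1$ and $R$ large, where $k=R$ but $(F(R)/n)^{1/(\beta-1)}\simeq R^{\beta/(\beta-1)}\gg R+1$ --- so the lemma must be read with the implicit restriction $R\le n$ under which it is used (the paper writes ``$1\le k\le R\le n$'' explicitly in Proposition \ref{tDUExy}). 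Your instinct that ``the walk cannot reach distance $R$ in $n$ steps'' is the right reason this regime is harmless in applications, but it is a statement about where the lemma is needed, not a proof that the stated inequalities hold there; say so explicitly rather than folding it into the proof.
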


\begin{proof}
The statement follows from $\left( TD\right) $ easily$.$
\end{proof}

\subsection{The diagonal upper estimate}

For the proof of Theorem \ref{tslUE1} and \ref{tsPMV} our entry point is
Theorem \ref{tLDUE+}.

\begin{corollary}
\label{cdue} Assume that $\left( \Gamma ,\mu \right) $ satisfies $%
(p_{0}),\left( VD\right) ,\left( TD\right) $ and $\left( E\right) ,$ then
for the function $F$ defined in $\left( \ref{fdef}\right) $
\begin{equation*}
\left( MV\right) \Leftrightarrow \left( \ref{duef}\right) \Leftrightarrow
\left( UE_{F}\right)
\end{equation*}%
and
\begin{equation*}
\left( \ref{PMV}\right) \Leftrightarrow \ \left( MV\right) \Leftrightarrow
\left( \ref{g01}\right) \Leftrightarrow \left( \ref{MVG}\right)
\end{equation*}%
holds as well.
\end{corollary}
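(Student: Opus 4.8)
The plan is to reduce Corollary \ref{cdue} to Theorem \ref{tLDUE+}, which already establishes all the required equivalences in the local framework (under $(p_0),(VD),(TC)$), and then to translate between the $x$-dependent scale $E(x,R)$ used there and the uniform scale function $F(R)=\inf_x E(x,R)$ used here. The first step is to observe that under $(p_0),(VD),(TD),(E)$ all the hypotheses of Theorem \ref{tLDUE+} are in force: indeed $(TD)+(E)\Longrightarrow(TC)$ (this is noted at the start of Section \ref{sscalef}, since $(E)$ gives \eqref{pd2e} and $(TC)\Longleftrightarrow(TD)+\eqref{pd2e}$), and of course $(p_0),(VD)$ are assumed directly. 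Hence from Theorem \ref{tLDUE+} we get immediately that
\begin{equation*}
(DUE)\Longleftrightarrow(UE)\Longleftrightarrow\eqref{PMV}\Longleftrightarrow(MV)\Longleftrightarrow\eqref{MVG}\Longleftrightarrow\eqref{g01},
\end{equation*}
where here $(DUE)$ and $(UE)$ are phrased with $e(x,n)$ and $E(x,d(x,y))$. This already gives the second displayed chain in Corollary \ref{cdue} verbatim, since $\eqref{PMV}\Leftrightarrow(MV)\Leftrightarrow\eqref{g01}\Leftrightarrow\eqref{MVG}$ is a sub-chain of the above.

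The remaining work is the first displayed chain: $(MV)\Leftrightarrow\eqref{duef}\Leftrightarrow(UE_F)$. For this I would show that, under $(E)$, the $x$-dependent diagonal estimate \eqref{LDUE} and the $F$-based one \eqref{duef} are equivalent, and likewise \eqref{UE1} and $(UE_F)$. The key point is that $(E)$ together with Corollary \ref{cFproper} (so $F$ is proper) yields $E(x,R)\simeq F(R)$ with uniform constants, hence the inverses satisfy $e(x,n)\simeq f(n)$ uniformly in $x$ and $n$; combined with $(VD)$ (volume doubling, to absorb the bounded ratio of radii) this gives $V(x,e(x,n))\simeq V(x,f(n))$. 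That handles the leading terms. For the exponential terms one compares $E(x,d(x,y))$ with $F(d(x,y))$ (again $\simeq$ by $(E)$) and then relates the quantity $\bigl(F(R)/n\bigr)^{1/(\beta-1)}$ appearing implicitly in \eqref{UE1} to the sub-Gaussian kernel $k(n,d(x,y))$. The two-sided comparison between these is exactly the content of Lemma \ref{k>}: it gives $k+1\geq c(F(R)/n)^{1/(\beta-1)}$ in one direction, and the matching upper bound $k\leq C(R^\beta/n)^{1/(\beta-1)}$ together with \eqref{ED2F} ($F(R)\le C_F R^\beta$) in the other, so that $k(n,d)$ and $(F(d)/n)^{1/(\beta-1)}$ are comparable up to additive and multiplicative constants — which is all one needs inside an exponential $\exp(-c\,\cdot)$, adjusting $c,C$. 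One also uses $\mu(y)\simeq V(y,1)$ and $V(x,f(n))\simeq V(y,f(n))\exp(\text{small}\cdot k)$-type estimates (as in Lemma \ref{lvv}) to pass between $P_n$ and $p_n$ and between base points $x$ and $y$ in \eqref{UEF1}; this is why the statement of \eqref{duef} and \eqref{UEF1} carries a $\mu(y)$ factor.

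So the skeleton is: (i) check $(TD)+(E)\Rightarrow(TC)$ and invoke Theorem \ref{tLDUE+} to get the second chain and the equivalence of $(MV)$ with the $e$-versions of $(DUE)$ and $(UE)$; (ii) use $(E)$ and properness of $F$ (Corollary \ref{cFproper}) to get $E(x,R)\simeq F(R)$, hence $e(x,n)\simeq f(n)$ and, via $(VD)$, $V(x,e(x,n))\simeq V(x,f(n))$, upgrading the $e$-version of $(DUE)$ to \eqref{duef}; (iii) use Lemma \ref{k>} plus $(VD)$ and the volume-comparison estimate of Lemma \ref{lvv} to upgrade the $e$-version of $(UE)$ to $(UE_F)$, and trivially $(UE_F)\Rightarrow\eqref{duef}$ by taking $y=x$. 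The main obstacle — really the only non-bookkeeping point — is step (iii): one must verify that replacing $\bigl(E(x,d)/n\bigr)^{1/(\beta-1)}$ by $k(n,d)$ in the exponent, and simultaneously swapping the base point of the volume factor from $x$ to $y$ and inserting the $\mu(y)$, does not degrade the Gaussian profile, i.e. that all the error factors are of the form $C_\varepsilon\exp(\varepsilon k)$ and can be absorbed by shrinking the constant $c$ in the exponential. This is exactly the manipulation already carried out in the proof of Theorem \ref{tldue-ue} via Lemma \ref{lvv}, so it transfers with only cosmetic changes; I would simply cite those arguments rather than repeat them.
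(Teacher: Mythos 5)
Your proposal follows exactly the paper's route: the paper's own proof of Corollary \ref{cdue} is the one-line observation that $(TD)+(E)\Rightarrow(TC)$ so that Theorem \ref{tLDUE+} applies, and your steps (ii)--(iii) merely spell out the translation from the $e(x,n)$-based statements to the $F$-based ones (via $E(x,R)\simeq F(R)$, Lemma \ref{k>} and Lemma \ref{lvv}) that the paper leaves implicit. This is correct and essentially the same argument, just with more of the bookkeeping written out.
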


\begin{proof}
The statement is immediate from Theorem \ref{tLDUE+} since $\left( TD\right)
+\left( E\right) \Longrightarrow \left( TC\right) $
\end{proof}

The next step is to insert $\left( \ref{UBG}\right) $ into the set of the
equivalent conditions. \

Before we start the proof we give the next statement which is immediate
consequence of \ \ Proposition \ref{tLDLExy}. \

\begin{proposition}
\label{tDUExy}For any weighted graph $(\Gamma ,\mu )$ if we assume $\left(
E\right) $ then
\begin{equation}
P_{2n}(x,x)\geq \frac{c\mu (x)}{V(x,f(2n))},  \label{DLE}
\end{equation}%
furthermore
\begin{equation}
\mathbb{P}(T_{x,R}<n)\leq C\exp \left[ -ck(n,R)\right] ,  \label{kszi}
\end{equation}%
where $k$ is the maximal integer $1\leq k\leq R\leq n$ satisfying $(\ref%
{kdef})$ and $F$ is defined again by $\left( \ref{fdef}\right) .$
\end{proposition}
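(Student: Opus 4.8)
The plan is to read Proposition~\ref{tDUExy} off Proposition~\ref{tLDLExy}; the only genuine work is to check that the hypothesis $(\ref{ebar1})$ of that proposition, namely $\overline{E}(x,R)\le CE(x,R)$, is available in the present setting. To see this, fix $x\in\Gamma$, $R>0$ and $y\in B(x,R)$. Since $d(x,y)<R$, the triangle inequality gives $B(x,R)\subseteq B(y,2R)$, so the walk leaves $B(x,R)$ no later than it leaves $B(y,2R)$, i.e. $T_{B(x,R)}\le T_{B(y,2R)}$ pathwise; taking $\mathbb{E}_y$ gives $E_y(x,R)\le E(y,2R)$. Now $(E)$, in the form $(\ref{E})$, yields $E(y,2R)\le C_0F(2R)$, the doubling $(\ref{Fd})$ of $F$ yields $F(2R)\le D_EF(R)$, and $F(R)\le E(x,R)$ holds by the definition $F(R)=\inf_zE(z,R)$. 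Combining these, $\overline{E}(x,R)=\max_{y\in B(x,R)}E_y(x,R)\le C_0D_E\,E(x,R)$, which is $(\ref{ebar1})$.

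With $(\ref{ebar1})$ in hand, Proposition~\ref{tLDLExy} applies and yields $P_{2n}(x,x)\ge c\mu(x)/V(x,e(x,2n))$ together with $\mathbb{P}(T_{x,R}<n)\le C\exp(-c\underline{k}(x,n,R))$ (the inverse $e(x,\cdot)$ of $E(x,\cdot)$ exists by Corollary~\ref{cmonot}, and the inverse $f$ of $F$ by Corollary~\ref{cinv}). It then remains to pass from the center-dependent quantities $e$ and $\underline{k}$ to the center-free $f$ and $k$, and this happens in the favourable direction. Since $E(x,R)\ge F(R)$ for every $x$, the set $\{R:E(x,R)\ge 2n\}$ contains $\{R:F(R)\ge 2n\}$, hence $e(x,2n)\le f(2n)$; therefore $B(x,e(x,2n))\subseteq B(x,f(2n))$, so $V(x,e(x,2n))\le V(x,f(2n))$ and $P_{2n}(x,x)\ge c\mu(x)/V(x,e(x,2n))\ge c\mu(x)/V(x,f(2n))$, which is $(\ref{DLE})$. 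Likewise $\min_{y\in B(x,R)}E(y,m)\ge F(m)$ for every $m$, so every integer admissible in the defining inequality $(\ref{kdef})$ of $k(n,R)$ is also admissible in the defining inequality $(\ref{iter})$ of $\underline{k}(n,x,R)$; hence $\underline{k}(x,n,R)\ge k(n,R)$ and $C\exp(-c\underline{k}(x,n,R))\le C\exp(-ck(n,R))$, which is $(\ref{kszi})$.

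I do not expect a real obstacle here. The two points that need attention are: (i) the hypothesis $(\ref{ebar1})$ of Proposition~\ref{tLDLExy} is not literally one of the assumptions and has to be extracted from $(E)$ together with the doubling of $F$, which is where the running assumption $(TD)$ of this subsection enters; and (ii) one must keep every inequality pointing the right way when replacing $E,e,\underline{k}$ by $F,f,k$ — because $F$ is the infimum of $E(\cdot,R)$ over the center, one gets $e(x,\cdot)\le f(\cdot)$ and $\underline{k}\ge k$, and these are exactly the directions that preserve the lower bound on $P_{2n}(x,x)$ and the upper bound on $\mathbb{P}(T_{x,R}<n)$.
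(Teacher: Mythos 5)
Your proposal is correct and follows exactly the route the paper intends: the paper states this proposition as an "immediate consequence" of Proposition~\ref{tLDLExy}, and you supply precisely the missing details — verifying the hypothesis $\overline{E}(x,R)\le CE(x,R)$ from $(E)$ together with the doubling of $F$, and then passing from $e(x,\cdot)$ and $\underline{k}$ to $f$ and $k$ via $E(\cdot,R)\ge F(R)$, with all inequalities in the correct direction. Your observation that the time doubling property is genuinely needed here (beyond the literally stated hypothesis $(E)$) is accurate and consistent with the standing assumptions of that section of the paper.
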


The next step is to show $\left( \ref{UBG}\right) \Longleftrightarrow \left(
UE_{F}\right) $. \ This is done via $\left( MV\right) $.

\begin{theorem}
\label{tDUEeq}Let us assume that $\left( \Gamma ,\mu \right) $ satisfies $%
\left( p_{0}\right) ,$ $\left( VD\right) ,$ $\left( TD\right) $ \ and $%
\left( E\right) $ then the following statements are equivalent

\begin{enumerate}
\item For a fixed $B=B\left( x,R\right) ,$ $y\in B,d=d\left( x,y\right) $
the upper bound for the Green kernel $\left( \ref{UBG}\right) $ holds:
\begin{equation*}
g^{B}\left( y,x\right) \leq C\sum_{i=F\left( d\right) }^{F\left( R\right) }%
\frac{1}{V(x,f\left( i\right) )},
\end{equation*}

\item for all $u\geq 0$ on $\overline{B}\left( x,R\right) $ harmonic
function in $B\left( x.R\right) ,$ the mean value inequality $\left(
MV\right) $ holds
\begin{equation*}
u\left( x\right) \leq \frac{C}{V(x,R)}\sum_{z\in B\left( x,R\right) }u\left(
z\right) \mu \left( z\right) ,
\end{equation*}

\item the upper estimate $\left( UE_{F}\right) $ holds
\begin{equation*}
p_{n}\left( x,y\right) \leq \frac{C}{V\left( x,f\left( n\right) \right) }%
\exp \left[ -ck\left( n,d\right) \right] .
\end{equation*}%
\newline
\newline
\end{enumerate}
\end{theorem}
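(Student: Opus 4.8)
The plan is to establish the cycle of implications $(2)\Rightarrow(3)\Rightarrow(1)\Rightarrow(2)$, using the already-proven equivalences from Corollary \ref{cdue} and the probabilistic lower bounds of Proposition \ref{tDUExy} as the main tools. The implication $(2)\Rightarrow(3)$ is essentially free: under $(p_0),(VD),(TD),(E)$ we have $(TC)$, so Corollary \ref{cdue} already gives $(MV)\Leftrightarrow(UE_F)$; thus it suffices to organize the argument so that this link is invoked. Similarly $(2)\Leftrightarrow(\ref{g01})\Leftrightarrow(\ref{MVG})$ is available, and the substance of the theorem is to bolt $(\ref{UBG})$ onto this chain.

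For $(3)\Rightarrow(1)$, I would fix $B=B(x,R)$, $y\in B$, $d=d(x,y)$, and write the local Green kernel as $g^B(y,x)=\frac{1}{\mu(x)}\sum_{n\ge 0}P_n^B(y,x)\le\frac{1}{\mu(x)}\sum_{n\ge 0}p_n(y,x)$. The tail where $n$ is large is killed because the walk has left $B$ by then: more precisely, split the sum at $n\asymp F(R)$, bound the large-$n$ part using that $g^B$ is controlled by $\bar E(x,R)\simeq E(x,R)\simeq F(R)$ via $(E)$ (this is exactly the estimate $(\ref{g01})$, already equivalent to $(MV)$), and bound the range $F(d)\le n\le C' F(R)$ termwise by $(UE_F)$, namely $p_n(y,x)\le \frac{C\mu(x)}{V(x,f(n))}\exp(-ck(n,d))\le \frac{C\mu(x)}{V(x,f(n))}$. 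The range $n<F(d)$ contributes a geometrically decaying sum in $k(n,d)$ — here I would use the lower bound $k+1\ge c(F(d)/n)^{1/(\beta-1)}$ from Lemma \ref{k>} to see that $\sum_{n<F(d)}\exp(-ck(n,d))$ is comparable to a single term $\frac{C}{V(x,f(F(d)))}=\frac{C}{V(x,d)}$ up to constants, which is absorbed into the first term of the sum $\sum_{i=F(d)}^{C'F(R)}\frac{1}{V(x,f(i))}$ by volume doubling. Summation by groups of dyadic blocks of $n$, together with $V(x,f(\cdot))$ doubling, then yields exactly $(\ref{UBG})$.

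For $(1)\Rightarrow(2)$, I would take $u\ge 0$ harmonic on $B(x,R)$ and use the representation $u(z)=\sum_{w\in \overline{U}}g^{U}(z,w)\nu(w)$ with $U=B(x,2R)$ and $\nu\ge 0$, exactly as in the proof of Lemma \ref{lgs}. Applying $(\ref{UBG})$ (with the larger ball $U$, so the range of summation runs up to $C'F(2R)\simeq F(R)$ by $(TD)$) to $g^U(x,w)$ and comparing with the average of $g^U(z,w)$ over $z\in B(x,R)$, the point is to show $g^U(x,w)\le \frac{C}{V(x,R)}\sum_{z\in B(x,R)}g^U(z,w)\mu(z)$, i.e. $(\ref{MVG})$-type bound, which then plugs into the representation and gives $(MV)$. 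To get this, I would use the diagonal lower bound $(\ref{DLE})$ from Proposition \ref{tDUExy}: for $z\in B(x,R)$ and $n\asymp F(R)$ we have $p_n(z,w)\ge p_{2m}(w,w)\cdot(\text{something})\gtrsim \frac{\mu(w)}{V(w,f(n))}\exp(-ck(n,d(z,w)))$ after a standard chaining step, and $d(z,w)\le 3R$ while $f(n)\asymp R$, so $k(n,d(z,w))=O(1)$ and $V(w,f(n))\simeq V(x,R)$; summing $\sum_z p_n(z,w)\mu(z)$ over a window of $n$ of length $\asymp F(R)$ gives a lower bound $\gtrsim \frac{\mu(w)F(R)}{V(x,R)}$, while $g^U(x,w)=\frac{1}{\mu(w)}\sum_n P_n^U(x,w)\le \frac{1}{\mu(w)}\sum_{n}p_n(x,w)$; matching the upper range $\sum_{i=F(d)}^{C'F(R)}\frac{1}{V(x,f(i))}\le \frac{CF(R)}{V(x,f(F(d)))}=\frac{CF(R)}{V(x,d)}$ against the average gives $(MV)$.

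The main obstacle I anticipate is the bookkeeping in $(1)\Rightarrow(2)$: turning the pointwise upper bound $(\ref{UBG})$ into a genuine averaged mean value inequality requires producing a matching \emph{lower} bound for the average of the Green kernel over the inner ball, and the cleanest route to that lower bound is through the near-diagonal lower estimate $(\ref{DLE})$ plus a Markov-chaining argument over $O(1)$-comparable points — the constants must be tracked so that the window of summation in $n$ has length comparable to $F(R)$, which is where $(E)$ and the anti-doubling property of $F$ are used crucially. By contrast, $(2)\Rightarrow(3)$ and $(3)\Rightarrow(1)$ are mostly reorganizations of results already in hand (Corollary \ref{cdue}, the termwise summation against $(UE_F)$, and Lemma \ref{k>}), so they should go through routinely.
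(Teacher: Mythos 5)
Your organization of $(2)\Rightarrow(3)$ and $(3)\Rightarrow(1)$ matches the paper's: the equivalence $(MV)\Leftrightarrow(UE_{F})$ is imported from Corollary \ref{cdue} and Theorem \ref{tldue-ue}, and $(UE_{F})\Rightarrow\left(\ref{UBG}\right)$ is exactly the paper's three-part splitting $S_{1}+S_{2}+S_{3}$ of $g^{B}(y,x)$ at $F(d)$ and $E(x,R)$, with the small-$n$ block summed geometrically via Lemma \ref{k>}. (One harmless slip: that block is comparable to $F(d)/V(x,d)$, i.e.\ to the whole group of terms $F(d)\leq i\leq C^{\prime}F(d)$ of the target sum, not to a single term.)

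The genuine gap is in $(1)\Rightarrow(2)$. You propose to obtain the averaged bound $\left(\ref{MVG}\right)$ by producing a matching \emph{lower} bound for $\sum_{z\in B(x,R)}g^{U}(z,w)\mu(z)$ from the diagonal lower estimate $\left(\ref{DLE}\right)$ plus a ``standard chaining step'' giving $p_{n}(z,w)\gtrsim \mu(w)/V(w,f(n))$ for $d(z,w)\leq 3R\simeq f(n)$. That near-diagonal lower estimate does not follow from the hypotheses $\left(p_{0}\right),(VD),(TD),(E)$ together with $\left(\ref{UBG}\right)$: it is essentially $(LE_{F})$, which by Theorem \ref{tmain+} requires the elliptic Harnack inequality and is strictly stronger than the upper estimates being characterized here. $\left(\ref{DLE}\right)$ is an on-diagonal bound at a single point, and there is no general way to propagate it off the diagonal without additional input --- this is precisely why the two-sided theory of Section \ref{stwoside} assumes $(H)$. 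The paper closes the loop much more cheaply: from $\left(\ref{UBG}\right)$ one gets $\left(\ref{g01}\right)$ by the one-line monotonicity estimate
\begin{equation*}
g^{B}(y,x)\leq C\sum_{i=F(d)}^{C^{\prime}F(R)}\frac{1}{V\left(x,f(i)\right)}\leq C\frac{C^{\prime}F(R)}{V\left(x,f(F(d))\right)}\leq C\frac{E(x,R)}{V(x,d)},
\end{equation*}
and then $\left(\ref{g01}\right)\Rightarrow(MV)$ is already available from Corollary \ref{cdue} and Theorem \ref{tLDUE+} (via the resolvent bound of Theorem \ref{tgl} and the diagonal upper estimate). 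You should replace your $(1)\Rightarrow(2)$ argument by this route; as written, it rests on a lower bound that the hypotheses do not supply.
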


\begin{proof}
The combination of Corollary \ref{cdue} and Theorem \ref{tldue-ue} verifies $%
\left( MV\right) \Leftrightarrow \left( UE_{F}\right) $. The implication $%
\left( UE_{F}\right) \Longrightarrow \left( \ref{UBG}\right) $ can be shown
as follows. \ Let us assume $\left( p_{0}\right) $,$\left( VD\right) $,$%
\left( TD\right) $,$\left( E\right) $ and $\left( UE_{F}\right) $. \ We can
start from the definition of the local Green kernel for $B=B\left(
x,R\right) $,$d:=d\left( x,y\right) >0,d<R$ and denote $n=F\left( d\right)
<m=E\left( x,R\right) $%
\begin{equation}
g^{B}\left( y,x\right) =\sum_{i=1}^{n-1}p_{i}^{B}\left( y,x\right)
+\sum_{i=n}^{m-1}p_{i}^{B}\left( y,x\right) +\sum_{i=m}^{\infty
}p_{i}^{B}\left( y,x\right) =:S_{1}+S_{2}+S_{3}  \notag
\end{equation}%
\begin{align*}
S_{3}& =\sum_{j=0}^{\infty }\sum_{z\in B}\frac{1}{\mu \left( x\right) }%
P_{j}^{B}\left( y,z\right) P_{m}^{B}\left( z,x\right) \leq
\sum_{j=0}^{\infty }\sum_{z\in B}P_{j}^{B}\left( y,z\right) \max_{z\in
B}p_{m}^{B}\left( z,x\right) \\
& \leq E_{y}\left( x,R\right) \max_{z\in B}\frac{C}{\sqrt{V(z,f(m))V(x,f(m))}%
}
\end{align*}%
and using $\left( VD\right) ,\left( TC\right) $ and $d\left( x,y\right)
,d\left( x,z\right) <R<f\left( m\right) $ we conclude to
\begin{equation*}
\sum_{i=m}^{\infty }p_{i}^{B}\left( y,x\right) \leq C\frac{E\left(
x,R\right) }{V\left( x,R\right) }.
\end{equation*}%
The first term can be estimates as follows using $\left( UE_{F}\right) :$
\begin{equation*}
S_{1}\leq \sum_{i=1}^{n}\frac{C}{V(y,f\left( i\right) )}\exp \left(
-ck\left( i,d\right) \right) ,
\end{equation*}%
using Lemma \ref{k>} it can be bounded, denoting $a=\log _{A_{E}}\left(
d\right) $
\begin{align*}
& \leq C\frac{E\left( x,d\right) }{V\left( x,d\right) }\sum_{i=1}^{n-1}\frac{%
V\left( x,d\right) }{V(y,f\left( i\right) )}\frac{1}{E\left( x,d\right) }%
\exp \left( -c\left( \frac{E\left( x,d\right) }{i}\right) ^{\frac{1}{\beta -1%
}}\right) \\
& \leq C\frac{E\left( x,R\right) }{V\left( x,R\right) }\sum_{j=1}^{a}D_{V}%
\left( \frac{d}{f\left( F\left( \frac{d}{A_{F}^{j}}\right) \right) }\right)
^{\alpha }\frac{2^{-j+1}F(d)}{E\left( x,d\right) }\exp \left( -c\left(
2^{j}\right) ^{\frac{1}{\beta -1}}\right) \\
& \leq C\frac{E\left( x,d\right) }{V\left( x,d\right) }\sum_{j=1}^{a}\left(
\frac{A_{F}^{\alpha }}{2}\right) ^{j}\exp \left( -c\left( 2^{j}\right) ^{%
\frac{1}{\beta -1}}\right)
\end{align*}%
and it is clear that the sum is bounded by a constant independent of $d$ and
$n$\ which results that
\begin{equation*}
S_{1}=\sum_{i=1}^{n-1}p_{i}^{B}\left( y,x\right) \leq C\frac{E\left(
x,d\right) }{V\left( x,d\right) }.
\end{equation*}%
The estimate of the middle term is straightforward from $\left(
UE_{F}\right) ;$%
\begin{equation*}
S_{2}=\sum_{i=n}^{m-1}p_{i}^{B}\left( y,x\right) \leq \sum_{i=n}^{m}\frac{C}{%
V\left( x,f\left( i\right) \right) }.
\end{equation*}%
\ Finally a trivial estimate shows that \ \ \ \ \
\begin{equation}
S_{3}\leq C\frac{E\left( x,R\right) }{V\left( x,R\right) }\leq C\frac{%
F\left( R\right) }{V\left( x,f(F\left( R\right) )\right) }\leq
C\sum_{i=F\left( d\right) }^{F\left( R\right) }\frac{1}{V(x,f\left( i\right)
)},  \label{mid1}
\end{equation}%
\ \ \ \ \
\begin{equation}
S_{1}\leq C\frac{E\left( x,d\right) }{V\left( x,d\right) }\leq C\frac{%
F\left( d\right) }{V\left( x,f(C^{\prime }F\left( d\right) )\right) }\leq
\sum_{i=F\left( d\right) }^{C^{\prime }F\left( d\right) }\frac{1}{%
V(x,f\left( i\right) )}  \label{nid2}
\end{equation}%
which results
\begin{equation}
g^{B}\left( y,w\right) =S_{1}+S_{2}+S_{3}\leq C\sum_{i=F\left( d\right)
}^{C^{\prime }F\left( R\right) }\frac{1}{V(x,f\left( i\right) )}.
\label{mid3}
\end{equation}%
The\ \ next step is \ to show $\left( \ref{UBG}\right) \Longrightarrow
\left( \ref{g01}\right) $%
\begin{equation*}
g^{B}\left( y,x\right) \leq C\sum_{i=F\left( d\right) }^{C^{\prime }F\left(
R\right) }\frac{1}{V(x,f\left( i\right) )}\leq C\frac{C^{\prime }F\left(
R\right) -F\left( d\right) }{V\left( x,f\left( F\left( d\right) \right)
\right) }\leq C\frac{E\left( x,R\right) }{V\left( x,d\right) }
\end{equation*}%
where in the last step the doubling property of $V$ and $F$ was used. \ We
have seen in Corollary \ref{cdue} that $\left( \ref{g01}\right) $ implies $%
\left( \ref{duef}\right) $ and it implies $\left( UE_{F}\right) $, \ hence
we have shown that $\left( \ref{UBG}\right) \Longrightarrow \left(
UE_{F}\right) $.
\end{proof}

\begin{proof}[Proof of Theorem \protect\ref{tDUE+}]
The result follows from Corollary \ref{cdue} and Theorem \ref{tldue-ue} and %
\ref{tDUEeq}.
\end{proof}

\begin{proof}[Proof of Theorem \protect\ref{tsPMVF}]
The proof is evident from Theorem \ref{tDUE+} and Remark \ref{sPMVF}.
\end{proof}

\subsection{The two-sided estimate}

\label{stwoside}In this section we prove Theorem \ref{tmain}. First we
collect several consequences of the elliptic Harnack inequality which enable
us to apply Theorem \ref{tDUE+}, particularly to deduce $\left(
UE_{F}\right) .$

As we indicated the parabolic and elliptic Harnack inequalities play
important\ role in the study of two-sided bound of the heat kernel. \ Here
we give their formal definitions.

\begin{definition}
The weighted graph $\left( \Gamma ,\mu \right) $ satisfies the ($F-$%
parabolic or simply$)$ parabolic Harnack inequality if \ the following
condition holds. For a given profile $\mathcal{C}=\{c_{1},c_{2},c_{3},c_{4},%
\eta \}$, $0<c_{1}<c_{2}<c_{3}<c_{4},0<\eta <1,$ set of constants, there is
a $C_{H}(\mathcal{C})>0$ constant such that for any solution $u\geq 0$ of
the heat equation
\begin{equation*}
Pu_{n}=u_{n+1}
\end{equation*}%
on $\mathcal{U}=[k,k+F(c_{4}R)]\times B(x,R)$ for $k,R\in \mathbb{N}$ the
following is true. On the smaller cylinders defined by
\begin{equation*}
\mathcal{U}^{-}=[k+F(c_{1}R),k+F(c_{2}R)]\times B(x,\eta R)\text{ and }%
\mathcal{U}^{+}=[k+F(c_{3}R),k+F(c_{4}R)]\times B(x,\eta R)
\end{equation*}%
and taking $(n_{-},x_{-})\in \mathcal{U}^{-},(n_{+},x_{+})\in \mathcal{U}%
^{+},d(x_{-},x_{+})\leq n_{+}-n_{-}$ the inequality
\begin{equation*}
u(n_{-},x_{-})\leq C_{H}\widetilde{u}(n_{+},x_{+})
\end{equation*}%
holds, where $\widetilde{u}_{n}=u_{n}+u_{n+1}$ short notation was used. Let
us remark that $C_{H}$ depends on the constants (including $c_{i},\eta
,D_{V},D_{E},A_{V}$) involved.
\end{definition}

It is standard knowledge that if the (classical) parabolic Harnack
inequality holds for a given profile then it is true for arbitrary profile.
\ We have shown\ in \cite{TD} Subsection 7.1 that the same holds in the
general case if $F$ is proper.

\begin{definition}
The weighted graph $\left( \Gamma ,\mu \right) $ satisfies the elliptic
Harnack inequality $(\mathbf{H})$ if there is a $C>0$ such that for all $%
x\in \Gamma $ and $R>0$ and for all $u\geq 0$ harmonic functions on $B(x,2R)$
the following inequality holds
\begin{equation}
\max_{B(x,R)}u\leq C\min_{B(x,R)}u.  \label{H}
\end{equation}
\end{definition}

The elliptic Harnack inequality is a direct consequence of the $F$-parabolic
one as it is true for the classical case.

The main result of this section is the following, which implies Theorem \ref%
{tmain}.\label{conttwo}

\begin{theorem}
\label{tmain+}If a weighted graph $(\Gamma ,\mu )$ satisfies $\left(
p_{0}\right) $ then the following statements are equivalent.

\begin{enumerate}
\item the F-parabolic Harnack inequality hold for a very\ proper $F$,

\item $(UE_{F})$ and $(LE_{F})$ hold for a very proper $F$

\item $(VD),\left( \ref{rrvA}\right) $ and $\left( H\right) $ hold,

\item $(VD),(E)$ and $\left( H\right) $ hold.
\end{enumerate}
\end{theorem}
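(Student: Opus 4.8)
\textbf{Proof plan for Theorem \ref{tmain+}.}

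The plan is to establish the cycle of implications $1 \Rightarrow 2 \Rightarrow 4 \Rightarrow 3 \Rightarrow 1$ (with the equivalence $3 \Leftrightarrow 4$ handled via the Einstein relation), using Theorem \ref{tDUE+} as the engine for the upper bound and Proposition \ref{tDUExy} for the matching lower bound. For $1 \Rightarrow 2$, I would follow the classical Moser-type argument adapted to proper $F$: the $F$-parabolic Harnack inequality applied along a chain of space-time cylinders yields $(UE_F)$ from on-diagonal control and $(LE_F)$ from the near-diagonal lower bound $(\ref{DLE})$, iterated along a geodesic chain of length $k(n,d)$, exactly as in the proof that parabolic Harnack implies two-sided Gaussian bounds in the graph setting (c.f. \cite{De}, \cite{GT2}); here $F$ being proper (in particular $(\ref{adF})$, $(\ref{fsupeaddit})$ and $F(R)\geq cR^2$) is what makes the chaining estimates work and produces the kernel exponent $k(n,d)$. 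The implication $2 \Rightarrow 4$ is the conceptual core: from $(UE_F)$ and $(LE_F)$ one reads off $(VD)$ by comparing $p_n(x,x)\simeq V(x,f(n))^{-1}$ at two scales, one extracts $(E)$ (hence $E\simeq F$) by integrating the lower bound to see $E(x,R)\gtrsim F(R)$ and the upper bound to see $E(x,R)\lesssim F(R)$, and the elliptic Harnack inequality $(H)$ follows from the two-sided bound by the standard argument (write a harmonic $u$ on $B(x,2R)$ as a ratio/limit of heat kernels and compare $p_n$ at nearby points using $(UE_F)$, $(LE_F)$ with comparable exponents).

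For $4 \Rightarrow 3$ and $3 \Rightarrow 4$ I would invoke the Einstein relation: under $(p_0),(VD),(TD),(E)$ Theorem \ref{tLER} gives $E(x,2R)\simeq \rho(x,R,2R)v(x,R,2R)$, so $(E)$ translates directly into $(\ref{rrvA})$, i.e. the uniformity of $\rho v$ in space; conversely, $(VD)$ together with $(\ref{rrvA})$ and $(H)$ should be shown to imply $(TD)$ and $(E)$ — this is where one needs that the elliptic Harnack inequality forces the mean exit time to behave well, using the resistance representation of $E$ and the Harnack inequality to propagate local estimates of $\rho v$ to a genuine doubling statement for $E$. Finally $3 \Rightarrow 1$ (equivalently $4 \Rightarrow 1$) closes the loop: having $(VD),(E),(H)$, Corollary \ref{cFproper} guarantees $F$ is proper, and I would upgrade the elliptic Harnack inequality to the $F$-parabolic one. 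The route is to first get the $F$-based diagonal upper estimate $(\ref{duef})$ — for this I would show $(VD)+(E)+(H)$ implies the mean value inequality $(MV)$ (the elliptic Harnack inequality classically implies $(MV)$ for non-negative harmonic functions), then apply Theorem \ref{tDUE+} to get $(UE_F)$, and combine with the lower bound $(\ref{DLE})$ and $(\ref{kszi})$ from Proposition \ref{tDUExy} and the elliptic Harnack inequality to chain to the full $F$-parabolic Harnack inequality, exactly as in the classical oscillation-decay argument.

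The main obstacle I anticipate is the implication $3 \Rightarrow 4$ (and, intertwined with it, verifying that $F$ is \emph{very} proper, i.e.\ the strong anti-doubling $B_F>A_F$, rather than merely proper): one must derive the time doubling property $(TD)$ and the space-uniformity $(E)$ from the static data $(VD)$, uniformity of $\rho v$, and the elliptic Harnack inequality, without already having any heat-kernel bound in hand. The natural tool is the Einstein relation combined with the elliptic Harnack inequality's ability to compare harmonic functions — hence capacities and resistances — across concentric balls, converting the annulus-wise hypothesis $(\ref{rrvA})$ into a genuine ratio bound $E(x,R)/E(x,r)\simeq (\rho v)(x,\cdot)$ summed over dyadic annuli, but closing this requires care because $\beta'>1$ (strong anti-doubling) must be extracted, and that relies on $E(x,R)\geq cR^2$ (Corollary \ref{cF>2} / Remark \ref{re>r2}) together with a strict-growth input. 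I would treat this step by citing and adapting the resistance estimates of \cite{ter} used in the proof of Theorem \ref{tLER}, and by using $(H)$ to show the resistance of an annulus is comparable at all centers, which is precisely what is needed to pass from $(\ref{rrvA})$ to $(E)$.
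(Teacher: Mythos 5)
Your plan is sound and rests on the same pillars as the paper's own proof: the equivalence $3\Leftrightarrow 4$ via the Einstein relation (Theorems \ref{tslER} and \ref{tslERrv}, both imported from \cite{ter}), the upper estimate via $(H)\Rightarrow (MV)$ fed into Theorem \ref{tDUE+}, and chaining/oscillation arguments for the parabolic Harnack inequality and the lower bound. The difference is in the orientation of the cycle and in how much is proved versus cited. The paper runs $3\Rightarrow 2$ (upper bound from Theorem \ref{tDUE+}, lower bound ``as in \cite{TD}'') and then closes with $2\Rightarrow 1\Rightarrow 3$ by citing \cite[Theorem 2.22]{TD} wholesale, the only new remark being that the annulus resistance doubling needed there follows from Corollary \ref{crvD}; you instead run $1\Rightarrow 2\Rightarrow 4\Rightarrow 3\Rightarrow 1$ and sketch directly the legs the paper delegates --- in particular your $2\Rightarrow 4$ extracts $(VD)$, $(E)$ and $(H)$ straight from the two-sided bounds rather than passing through the parabolic Harnack inequality. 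That is legitimate (and $1\Leftrightarrow 2$ is in any case available from \cite{HS}, as the paper notes), but it puts more work on your shoulders: getting $(H)$ directly from $(UE_F)+(LE_F)$ requires two-sided Green kernel estimates and a representation of harmonic functions, which amounts to re-deriving a chunk of the parabolic-Harnack machinery that \cite{TD} already supplies. One small slip: in extracting $(E)$ you have the roles of the two bounds reversed --- it is the \emph{upper} estimate, through an exit-time bound of the type $\left( \ref{kszi}\right) $, that yields $E(x,R)\gtrsim F(R)$, while $E(x,R)\lesssim F(R)$ comes from integrating the \emph{lower} estimate over the annulus $B(x,A_{V}R)\setminus B(x,R)$ (or from the on-diagonal upper bound together with $(VD)$). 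Finally, you correctly isolate $3\Rightarrow 4$ and the strong anti-doubling $B_{F}>A_{F}$ as the delicate points; the paper disposes of both by citation to \cite{ter} (Theorem \ref{tslERrv} and the proposition preceding the proof, which establishes $\left( \ref{sadF}\right) $), so your intention to adapt the resistance estimates of \cite{ter} is exactly where the paper's argument also lives.
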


\subsubsection{The Einstein relation}

The following five observations are taken from \cite{ter}

\begin{proposition}
\label{prd}If $\left( p_{0}\right) ,\left( VD\right) $ and $\left( H\right) $
holds then the resistance has the doubling properties: there are $%
C,C^{\prime }>1$ such that
\begin{equation}
\frac{\rho (x,R,4R)}{\rho (x,R,2R)}\leq C  \label{rd1}
\end{equation}%
and%
\begin{equation}
\frac{\rho (x,R,4R)}{\rho (x,2R,4R)}\leq C^{\prime }.  \label{rd2}
\end{equation}
\end{proposition}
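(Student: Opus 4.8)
The plan is to extract the two resistance doubling inequalities from the elliptic Harnack inequality via the standard electrical interpretation of resistance in terms of harmonic potentials, together with the volume doubling property. Recall that $\rho(x,R,2R)$ is (the reciprocal of) the effective conductance between $B(x,R)$ and $\Gamma\setminus B(x,2R)$, and the extremal function realizing it is the \emph{equilibrium potential} $h$: the function harmonic on the annulus $B(x,2R)\setminus \overline{B(x,R)}$, equal to $1$ on $B(x,R)$ and $0$ outside $B(x,2R)$. One then has the flow identity
\begin{equation*}
\frac{1}{\rho(x,R,2R)}=\mathcal{E}(h,h)=\mathcal{E}(h,h-\text{(any competitor agreeing on the boundary data)}),
\end{equation*}
and monotonicity of resistance: enlarging the outer region or shrinking the inner one only increases the resistance. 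In particular $\rho(x,R,4R)\ge \rho(x,R,2R)$ and $\rho(x,R,4R)\ge\rho(x,2R,4R)$ hold trivially, so only the reverse (upper) bounds require work.

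For \eqref{rd1} I would argue as follows. Let $h$ be the equilibrium potential of the pair $(B(x,R),\Gamma\setminus B(x,2R))$, so $h$ is harmonic in the annulus and $1/\rho(x,R,2R)=\mathcal{E}(h,h)$. Extend $h$ by $0$ on $\Gamma\setminus B(x,2R)$ and by $1$ on $B(x,R)$; this extended $h$ is an admissible competitor for $\rho(x,R,4R)^{-1}$ only after we replace the outer boundary condition, so instead consider the equilibrium potential $g$ for $(B(x,R),\Gamma\setminus B(x,4R))$. The function $1-g$ is harmonic and positive on $B(x,4R)\setminus\overline{B(x,R)}$; applying $(H)$ on the ball $B(x,2R)\subset B(x,4R)$ (more precisely on a suitable sub-ball so that $B(x,2R)$ plays the role of the inner ball and $B(x,4R)$ the outer one — this is where one may need to iterate $(H)$ a bounded number of times along a chain of balls covering the sphere $S(x,2R)$) gives that $1-g$ is bounded above and below by comparable constants, say $\theta\le 1-g\le 1$ on $S(x,2R)$, with $\theta\in(0,1)$ depending only on $C_H$ and the doubling constants. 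Hence $g\le 1-\theta$ on $B(x,2R)\setminus B(x,R)$ and $\frac{1}{1-\theta}\,g$ is a valid competitor for $\rho(x,R,2R)^{-1}$ on the inner annulus, while $g$ itself restricted outside is controlled; combining the Dirichlet-energy comparisons across the two annuli $B(x,2R)\setminus B(x,R)$ and $B(x,4R)\setminus B(x,2R)$ yields $\rho(x,R,4R)\le C\,\rho(x,R,2R)$. Inequality \eqref{rd2} is dual: one runs the same comparison with the roles of inner and outer regions interchanged, using $(H)$ to show the equilibrium potential of $(B(x,R),\Gamma\setminus B(x,4R))$ is bounded below by a fixed constant on $S(x,2R)$, so that it is, up to a constant factor, an admissible competitor for the annulus $B(x,4R)\setminus B(x,2R)$.

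The main obstacle I anticipate is the passage from the \emph{pointwise} Harnack inequality \eqref{H}, which compares $\max$ and $\min$ of a harmonic function on concentric balls of the \emph{same} center, to a statement about the equilibrium potential on the whole annular sphere $S(x,2R)$: one must chain $(H)$ along an overlapping sequence of balls of radius $\sim R$ whose centers lie on $S(x,2R)$, and control the number of such balls — this is exactly where $(VD)$ enters, bounding that number by a constant via $(\ref{deg})$ and the volume comparison $(VC)$. The rest is the (standard but slightly tedious) bookkeeping of Dirichlet energies across nested annuli, which by $(\ref{V3})$ and the Einstein relation (Theorem \ref{tLER}) can be done uniformly in $x$ and $R$. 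Since the author states these five observations are taken from \cite{ter}, I expect the actual proof to simply cite that reference and perhaps sketch the chaining argument, exactly as in the analogous steps elsewhere in the paper.
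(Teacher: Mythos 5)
The paper itself contains no proof of Proposition \ref{prd}: the line immediately following it is ``For the proof see \cite{ter}'', exactly as you predicted in your closing sentence. So there is nothing in-paper to compare against line by line, and the only question is whether your reconstruction is sound. Your overall strategy --- monotonicity of resistance gives the two trivial directions, and the nontrivial bounds $(\ref{rd1})$, $(\ref{rd2})$ follow once the equilibrium potential $g$ of the pair $\left( B(x,R),\Gamma \setminus B(x,4R)\right) $ is shown to be pinched between two absolute constants in $(0,1)$ on $S(x,2R)$, after which one splits the Dirichlet energy across the two sub-annuli --- is the standard and correct one.

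The genuine gap is the step you yourself flag as ``the main obstacle'' but then wave through: chaining $(H)$ along $S(x,2R)$. Volume doubling bounds the \emph{number} of balls of radius comparable to $R$ needed to cover $S(x,2R)$, but it does not guarantee that two points of that sphere can be joined by a chain of overlapping balls lying inside the region where $1-g$ is harmonic and positive, namely $B(x,4R)\setminus \overline{B(x,R)}$; annuli of a graph satisfying only $\left( p_{0}\right) $ and $(VD)$ need not be connected, and a geodesic between two points of $S(x,2R)$ may pass through the inner ball. This is precisely where the arguments in \cite{GT2} and \cite{ter} do something different: they work with the Green kernel $g^{B(x,4R)}(x,\cdot )$, which is harmonic and positive on all of $B(x,4R)\setminus \{x\}$ (so Harnack balls centred at arbitrary points of the sphere are admissible and chains may pass near the centre), locate the extrema of that kernel on spheres via the maximum principle, and identify its boundary values with the annulus resistances up to constants; your energy bookkeeping then goes through. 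A second, smaller problem: you invoke the Einstein relation via Theorem \ref{tLER}, but that theorem assumes $\left( TC\right) $, which is not among the hypotheses of Proposition \ref{prd}; and in the paper's architecture the resistance doubling of Proposition \ref{prd} is an ingredient \emph{for} the Einstein relation under $\left( H\right) $ (Theorems \ref{tslER} and \ref{tslERrv}), so appealing to it here would be either unavailable or circular. Neither defect is fatal to the plan, but both must be repaired before the sketch is a proof.
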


For the proof see \cite{ter}. The next corollary is trivial consequence of
Proposition \ref{prd}.

\begin{corollary}
\label{crvD}If $\left( p_{0}\right) ,\left( VD\right) $ and $\left( H\right)
$ holds then there is a constant $C>1$ such that
\begin{equation}
\frac{\rho (x,2R,4R)v(x,2R,4R)}{\rho (x,R,2R)v(x,R,2R)}\leq C.
\end{equation}
\end{corollary}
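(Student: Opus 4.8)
The goal is to estimate the product $\rho(x,2R,4R)v(x,2R,4R)$ from above by a constant multiple of $\rho(x,R,2R)v(x,R,2R)$. The plan is to split this into the resistance factor and the volume factor and treat them separately. For the resistance factor, Proposition \ref{prd} gives us directly the two inequalities $\rho(x,R,4R)\leq C\rho(x,R,2R)$ and $\rho(x,R,4R)\leq C'\rho(x,2R,4R)$. The first of these immediately yields $\rho(x,2R,4R)\leq\rho(x,R,4R)\leq C\rho(x,R,2R)$, using the trivial monotonicity $\rho(x,2R,4R)\leq\rho(x,R,4R)$ (the annulus $B(x,4R)\setminus B(x,2R)$ is ``narrower'' than $B(x,4R)\setminus B(x,R)$, so by the variational definition of resistance with the extremal function constrained on a larger set the resistance can only decrease). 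So the ratio of resistances is bounded.

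For the volume factor, I would use the volume doubling property together with \eqref{V3}. Indeed, by $\left(VD\right)$ applied once or twice, $V(x,4R)\leq D_V^2 V(x,R)$, hence
\begin{equation*}
v(x,2R,4R)=V(x,4R)-V(x,2R)\leq V(x,4R)\leq D_V^2 V(x,R).
\end{equation*}
On the other hand, by the anti-doubling consequence of $\left(VD\right)$ recorded in \eqref{V3}, $v(x,R,2R)\simeq V(x,R)$, so $V(x,R)\leq c^{-1}v(x,R,2R)$ for some constant $c>0$. Combining these two, $v(x,2R,4R)\leq (D_V^2/c)\, v(x,R,2R)$, so the ratio of annulus-volumes is also bounded by a constant depending only on $D_V$.

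\textbf{Conclusion.} Multiplying the two bounds gives
\begin{equation*}
\frac{\rho(x,2R,4R)v(x,2R,4R)}{\rho(x,R,2R)v(x,R,2R)}=\frac{\rho(x,2R,4R)}{\rho(x,R,2R)}\cdot\frac{v(x,2R,4R)}{v(x,R,2R)}\leq C\cdot\frac{D_V^2}{c}=:C_1,
\end{equation*}
which is the claimed inequality. The only genuine content is Proposition \ref{prd}, which is cited from \cite{ter}; everything else is a one-line manipulation using monotonicity of resistance and the doubling/anti-doubling of the volume. I expect no real obstacle here — the corollary is exactly what its label (``trivial consequence of Proposition \ref{prd}'') promises — the only point requiring a word of care is the monotonicity step $\rho(x,2R,4R)\leq\rho(x,R,4R)$, which one should note follows directly from the variational formula for $\rho$ since the admissible class for the wider annulus is contained in that for the narrower one.
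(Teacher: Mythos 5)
Your proof is correct and fills in exactly the argument the paper intends: the paper offers no written proof, calling the corollary a "trivial consequence of Proposition \ref{prd}", and the intended route is precisely yours — bound the resistance ratio via $\rho(x,2R,4R)\leq\rho(x,R,4R)\leq C\rho(x,R,2R)$ using monotonicity and $(\ref{rd1})$, and bound the annulus-volume ratio via $(VD)$ together with $v(x,R,2R)\simeq V(x,R)$ from $(\ref{V3})$ (which is where the hypothesis $(p_{0})$ is used). Your care about the direction of the monotonicity of $\rho$ under enlarging the inner plate is well placed and correctly resolved.
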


\begin{theorem}
\label{tslER}If for $\left( \Gamma ,\mu \right) $ conditions $\left(
p_{0}\right) ,$ $(VD),(H)$ and $(E)$ hold then
\begin{equation*}
E(x,2R)\simeq \rho (x,R,2R)v(x,R,2R).
\end{equation*}
\end{theorem}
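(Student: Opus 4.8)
The goal is to upgrade the general inequality $\rho(x,R,2R)v(x,R,2R) \geq (2R-R)^2 = R^2$ from Corollary \ref{crav>l2} (which does not use any hypothesis) into the two-sided comparison $E(x,2R) \simeq \rho(x,R,2R)v(x,R,2R)$. Since $(VD) + (H) + (E) \Longrightarrow (TC)$ would give us Theorem \ref{tLER} for free \emph{if} we already knew $(TC)$; but here the assumptions are $(p_0), (VD), (H), (E)$, and $(TC)$ is not directly among them. So the real content is to show that under the elliptic Harnack inequality one can still run the Einstein-relation argument. The plan is to prove the two inequalities $E(x,2R) \leq C \rho(x,R,2R)v(x,R,2R)$ and $\rho(x,R,2R)v(x,R,2R) \leq C E(x,2R)$ separately.

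First I would establish the lower bound $E(x,2R) \geq c\,\rho(x,R,2R)v(x,R,2R)$, or rather its reverse. The standard route (as in \cite{ter}, \cite{T1}) is to relate the mean exit time to the Green function of the killed walk: $E(x,R) = \sum_{y \in B(x,R)} g^{B(x,R)}(x,y)\mu(y)$, and to compare $g^{B(x,R)}(x,x)$ with the resistance $\rho(x,0,R)$ (up to the $(p_0)$-controlled local corrections, $\rho(x,0,R) \simeq \rho(x,R,2R)$ modulo annulus-vs-ball issues). The elliptic Harnack inequality $(H)$ is what lets one pass from pointwise values of $g^B$ on the inner ball to an average comparable to $V(x,R)\cdot g^B(x,x)$, and hence to show $E(x,2R) \simeq g^{B(x,2R)}(x,x)\,V(x,2R)$ up to constants; combined with $g^{B(x,2R)}(x,x) \simeq \rho(x,R,2R)$ (using Proposition \ref{prd} and Corollary \ref{crvD} to control the resistance of the annulus against the resistance from the center) and $v(x,R,2R) \simeq V(x,R) \simeq V(x,2R)$ from $(\ref{V3})$, this gives one direction. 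The opposite direction uses the variational/trace characterization of resistance together with $(E)$ to bound $\rho(x,R,2R)v(x,R,2R)$ from above by a constant times $E(x,2R)$: one takes the equilibrium potential of the annulus as a test function and estimates the Dirichlet energy, using $(H)$ to say the potential is roughly constant on dyadic scales and using $(E)$ to identify the resulting scale-by-scale sum with $E$.

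The main obstacle I expect is precisely the bookkeeping needed to replace $(TC)$ — which was the clean hypothesis in Theorem \ref{tLER} — by the combination $(VD) + (H) + (E)$. Concretely, the delicate step is showing that the resistance across a single dyadic annulus, $\rho(x,R,2R)$, is comparable to the "total" resistance $\rho(x, c, R)$ from a bounded-size core out to radius $R$; this requires summing the geometric-type series $\sum_j \rho(x, 2^{j}, 2^{j+1})$ and checking it telescopes up to a constant multiple of its largest term, which is exactly where the resistance doubling estimates $(\ref{rd1})$–$(\ref{rd2})$ of Proposition \ref{prd} and Corollary \ref{crvD} enter. Once that comparison is in hand, the identity $E(x,2R) = \sum_y g^{B(x,2R)}(x,y)\mu(y)$ and the Harnack-averaging of $g^B$ close the argument; the details are the ones carried out in \cite{ter}, so after assembling the ingredients above I would simply cite \cite{ter} for the remaining computations rather than reproducing them.
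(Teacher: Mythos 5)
The paper gives no proof of Theorem \ref{tslER} at all: it is listed among the ``five observations taken from \cite{ter}'', so the only point of contact between your proposal and the paper is the final deferral to that reference. The difficulty is that the scaffolding you propose to build around that citation is not a viable route to the statement. Two of your intermediate claims --- that $(H)$ yields $E(x,2R)\simeq g^{B(x,2R)}(x,x)\,V(x,2R)$, and that the single-annulus resistance $\rho(x,R,2R)$ is comparable to the total resistance $\rho(x,c,R)$ from a bounded core out to radius $R$ --- are precisely the signatures of the \emph{strongly recurrent} case, and they fail under the stated hypotheses. Take $\Gamma=\mathbb{Z}^{3}$ with unit weights: $(p_{0})$, $(VD)$, $(H)$ and $(E)$ all hold, yet $g^{B(x,2R)}(x,x)=\rho\left(\{x\},B(x,2R)^{c}\right)\simeq 1$ by transience while $V(x,2R)\simeq R^{3}$, so $g^{B}(x,x)V(x,2R)\simeq R^{3}$ is nowhere near $E(x,2R)\simeq R^{2}$; likewise $\rho(x,1,R)\simeq 1$ whereas $\rho(x,R,2R)\simeq R^{-1}$, so the series $\sum_{j}\rho(x,2^{j},2^{j+1})$ is dominated by its \emph{smallest} scale and no bookkeeping with Proposition \ref{prd} and Corollary \ref{crvD} will make it telescope to its largest term. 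Your two false comparisons cancel in the product, which is why the conclusion $\rho v\simeq R^{2}$ looks right, but the chain is broken; chaining the Harnack inequality from the boundary region of $g^{B}(x,\cdot)$ back to $x$ is impossible because of the singularity at $x$. Note that the introduction states explicitly that the purpose of this paper is to drop the strong recurrence assumption of \cite{TD}; your sketch quietly reinstates it.

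A workable argument (the one carried out in \cite{ter}, and in the analogous statements of \cite{GT2}) has to treat the two inequalities asymmetrically: the bound $\rho(x,R,2R)v(x,R,2R)\leq CE(x,2R)$ comes from testing against the equilibrium potential of the annulus and using $(H)$ only to show that this potential is nearly constant on spheres, while the bound $E(x,2R)\leq C\rho(x,R,2R)v(x,R,2R)$ is obtained by building an explicit superharmonic upper barrier for the exit time out of annulus potentials; nowhere does one identify $E$ with $g^{B}(x,x)V$. Since you end by citing \cite{ter} in any case, the deferral itself is legitimate, but the ingredients you propose to assemble before invoking it would have to be discarded and replaced.
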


\begin{theorem}
\label{tslERrv}If for the weighted graph $\left( \Gamma ,\mu \right) $ the
conditions $\left( p_{0}\right) ,$ $(VD),\left( H\right) $ and $\left( \ref%
{rrvA}\right) ,$ which is
\begin{equation*}
\rho v(x,R,2R)v(x,R,2R)\simeq \rho v(y,R,2R)v(y,R,2R).
\end{equation*}%
hold then
\begin{equation*}
E(x,2R)\simeq \rho (x,R,2R)v(x,R,2R).
\end{equation*}
\end{theorem}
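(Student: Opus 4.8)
The goal is the Einstein relation $E(x,2R)\simeq \rho(x,R,2R)v(x,R,2R)$ under $(p_0)$, $(VD)$, $(H)$ and $(\ref{rrvA})$. The natural strategy is to reduce this to Theorem \ref{tslER}, which gives exactly the same conclusion but under the hypothesis $(E)$ in place of $(\ref{rrvA})$. So the plan is: \emph{show that $(p_0)+(VD)+(H)+(\ref{rrvA})$ implies $(E)$}, and then invoke Theorem \ref{tslER} directly.

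First I would set up the dyadic telescoping. Define $\psi(x,R)=\rho(x,R,2R)v(x,R,2R)$. By Corollary \ref{crvD}, under $(p_0)+(VD)+(H)$ we have the uniform comparison $\psi(x,2R)\simeq\psi(x,R)$ — i.e. $\psi$ is doubling and anti-doubling in $R$ for each fixed $x$. Combined with $(\ref{rrvA})$, which says $\psi(x,R)\simeq\psi(y,R)$ for all $x,y$, one gets that $\psi(x,R)\simeq\psi(y,S)$ whenever $R\simeq S$, uniformly in the centers. Next I would produce a two-sided estimate for $E$ in terms of $\psi$ that does \emph{not} already presuppose $(E)$. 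The lower bound $E(x,2R)\gtrsim\psi(x,R)$ should follow from a resistance/commute-time argument (this is the ``easy'' half of the Einstein relation and is available on all graphs, cf.\ the discussion around $(\ref{rv>2})$ and \cite{ter}). For the upper bound $E(x,2R)\lesssim\psi(x,R)$ one telescopes over dyadic annuli: $E(x,2^{k+1}R_0)\le E(x,2^k R_0)+\overline E$ of the outer shell plus harmonic-measure estimates, and the elliptic Harnack inequality $(H)$ controls the harmonic measure and the shell exit times in terms of $\rho$ and $v$ on that annulus; summing the geometric series (which converges because of the anti-doubling of $\psi$) gives $E(x,R)\lesssim\sum_{j}\psi(x,2^jR)\lesssim\psi(x,R)$.

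Having $E(x,R)\simeq\psi(x,R)$ with constants uniform in $x$, condition $(E)$ is then immediate: $E(x,R)\simeq\psi(x,R)\simeq\psi(y,R)\simeq E(y,R)$, where the middle comparison is precisely $(\ref{rrvA})$. With $(E)$ in hand, Theorem \ref{tslER} applies verbatim and yields $E(x,2R)\simeq\rho(x,R,2R)v(x,R,2R)$, completing the proof.

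The main obstacle I anticipate is the telescoping upper bound $E(x,R)\lesssim\psi(x,R)$ without circularly assuming $(E)$ or $(TD)$: one needs the harmonic-measure escape probabilities on each dyadic annulus to be uniformly bounded away from $1$, and the per-shell exit-time contribution to be comparable to $\psi$ on that shell, both of which rely on carefully invoking $(H)$ together with the resistance doubling of Proposition \ref{prd}. Once the geometric series is seen to converge via the anti-doubling half of Corollary \ref{crvD}, the rest is bookkeeping with the doubling constants.
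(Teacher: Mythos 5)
First, note that the paper does not actually prove this theorem: it is one of the ``five observations\ldots taken from \cite{ter}'', so there is no in-text argument to compare yours against, and your proposal has to stand on its own. Evaluated that way, it has a genuine gap. Your plan front-loads all of the content into the claim that $E(x,2R)\simeq \rho(x,R,2R)v(x,R,2R)$ holds \emph{pointwise} as a consequence of $(p_0)+(VD)+(H)$ together with Corollary \ref{crvD} alone, with $(\ref{rrvA})$ entering only at the very end to transfer uniformity and deduce $(E)$. But that pointwise two-sided bound \emph{is} the conclusion of the theorem; if it really followed from the local hypotheses $(p_0)+(VD)+(H)$, then neither the hypothesis $(E)$ in Theorem \ref{tslER} nor $(\ref{rrvA})$ here would be needed, and the equivalence $3\Leftrightarrow 4$ of Theorem \ref{tmain+} would be vacuous. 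The structure of the paper (two parallel theorems, each demanding one global uniformity hypothesis) is strong evidence that the global hypothesis must be used \emph{inside} the two-sided bound. Concretely, the weak point is exactly the one you flag as ``the main obstacle'': the telescoping upper bound $E(x,2R)\lesssim\sum_j \rho v(x,2^{-j}R,2^{-j+1}R)\lesssim\rho v(x,R,2R)$ requires the dyadic sum to be dominated by its top term, i.e.\ an anti-doubling of $\rho v$ with constant strictly greater than $1$ across scales. Corollary \ref{crvD} only gives two-sided comparability of \emph{consecutive} scales, which is compatible with the sum being of order $(\log R)\cdot\rho v(x,R,2R)$; the strong anti-doubling $(\ref{sadF})$ is stated in the paper as a \emph{consequence} of the full hypothesis set including $(\ref{rrvA})$, not of $(p_0)+(VD)+(H)$ alone. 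So your geometric series is not known to converge at the point where you need it, and this is precisely where $(\ref{rrvA})$ (or $(E)$) has to be injected.

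Two smaller but real problems. The claim that the lower bound $E(x,2R)\gtrsim\rho(x,R,2R)v(x,R,2R)$ is ``available on all graphs, cf.\ $(\ref{rv>2})$'' misreads Corollary \ref{crav>l2}: that inequality bounds $\rho v$ from below by $(r-s)^2$ and says nothing about $E$. The lower half of the Einstein relation needs $(H)$ (to bound the Green kernel from below on the inner ball) together with $(VD)$; it is fine under the theorem's hypotheses, but it is not a free general fact. Finally, the closing appeal to Theorem \ref{tslER} is redundant: once you have $E(x,2R)\simeq\rho v(x,R,2R)$ you already have the conclusion, so the ``reduction'' buys nothing and merely disguises the fact that the hard step is a direct proof of the Einstein relation. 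A repaired version of your plan would run the upper-bound telescoping with $(\ref{rrvA})$ in hand from the start, using it both to control $E_y(x,2R)$ for off-center $y$ (where the relevant annuli are centered at $y$, not $x$) and to obtain the strong anti-doubling of $\rho v$ that makes the dyadic sum collapse.
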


\begin{proposition}
If $\left( \Gamma ,\mu \right) $ satisfies $\left( p_{0}\right) ,\left(
VD\right) ,\left( H\right) $ and $\left( E\right) $ ( or $\left( \ref{rrvA}%
\right) $ ) then the the function $F$%
\begin{equation*}
F\left( R\right) =\inf_{x\in \Gamma }\rho (x,R,2R)v(x,R,2R)
\end{equation*}%
is proper furthermore the strong ant-doubling property holds. The latter
means that there are $\ B_{F}>A_{F}>1$\ such that
\begin{equation}
F(A_{F}R)\geq B_{F}F(R)  \label{sadF}
\end{equation}%
for all $R>0.$ In short, under the conditions $F$ if very proper.
\end{proposition}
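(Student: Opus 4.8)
The plan is to translate the statement about $\rho(x,R,2R)v(x,R,2R)$ into one about the mean exit time, quote Corollary~\ref{cFproper}, and then deal with the strong anti-doubling separately. First I would use the Einstein relation to identify $F$ up to constants with the mean-exit-time scale: under $(p_{0}),(VD),(H)$ together with $(E)$, Theorem~\ref{tslER} gives $E(x,2R)\simeq\rho(x,R,2R)v(x,R,2R)$ with constants independent of $x$ and $R$; under $(p_{0}),(VD),(H)$ together with $(\ref{rrvA})$ the same follows from Theorem~\ref{tslERrv}. Taking the infimum over $x\in\Gamma$ on both sides yields $F(R)\simeq\widetilde F(2R)$, where $\widetilde F(R):=\inf_{x\in\Gamma}E(x,R)$ is the function of $(\ref{fdef})$. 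Hence it suffices to prove that $\widetilde F$ is proper and strongly anti-doubling, and then to transport these properties through $F(R)\simeq\widetilde F(2R)$.

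Second, I would observe that $(TD)$ is already forced, so that Corollary~\ref{cFproper} applies. By Proposition~\ref{prd} the resistance enjoys the doubling properties $(\ref{rd1})$ and $(\ref{rd2})$, hence by Corollary~\ref{crvD} one has $\rho(x,2R,4R)v(x,2R,4R)\le C\,\rho(x,R,2R)v(x,R,2R)$; combining this with the Einstein relation gives $E(x,4R)\le C'E(x,2R)$, and using $(\ref{e>e+1})$ to pass between consecutive radii upgrades it to $(TD)$. Now $(p_{0}),(VD),(TD),(E)$ all hold, so Corollary~\ref{cFproper} shows $\widetilde F$ is proper: strictly increasing and satisfying $(\ref{Fd})$, $(\ref{fsupeaddit})$, $(\ref{adF})$ and $(\ref{F>R2})$. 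The doubling $(\ref{Fd})$ and the quadratic lower bound $(\ref{F>R2})$ transfer to $F$ at once from $F(R)\simeq\widetilde F(2R)$; for the strict monotonicity and super-additivity demanded by Definition~\ref{dpropf} one may simply work with the comparable function $R\mapsto\widetilde F(2R)$, which is exactly strictly increasing and exactly super-additive by Corollary~\ref{cFlin}.

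The remaining and, I expect, only genuinely delicate point is the strong anti-doubling $(\ref{sadF})$ with $B_{F}>A_{F}$, which is strictly more than $(\ref{fsupeaddit})$ yields. Here one must use the resistance estimates of \cite{ter} quantitatively: Corollary~\ref{crav>l2} forces $\rho(x,s,r)v(x,s,r)\ge(r-s)^{2}$, so on a fat annulus the product grows at least quadratically in the width $r-s$, whereas Proposition~\ref{prd} (resistance doubling) and $(VD)$ (together with $(\ref{aVD})$) confine the multiplicative fluctuation of the annular resistance and annular volume over a single doubling of the radius. Weighing the forced quadratic growth against this bounded fluctuation over a fixed, sufficiently large number of doublings produces a radius ratio $A_{F}$ over which $\rho(x,R,2R)v(x,R,2R)$, hence — by the Einstein relation of the first step — $E(x,2R)$, hence $\widetilde F$ and $F$, increase by a factor exceeding $A_{F}$, which is exactly $(\ref{sadF})$ with $B_{F}>A_{F}$. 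The bookkeeping that keeps this inequality strict (rather than merely $B_{F}=A_{F}$, the output of plain super-additivity) is the crux, and it is essentially the corresponding computation in \cite{ter}; everything else is a transcription of results already in the excerpt. Assembling these steps, $F$ meets every requirement of Definition~\ref{dpropf} and also $(\ref{sadF})$ with $B_{F}>A_{F}$, i.e. $F$ is very proper.
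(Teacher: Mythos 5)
Your proposal is correct and follows essentially the same route as the paper, whose entire proof is to invoke Corollary~\ref{cFproper} for properness and to refer the strong anti-doubling $(\ref{sadF})$ to \cite{ter}. You merely make explicit the steps the paper leaves implicit --- the Einstein relation (Theorems~\ref{tslER}/\ref{tslERrv}) identifying $\inf_x\rho(x,R,2R)v(x,R,2R)$ with $\inf_x E(x,2R)$, and the derivation of $(TD)$ from Proposition~\ref{prd} and Corollary~\ref{crvD} so that Corollary~\ref{cFproper} applies --- while, like the paper, ultimately deferring the quantitative strong anti-doubling computation to \cite{ter}.
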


\begin{proof}
From Corollary \ref{cFproper} we know that $F$ is proper and $\left( \ref%
{sadF}\right) $ is shown in \cite{ter} under the conditions.
\end{proof}

\begin{proof}[Proof of Theorem \protect\ref{tmain+}]
The implication $4.\Longrightarrow 3.$ \ is given in Theorem \ref{tslER} and
$3.\Longrightarrow 4$ in Theorem \ref{tslERrv}, $3.\Longrightarrow 2.$ needs
the implication%
\begin{equation*}
\left( p_{0}\right) +\left( VD\right) +\left( TD\right) +\left( H\right)
+(E)\Longrightarrow \left( \ref{duef}\right) ,(UE_{F})
\end{equation*}%
which follows from Theorem \ref{tDUE+} since $\left( H\right)
\Longrightarrow \left( MV\right) .$ The proof of the lower estimate works as
in \cite{TD}. \ The return route $2.\Longrightarrow 1.\Longrightarrow 3.$
also has been shown in \cite[Theorem 2.22]{TD}. \ The only minor
modification is that the condition of annulus resistance doubling ( (2.6)
there ) follows from the doubling property of $F$ and $\rho v$ by Corollary %
\ref{crvD}.
\end{proof}

\section{Example}

\label{sexample}In this section we describe in details of the example of the
stretched Vicsek tree mentioned in the introduction. We show that it
satisfies the conditions of Theorem \ref{tLDUE} and \ref{tLDUE+}.

Let $G_{i}$ is the subgraph of the Vicsek tree (c.f.
\cite{GT2}\ ) which contains the root
$\ z_{0}$ and has diameter $D_{i}=23^{i}$. Let us denote by $z_{i}$ the
vertices on the infinite path, $d\left( z_{0},z_{i}\right) =D_{i}$. \ Denote
$G_{i}^{\prime }=G_{i}\backslash G_{i-1}\cup \left\{ z_{i-1}\right\} $ for $%
i>0,$ the annulus defined by $G$-s.

The new graph is defined by stretching the Vicsek tree as follows. \
Consider the subgraphs $G_{i}^{\prime }$ and replace all the edges of them
by a path of length $i+1$. Denote the new subgraph by $A_{i},$ the new
blocks by $\Gamma _{i}=\cup _{j=0}^{i}A_{i},$ then the new graphs is $\Gamma
=\cup _{j=0}^{\infty }A_{j}$. We denote by $z_{i}$ the cut point between $%
A_{i}$ \ and $A_{i-1}$ again.\ For $x\neq y,x\sim y$ let $\mu _{x,y}=1$.

One can see that neither the volume nor the mean exit time grows
polynomially on $\Gamma $ and both are not uniform on it. On the other hand $%
\Gamma $\ is a tree and the resistance grows asymptotically linearly on it.
We show that $\left( VD\right) $ and $\left( TC\right) $ holds on $\Gamma $
furthermore the elliptic Harnack inequality holds.

Let us recognize some straightforward relations first
\begin{eqnarray}
d\left( z_{0},z_{n}\right) &=&d\left( z_{0},z_{n-1}\right) +2n3^{n}<n3^{n+1}
\label{ln} \\
&<&\left( n+2\right) 3^{n+1},
\end{eqnarray}%
\begin{equation*}
\mu \left( \Gamma _{n}\right) =C\left( 4+\sum_{i=1}^{n}2\left( i+1\right)
4^{i}\right) \simeq n4^{n}\simeq \mu \left( A_{n}\right) ,
\end{equation*}%
\begin{equation*}
\rho \left( \left\{ x\right\} ,B\left( x,R\right) ^{c}\right) \simeq \rho
\left( x,R,2R\right) \simeq R,
\end{equation*}%
\begin{equation*}
E\left( x,R\right) \leq CRV\left( x,R\right) .
\end{equation*}

\begin{lemma}
The tree $\Gamma $ \ satisfies $\left( VD\right) $.
\end{lemma}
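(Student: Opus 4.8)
The volume doubling property $V(x,2R)\le D_V V(x,R)$ must be verified for all centers $x\in\Gamma$ and all radii $R>0$. The natural strategy is to split by cases according to where $x$ sits relative to the block structure $\Gamma_n=\cup_{j\le n}A_j$ and how $R$ compares to the diameters $D_n$ (after stretching). First I would record the two regimes that matter: the \emph{small-scale} regime, where the ball $B(x,R)$ is contained in (or comparable to) a single stretched path-segment or a single block $A_n$, and the \emph{large-scale} regime, where $B(x,R)$ already swallows the block $\Gamma_n$ containing $x$ and then behaves like a (scaled) ball in the Vicsek tree. In the small-scale regime the stretched path-segments of length $i+1$ behave like intervals in $\mathbb Z$, so $V(x,2R)\simeq V(x,R)$ holds essentially because one-dimensional volume is linear; here one also uses $(p_0)$ (all $\mu_{x,y}=1$ and bounded degree) so that $V(x,R)\simeq \#B(x,R)$ up to constants.

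**Key steps in order.** (1) Establish the comparison $V(\Gamma_n)\simeq n4^n\simeq \mu(A_n)$ already quoted in the excerpt, and the distance estimate $d(z_0,z_n)\simeq n3^n$ from $(\ref{ln})$; these give the dictionary between "radius $R$" and "block index $n$", namely $R\asymp n3^n$ corresponds to reaching $z_n$. (2) For a center $x\in A_n$ (interior of a distant block), treat $R\le \tfrac12 D_n$: then the ball stays within $A_n$ and within at most a couple of adjacent path-segments; since each stretched edge is a path of length $n+1$ and the Vicsek tree has bounded degree, the ball looks like a finite union of $O(1)$ intervals glued at branch points, and $V(x,2R)/V(x,R)$ is bounded by the one-dimensional doubling constant times the branching factor. (3) For $R$ in the range $\tfrac12 D_n\le R\lesssim D_{n+1}$ the ball $B(x,R)$ contains a fixed fraction of $\Gamma_n$ and is contained in $\Gamma_{n+1}$ (or $\Gamma_{n+2}$), so $V(x,R)\simeq \mu(\Gamma_n)\simeq n4^n$ and $V(x,2R)\lesssim \mu(\Gamma_{n+2})\simeq (n+2)4^{n+2}\lesssim n4^n$; doubling holds with an absolute constant because $(n+2)/n$ and $4^2$ are bounded. (4) For large $R$ (many blocks) one iterates: doubling $R$ increases the reached block index $n$ (with $R\asymp n3^n$) by at most a bounded amount, and since $V\asymp n4^n$ along the infinite path, the ratio of volumes stays bounded. (5) Finally, centers at or near the root $z_0$ are the easiest case — there $V(z_0,R)\simeq n4^n$ with $R\asymp n3^n$ directly — and a general center is handled by the volume comparison principle $(\ref{VC})$, i.e. it suffices to check $(VD)$ up to enlarging $R$ by a bounded factor.

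**Main obstacle.** The delicate point is the \emph{transition} between the small-scale (one-dimensional, linear volume) behavior inside a stretched segment and the large-scale (Vicsek, volume $\simeq n4^n$) behavior: one must check that as $R$ grows through the critical scale $D_n$ there is no "jump" in the volume ratio, i.e. that the slowly growing stretching factor $i+1$ (which makes $\Gamma$ non-Ahlfors-regular) does not spoil doubling. The key quantitative input is precisely that the stretch grows only \emph{linearly} in the block index while diameters and volumes grow \emph{geometrically} ($D_i=2\cdot 3^i$ before stretching, $\mu(A_i)\simeq i4^i$); so the extra factor $i$ is negligible against $3^i$ and $4^i$, and $(n+2)/n\to 1$. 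I would isolate this as a short lemma: $V(x,2R)\le C\,V(x,R)$ with $C$ independent of $n$, the proof being the case analysis above, with the only real work being the bookkeeping of which blocks and how many path-segments a ball of radius $2R$ can meet once a ball of radius $R$ is known. Everything else is routine given bounded degree and $(p_0)$.
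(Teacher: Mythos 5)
Your overall skeleton -- classify the ball by the block index $N$ with $x\in A_N$ and by where $R$ falls relative to the block diameters, use $\mu(\Gamma_n)\simeq n4^n$ and $d(z_0,z_n)\simeq n3^n$ as the dictionary, and observe that the linear stretch factor is negligible against the geometric growth -- is the same as the paper's, and your steps (3)--(5) correspond to the paper's Case $k\geq N$ and to its boundary sub-case where $B(x,2R)$ meets $\Gamma_{N-1}$. But step (2) contains a genuine gap. For a centre $x\in A_N$ and $R$ up to half the diameter of $A_N$, it is \emph{not} true that $B(x,R)$ ``stays within at most a couple of adjacent path-segments'' or ``looks like a finite union of $O(1)$ intervals glued at branch points'': that description is valid only up to the scale of a single inserted path, $R\lesssim N+1$. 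As soon as $R$ is large compared with $N+1$ but still small compared with $\mathrm{diam}(A_N)$, the ball contains on the order of $(R/(N+1))^{\log 4/\log 3}$ branch points of the underlying Vicsek structure, which is unbounded, so the ``one-dimensional doubling constant times the branching factor'' bound fails and this whole intermediate range of radii is left uncovered by your argument.

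The missing ingredient is exactly the one the paper supplies for its case $k<N$: inside $A_N\cup A_{N+1}$ every edge of the original tree has been replaced by a path of essentially the same length ($N+1$ or $N+2$), so this piece of $\Gamma$ is a uniformly rescaled copy of a piece of the Vicsek tree, and volume doubling at these intermediate scales is \emph{inherited from the volume doubling of the original Vicsek tree} with constants independent of $N$ (precisely because the rescaling factor is uniform within one block). Your own ``main obstacle'' paragraph correctly identifies the transition between one-dimensional and global Vicsek behaviour as the delicate point, but it mislocates the crossover: the one-dimensional regime ends at scale $N+1$, not at scale $\mathrm{diam}(A_N)$, and between those two scales neither the interval argument nor the $n4^n$ counting applies -- only the transported $(VD)$ of the Vicsek tree does. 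With that lemma inserted, the rest of your case analysis goes through and matches the paper's proof.
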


\begin{proof}
\bigskip Denote $L_{i}=d\left( z_{0},z_{i}\right) ,d_{i}=\frac{1}{2}\left(
L_{i}-L_{i-1}\right) $ and recognize that $L_{n-1}\simeq L_{n}\simeq d_{n}$.
Let us consider a ball $B\left( x,2R\right) $ and an $N>0$ such that $x\in
A_{N}$ \ and $k$:
\begin{equation*}
d_{k-1}\leq R<d_{k}.
\end{equation*}%
First we assume that the ball is large relative to the position \ of the
centre, which means that it captures basically the large scale property of
the graphs.

Case 1. $k\geq N$.

For convenience we introduce a notation. \ Denote $\Omega _{n}$ \ one of the
blocks of $A_{k}$ of diameter $d_{k}$. \ There is a block $\Omega _{k-2}$
which contains $x$. \ It is clear that $\Omega _{k-2}\subset B\left(
x,R\right) $ and\
\begin{equation*}
V\left( x,R\right) \geq \mu \left( \Omega _{k-2}\right) \simeq \mu \left(
\Gamma _{k+1}\right) .
\end{equation*}%
\ On the other hand $R<L_{k}$ which results that $B\left( x,2R\right)
\subset \Gamma _{k+1}$ and from $\mu \left( \Gamma _{k+1}\right) \simeq \mu
\left( \Omega _{k-2}\right) $ $\left( VD\right) $ follows. \

Case 1. $k<N$.

Now we have to separate sub-cases. \ Again let us fix that $x\in \Omega _{N}$%
. Denote $d=d\left( x,z_{N-1}\right) $. If $x$ \ is not in the central block
of $A_{N}$ then, the $B\left( x,R\right) \subset A_{N}\cup A_{N+1}$ and
since these parts of the graph contain only paths of length of $N+1$ \ or $%
N+2$ volume doubling follows from the fact that it \ holds for the original
Vicsek tree. The same applies if $x$ is in the central block but $B\left(
x,2R\right) \subset A_{N}$. Finally if $B\left( x,2R\right) \cap \Gamma
_{N-1}\neq \varnothing $ \ then $R\geq 2d_{N-1}>d_{N-1}$ \ which means by
the definition of $k$ that $k=N-1,B\left( x,R\right) \supset \Omega _{N-1}$\
\ and on the other hand $B\left( x,2R\right) \subset \Gamma _{N+1}$ \ which
again gives $\left( VD\right) $.
\end{proof}

The elliptic Harnack inequality follows as in \cite{GT2} from the fact that
the Green functions are nearly radial. The linear resistance growth implies
that
\begin{equation*}
\rho \left( \left\{ x\right\} ,B^{c}\left( x,2R\right) \right) \simeq \rho
\left( x,R,2R\right) \simeq R.
\end{equation*}%
\ Let us also recall that from $\left( VD\right) ,\left( H\right) $ and the
linear resistance growth it follows that%
\begin{eqnarray}
&&cRV\left( x,2R\right)  \label{er3} \\
&\leq &c\rho \left( x,R,2R\right) V\left( x,2R\right)  \notag \\
&\leq &E\left( x,2R\right) \\
&\leq &\rho \left( \left\{ x\right\} ,B^{c}\left( x,2R\right) \right)
V\left( x,2R\right) \leq CRV\left( x,2R\right) .  \notag
\end{eqnarray}

The conditions $\left( TC\right) $ follows from $\left( VD\right) $ and $%
\left( \ref{er3}\right) $.

Let us remark that the mean value inequality is implied by the Harnack
inequality and consequently the conditions of Theorem \ref{tLDUE} and \ref%
{tLDUE+} are satisfied.

\section{List of the main conditions}

\begin{eqnarray*}
&&%
\begin{array}{lllll}
\text{shortcut} &  & \text{equation} &  & \text{name} \\
\left( p_{0}\right)  &  & \left( \ref{p0}\right)  &  & \text{controlled
weights condition} \\
\left( VD\right)  &  & \left( \ref{vdef}\right)  &  & \text{volume doubling
property} \\
\left( TC\right)  &  & \left( \ref{TC}\right)  &  & \text{time comparison
principle} \\
\left( TD\right)  &  & \left( \ref{TD}\right)  &  & \text{time doubling
property} \\
\left( MV\right)  &  & \left( \ref{MV}\right)  &  & \text{mean value
inequality} \\
\left( DUE\right)  &  & \left( \ref{LDLE}\right)  &  & \text{diagonal upper
estimate} \\
\left( UE\right)  &  & \left( \ref{UE1}\right)  &  & \text{upper estimate}
\\
\left( E\right)  &  & \left( \ref{E}\right)  &  & \text{uniform mean exit
time } \\
\left( H\right)  &  & \left( \ref{H}\right)  &  & \text{elliptic Harnack
inequality}%
\end{array}
\\
&&%
\begin{array}{lllll}
\left( UE_{F}\right) \text{ \ } &  & \left( \ref{UEF}\right) \text{ \ \ } &
& \text{upper estimate w.r.t. }F\text{ \ \ } \\
\left( LE_{F}\right)  &  & \left( \ref{LEF}\right)  &  & \text{lower
estimate w.r.t. }F%
\end{array}%
\end{eqnarray*}

\end{document}